\newcommand{\real}{{\mathbb R}}
\newcommand{\vecw}{{\boldsymbol w}}
\def\Hom{\mathop{\rm Hom}\nolimits}
\def\Re{\mathop{\rm Re}\nolimits}
\def\Sym{\mathop{\rm Sym}\nolimits}
\def\tr{\mathop{\rm tr}\nolimits}
\def\diag{\mathop{\rm diag}\nolimits}
\newtheorem{df}{Definition}[section]
\newtheorem{thm}[df]{Theorem}
\newtheorem{prop}[df]{Proposition}
\newtheorem{rem}[df]{Remark}
\newtheorem{lem}[df]{Lemma}
\newtheorem{cor}[df]{Corollary}
\newcommand{\ti}{\tilde}
\newcommand{\mc}{\mathcal}
\newcommand{\mb}{\mathbb}
\newcommand{\eq}{eqnarray*}
\title{Bounded differentials on unit disk and the associated geometry}
\author{Song Dai\textsuperscript{1}}
\address{Song Dai\\
Center for Applied Mathematics\\
Tianjin University\\
No.92 Weijinlu Nankai District\\
Tianjin\\
P.R.China 300072}
\email{song.dai@tju.edu.cn}
\author{Qiongling Li\textsuperscript{2}}
\address{Qiongling Li\\
Chern Institute of Mathematics and LPMC\\
Nankai University\\
No. 94 Weijinlu Nankai District\\
Tianjin\\
P.R.China 300071}
\email{qiongling.li@nankai.edu.cn}
\date{}
\begin{document}
\maketitle
\begin{abstract}
For a harmonic diffeomorphism between the Poincar\'{e} disks, Wan showed the equivalence between the boundedness of the Hopf differential and the quasi-conformality.
In this paper, we will generalize this result from quadratic differentials to $r$-differetials. We study the relationship between bounded holomorphic $r$-differentials and the induced curvature of the associated harmonic maps from the unit disk to the symmetric space $SL(r,\mathbb R)/SO(r)$ arising from cyclic/subcyclic harmonic Higgs bundles. Also, we show the equivalences between the boundedness of holomorphic differentials and having a negative upper bound of the induced curvature on hyperbolic affine spheres in $\mathbb{R}^3$, maximal surfaces in $\mathbb{H}^{2,n}$ and $J$-holomorphic curves in $\mathbb{H}^{4,2}$ respectively. Benoist-Hulin and Labourie-Toulisse have previously obtained some of these equivalences using different methods.


\end{abstract}

\tableofcontents

\section{Introduction}
Consider a harmonic diffeomorphism between the unit disk $\mathbb D$ equipped with the hyperbolic metric $g_{\mathbb D}$, by the work of Wan \cite{WAN}, the followings are equivalent: (i) the harmonic map is quasi-conformal; (ii) the energy density is bounded; (iii) its Hopf differential is bounded with respect to $g_{\mathbb D}$.

Wan's result on holomorphic quadratic differentials can be generalized to holomorphic cubic differentials by Benoist and Hulin \cite{BenoistHulin}. There they show that for a hyperbolic affine sphere in $\mathbb R^3$ whose affine metric has conformal type as $\mathbb D$, the followings are equivalent: (i) its affine metric has curvature bounded above by a negative constant; (ii) the affine metric is conformally bounded with respect to $g_{\mathbb D}$; (iii) its Pick differential is bounded with respect to $g_{\mathbb D}$.

The associated equations in these two settings belong to a type of single vortex equation and can be dealt simultaneously. The single vortex equation can be fit into a more general story: the nonabelian Hodge correspondence for Higgs bundles. 

Recall that a Higgs bundle on $\mathbb D$ is a pair consisting of a holomorphic vector bundle $E$ over $\mathbb D$ and a Higgs field as a $End(E)$-valued holomorphic 1-form.
We call a Higgs bundle $(E,\phi)$ has bounded spectral curve if the holomorphic differentials $\tr(\phi), \tr(\phi^{\otimes 2}),\cdots, \tr(\phi^{\otimes r})$ are bounded with respect to the hyperbolic metric $g_{\mathbb D}$. A Hermitian metric $h$ on $E$ is called harmonic if it satisfies the Hitchin equation. A harmonic metric $h$ induces a harmonic map from the unit disk to $GL(r,\mathbb C)/U(r)$ whose energy density is $|\phi|^2_{h,g_{\mathbb D}}g_{\mb{D}}$.

Similar to the compact surface case, one can define the Hitchin section consisting of Higgs bundles over $\mathbb D$ parametrized by a tuple of differentials $q_2, \cdots, q_r$. Among such Higgs bundles, there is a special section called the Hitchin section. And the image of $s(0,\cdots, 0, q_r)/s(0,\cdots, 0, q_{r-1},0)$ are called cyclic/subcyclic Higgs bundles in the Hitchin section. In particular, the two cases discussed at the beginning belong to the case of cyclic Higgs bundles in the Hitchin section for $r=2,3$. For a Higgs bundle over $\mathbb D$, the second author and Mochizuki in \cite{LiMochizuki} showed that a Higgs bundle has bounded spectral curve if and only if the energy density is bounded. This can be viewed as a generalization of rank two and three cyclic Higgs bundles on the equivalence between (ii) and (iii).

Similar to the case for quadratic differentials and cubic differentials, we want to ask if the following is true: for a Higgs bundle in the Hitchin section, by choosing an appropriate harmonic metric,  it has bounded spectral curve if and only if the induced curvature of the harmonic map is bounded above by a negative constant. Note that this question restricts to the negative curvature conjecture for compact hyperbolic surface case in \cite{DaiLi16}. We will approach this question for the case of cyclic and subcyclic Higgs bundles in the Hitchin section.

For a general $r\geq 2$, given a holomorphic $r$-differential $q$ on $\mathbb D$, the second author and Mochizuki \cite{LiMochizuki} showed there is a unique complete real solution on the cyclic rank $r$ Higgs bundle $s(0,\cdots, 0, q)$. Later, using similar techniques, given a holomorphic $(r-1)$-differential $q$ on $\mathbb D$, Sagman in \cite{Sagman} showed there is a unique complete solution on the subcyclic rank $r$ Higgs bundle $s(0,\cdots, 0, q,0)$. Therefore, one can associate to a holomorphic $r$-differential or a holomorphic $(r-1)$-differential a harmonic immersion $f$ from $\mathbb D$ to the symmetric space $N:=SL(r,\mathbb R)/SO(r)$ arising from the complete solution. For both cases, the sectional curvature $K_{\sigma}^N$ of the tangent plane $\sigma$ of $f$ satisfies $K_{\sigma}^N<0$ for $r\geq 3$. By the Gauss equation, the induced curvature is negative.

We would like to investigate the relation between bounded $r$-differentials or $(r-1)$-differentials with the geometry of harmonic maps for general $r\geq 3$. Our main result is the following theorem.
\begin{thm}[part of Theorem \ref{cyclicbound}]\label{Intro:cyclicbound}
Suppose $f: \mathbb D\rightarrow N=SL(r,\mathbb R)/SO(r)$  is the harmonic map induced by a holomorphic $r$-differential $q$ or a holomorphic $(r-1)$-differential $q$ arising from the complete solution for $r\geq 3$. Then the followings are equivalent:
\begin{enumerate}
\item $q$ is bounded with respect to $g_{\mathbb D}$;
\item the conformal harmonic map $f:\mathbb D\rightarrow \Sigma\subset N$ is biLipschitz;
\item there is a constant $\delta>0$ such that $K_{\sigma}^N\leq -\delta$ for every tangent plane $\sigma$ of $\Sigma$;
\item the induced curvature on $\Sigma$ is bounded from above by a negative constant.
\end{enumerate}
\end{thm}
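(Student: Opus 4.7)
The plan is to exploit the explicit structure of the Hitchin equation for cyclic (resp.\ subcyclic) Higgs bundles in the Hitchin section, where it reduces to a Toda-type system of elliptic PDEs for the diagonal harmonic metric. Fixing a conformal coordinate $z$ on $\mathbb D$ with $g_{\mathbb D} = 2 e^{2 u_0}|dz|^2$ and writing $h = \diag(e^{2u_1},\ldots,e^{2u_r})$ with $\sum u_i = 0$, one obtains explicit expressions: the pullback metric is $f^{\ast} g_N = 2\bigl(|q|^2 e^{2(u_1 - u_r)} + \sum_{i=1}^{r-1} e^{2(u_{i+1}-u_i)}\bigr)|dz|^2$ in the cyclic case, with an analogous (asymmetric) formula in the subcyclic case; the intrinsic curvature satisfies $K(g_f) = -e^{-2v}\Delta v$ where $v = \frac{1}{2}\log\bigl(|q|^2 e^{2(u_1-u_r)} + \sum e^{2(u_{i+1}-u_i)}\bigr)$, which through the Toda equations for $u_i$ rewrites as an explicit expression in $|q|^2$ and the differences $u_{i+1}-u_i$, $u_1-u_r$; and the sectional curvature $K_\sigma^N$ along $\Sigma$ is given by the standard symmetric-space formula involving brackets of $\phi + \phi^{*_h}$ in $\mathfrak{sl}_r$, which becomes an explicit polynomial in the same quantities.

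I would prove the equivalences via the implications $(3)\Rightarrow(4)$, $(2)\Rightarrow(1)$, $(1)\Rightarrow(3)$, $(1)\Rightarrow(2)$, and $(4)\Rightarrow(1)$. The first is immediate from the Gauss equation and the minimality of $f$ (standard for Hitchin-section cyclic/subcyclic harmonic maps since $\tr(\phi^2)=0$), giving $K(g_f) = K_\sigma^N - \frac{1}{2}\|II\|^2 \leq K_\sigma^N$. For $(2)\Rightarrow(1)$, the term $|q|^2 e^{2(u_1 - u_r) - 2 u_0}$ appears as one of the positive summands in the conformal factor $f^{\ast} g_N / g_{\mathbb D}$, so its boundedness is forced by (2); combined with the a priori bound on $u_1 - u_r + (r-1) u_0$ coming from the Toda system (to be established together with $(1)\Rightarrow(3)$), this yields boundedness of $|q|$ with respect to $g_{\mathbb D}$.

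For $(1)\Rightarrow(3)$, I would apply a maximum-principle/Ahlfors--Schwarz argument to the Toda-type Hitchin equations: boundedness of $|q|$ with respect to $g_{\mathbb D}$ propagates, through the coupled PDEs, to uniform two-sided control of each difference $u_{i+1} - u_i$ relative to $u_0$, with the explicit Fuchsian solution ($q \equiv 0$, $u_{i+1}-u_i = u_0 + \mathrm{const}$) as a comparison model. Inserting this control into the explicit expression for $K_\sigma^N$ yields the desired uniform negative upper bound. The implication $(1)\Rightarrow(2)$ then follows as a byproduct: the two-sided control on $u_{i+1} - u_i$ relative to $u_0$ gives both an upper and a lower bound on the conformal factor of $f^{\ast} g_N$ relative to $g_{\mathbb D}$, because each summand $e^{2(u_{i+1}-u_i) - 2u_0}$ is pinched between two positive constants.

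The principal obstacle, and the most technical step, is the implication $(4)\Rightarrow(1)$: one must extract a pointwise bound on $|q|_{g_{\mathbb D}}$ from the single upper bound $K(g_f) \leq -\delta$. The strategy is to combine the differential inequality $\Delta v \geq 2\delta\, e^{2v}$ encoded by $K(g_f) \leq -\delta$ with the Toda-type Hitchin PDEs for the $u_i$, and to derive, via a comparison/barrier argument in the spirit of Wan and Benoist--Hulin but adapted to the higher-rank coupled system, a pointwise upper bound on the summand $|q|^2 e^{2(u_1-u_r) - 2u_0}$ of the conformal factor. The subcyclic case moreover lacks the $\mathbb Z/r$ cyclic symmetry that streamlines the rank-$r$ Toda system, and is expected to require extensions of Sagman's analysis of the complete subcyclic solution.
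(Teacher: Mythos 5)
Your overall architecture of implications is workable, but there is a genuine gap at the step $(1)\Rightarrow(3)$, and it is precisely the step where the paper's key technical input lives. You claim that boundedness of $|q|_{g_{\mathbb D}}$ gives, via a maximum-principle/Ahlfors--Schwarz argument, two-sided control of the differences $u_{i+1}-u_i$ relative to $u_0$, and that ``inserting this control into the explicit expression for $K_\sigma^N$ yields the desired uniform negative upper bound.'' It does not. With $f_i=e^{-w_i+w_{i+1}}$ the curvature formula reads
\begin{equation*}
K_\sigma^N=-\frac{1}{2r}\,\frac{\sum_{i=1}^{r}(f_{i-1}-f_i)^2}{\bigl(\sum_{i=1}^{r}f_i\bigr)^2},
\end{equation*}
and pinching every $f_i$ between two positive constants bounds the denominator but does nothing to keep the numerator away from zero: all the $f_i$ could become nearly equal along a sequence of points, forcing $K_\sigma^N\to 0$ while remaining strictly negative. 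The maximum principle only yields the strict pointwise inequalities $f_{i-1}/f_i<1$ (as in Theorem \ref{LM1}), never a uniform gap $f_{i-1}/f_i\le 1-\delta$. The paper closes exactly this hole with Lemma \ref{Wan}: for $u_i=1-f_{i-1}/f_i$ one only gets a supersolution inequality $\triangle u_i\le c\,u_i$, for which the maximum principle is useless for a positive lower bound; instead one combines a Gauss--Bonnet argument (giving $\int_{B}(-K)\,dV\ge 1$ on balls of uniformly bounded radius, hence an $L^1$ lower bound on $u_i$) with the Moser-type mean-value inequality of Proposition \ref{5} to convert this into a pointwise lower bound $u_i\ge\delta$. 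The same mechanism (via the Bochner-type inequality $\triangle_{g(h)_n}K_{g(h)_n}\ge c_1K_{g(h)_n}(K_{g(h)_n}+c_2)$ of Lemma \ref{Kn-1} and Lemma \ref{ModelEquationEstimate}) is what produces the uniform negative bound on the curvatures. Without this Harnack-type ingredient your argument cannot produce the constant $\delta$ in $(3)$.

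A secondary remark: you have the difficulty inverted at $(4)\Rightarrow(1)$, which you call the principal obstacle. In fact this is the easy direction: since $f$ is conformal, $g_f=e^{v}g_{\mathbb D}$ with $K_{g_f}\le-\delta$ and $K_{g_{\mathbb D}}=-1$, so the Cheng--Yau maximum principle (Lemma \ref{KC}) immediately gives $e^{v}\le 1/\delta$, i.e.\ the energy density is bounded; boundedness of $q$ then follows from the Li--Mochizuki equivalence (Theorem \ref{LM2}) or from the telescoping product $|q|^2_{g_{\mathbb D}}=\bigl(|q|^2 e^{2w_1}\bigr)\prod_i\bigl(e^{-w_i+w_{i+1}}\bigr)^2\cdots$. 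No new barrier construction for the coupled system is needed there. Your remaining implications --- $(3)\Rightarrow(4)$ via the Gauss equation and minimality, and $(1)\Leftrightarrow(2)$ via the two-sided bounds on the $f_i$ (the lower bound coming for free from completeness as in Lemma \ref{LowerBound}) --- agree with the paper and are fine.
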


The equivalence between (1) and (2) is already shown in Li-Mochizuki \cite{LiMochizuki}.

\begin{rem}
It is clear that the result still holds if one replaces the unit disk $\mathbb D$ with any hyperbolic Riemann surface.
\end{rem}

We also have a version of Theorem \ref{Intro:cyclicbound} dealing with the equation for the case $r=2,3$: Proposition \ref{n=1 bd}.
When $r=2$, Proposition \ref{n=1 bd} reduces to Wan's result on harmonic diffeomorphisms between $\mathbb D$.

In fact, the main technique in our proof of Proposition \ref{n=1 bd} is adapted from Wan's proof. Wan's result is also generalized to harmonic diffeomorphisms between pinched Hadamard spaces by \cite{LiTW}.  Based on Wan's proof, we introduce the key Lemma \ref{Wan} and \ref{KEYLEMMA}, which will be used frequently in the paper. As a direct corollary of Proposition \ref{n=1 bd} for $r=3$ case, we reprove the theorem of Benoist-Hulin \cite{BenoistHulin} on hyperbolic affine spheres in $\mathbb R^3$, showing the affine metric has curvature bounded above by a negative constant if and only if its Pick differential is bounded with respect to the hyperbolic metric, using a different proof.
The details will be discussed in Section \ref{r=2} and \ref{r=3}.

Holomorphic quartic differentials and sextic differentials appear in the structure data of immersed surfaces in pseudo-hyperbolic spaces. We prove similar results as the case of quardratic and cubic differentials.

For a space-like maximal surface in $\mathbb H^{2,n}$, one can associate a holomorphic quartic differential. By \cite{CTT}, such a maximal surface corresponds to a conformal $SO_0(2,n+1)$-Higgs bundle over a domain in $\mathbb C$ together with a real harmonic metric compatible with the group structure. A maximal surface is called complete if its induced metric is complete. In Section \ref{MaximalSurface}, we prove the following theorem, which is already shown in Labourie-Toulisse \cite{LT} except (i)$\Rightarrow$(ii).

\begin{thm}
Let $X$ be a complete maximal surface in $\mathbb H^{2,n}$. Then  its induced curvature is either negative or constantly zero. In the latter case, $X$ is conformal to the complex plane.

Moreover, suppose $X$ is conformal to $\mathbb D$, the followings are equivalent:\\
(i) the quartic differential is bounded with respect to the hyperbolic metric;\\
(ii) the induced metric is conformally bounded with respect to the hyperbolic metric;\\
(iii) the induced metric has curvature bounded above by a negative constant.
\end{thm}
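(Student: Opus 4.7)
The plan is to extract the single-vortex PDE that governs the induced metric of a maximal surface in $\mathbb{H}^{2,n}$ from the $SO_0(2,n+1)$-Higgs bundle description of \cite{CTT}, and then invoke Proposition \ref{n=1 bd} together with the key Lemmas \ref{Wan} and \ref{KEYLEMMA} of the paper. In a local holomorphic coordinate $z$, write the induced metric as $g = e^{2u}|dz|^2$ and the associated quartic differential as $q = q(z)(dz)^4$. The maximality condition together with the Gauss--Codazzi equations translates into a single-vortex PDE of the form
$$
\Delta u \;=\; c_1\, e^{2u} \,-\, c_2\, |q(z)|^2 e^{-6u},
$$
with positive constants $c_1,c_2$, and the Gauss curvature of $g$ reads $K_g = -c_1 + c_2 |q|^2_g$. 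Thus the problem sits inside the framework of Proposition \ref{n=1 bd}.

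For the dichotomy, I would differentiate the vortex PDE to obtain an elliptic equation on $|q|^2_g$ and apply an Omori--Yau-type maximum principle (available since $g$ is complete and the equation itself yields a lower curvature bound). This forces $|q|^2_g \le c_1/c_2$ pointwise, hence $K_g \le 0$; unique continuation applied to the underlying elliptic PDE then yields either $K_g < 0$ throughout $X$, or $K_g \equiv 0$. In the latter case completeness of $g$ gives universal cover $\mathbb{C}$, and to conclude $X \cong \mathbb{C}$ one rules out torus and cylinder quotients by noting that $|q|^2_g \equiv c_1/c_2 > 0$ forces $q$ to be nowhere vanishing, which combined with the global structure of the $SO_0(2,n+1)$-Higgs bundle (in particular the compatibility between $q$ and the holonomy) excludes a nontrivial quotient.

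For the equivalences when $X$ is conformal to $\mathbb{D}$, the implications (ii) $\Leftrightarrow$ (iii) and (iii) $\Rightarrow$ (i) are due to Labourie--Toulisse \cite{LT}: the first is immediate from the Gauss identity $K_g = -c_1 + c_2 |q|^2_g$ together with the conformal comparison with $g_{\mathbb{D}}$, while the second follows from comparing $u$ with the hyperbolic conformal factor $u_{\mathbb{D}}$ via the maximum principle applied to the vortex PDE. The new implication is (i) $\Rightarrow$ (ii). Setting $v = u - u_{\mathbb{D}}$ and subtracting the PDE $\Delta u_{\mathbb{D}} = c_1 e^{2u_{\mathbb{D}}}$ from the vortex PDE for $u$ produces a semilinear equation for $v$ whose nonlinearity in $v$ is monotone in the appropriate direction and whose source is governed by $|q|^2_{g_{\mathbb{D}}}$. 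Hypothesis (i) bounds this source uniformly, so Lemma \ref{KEYLEMMA} (or the sub/supersolution argument of Lemma \ref{Wan}) applies and yields $|v| \le C$, which is exactly the desired conformal boundedness.

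The main obstacle I expect is this last implication (i) $\Rightarrow$ (ii): one must carefully verify the structural hypotheses of Lemma \ref{KEYLEMMA} for the difference equation satisfied by $v$, in particular the correct sign and monotonicity of the nonlinear term and the compatibility of the $|q|^2_{g_{\mathbb{D}}}$ bound with the hyperbolic conformal factor $e^{2u_{\mathbb{D}}}$ near the boundary of $\mathbb{D}$. A secondary technical point is the flat case of the dichotomy, where ruling out torus and cylinder quotients of $\mathbb{C}$ requires a separate global argument beyond the vortex PDE, drawing on the holonomy of the associated Higgs bundle.
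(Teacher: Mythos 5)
Your proposal rests on the claim that the induced metric of a maximal surface in $\mathbb{H}^{2,n}$ satisfies a single vortex equation $\Delta u = c_1 e^{2u} - c_2|q|^2e^{-6u}$ with Gauss identity $K_g=-c_1+c_2|q|^2_g$, so that Proposition \ref{n=1 bd} applies. This is false for $n\geq 2$: the second fundamental form (equivalently $\beta$) takes values in a rank-$n$ normal bundle, the Gauss equation reads $k=-1+|\beta|_h^2$, and the quartic differential is $q_4=q_{\mathcal V_0}(\beta,\beta)$; these are related only by the Cauchy--Schwarz inequality $|q_4|_h\leq|\beta|_h^2$, with no reverse pointwise bound. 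Hence $K_g$ is not a function of $q_4$ and the conformal factor alone, and the problem does not reduce to the scalar framework of Proposition \ref{n=1 bd}. This defect propagates: the dichotomy argument (Omori--Yau applied to $|q|^2_g$) and the sketch of (iii)$\Rightarrow$(i) both lean on the false identity, and, most seriously, your proposed proof of the key new implication (i)$\Rightarrow$(ii) fails because the source term in the difference equation for $v=u-u_{\mathbb D}$ is governed by $|\beta|_h^2$, which hypothesis (i) does not control pointwise.

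The paper avoids writing any scalar vortex equation. It applies the Simpson-type estimate (Lemma \ref{SimpsonEstimate}) to two explicit holomorphic sections of $\End(E)$ for the $SO_0(2,n+1)$-Higgs bundle to derive $k\geq -1$ and the differential inequality $\triangle_g k\geq k(k+1)$ (Proposition \ref{CurvatureBound}); the dichotomy and the equivalence (ii)$\Leftrightarrow$(iii) then follow from Lemma \ref{KEYLEMMA} and Lemma \ref{KC}, and (iii)$\Rightarrow$(i) uses only the one-sided bound $|q_4|_h\leq|\beta|_h^2\leq 1$. For (i)$\Rightarrow$(ii) it invokes \cite[Proposition 3.12]{LiMochizuki}: boundedness of the spectral data $\tr(\phi^{4j})=4q_4^j$ forces $|\phi|_{h,g_{\mathbb D}}$ to be bounded, and since $|\phi|^2_{h,g_{\mathbb D}}$ dominates $2|1|^2_{h,g_{\mathbb D}}$ this yields $g\leq Cg_{\mathbb D}$, with the lower bound $g\geq g_{\mathbb D}$ coming from $k\geq-1$ and Lemma \ref{KC}. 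You would need to import some substitute for that spectral-data input; the difference-of-conformal-factors comparison you sketch cannot supply it. (Your worry about torus and cylinder quotients in the flat case is moot: the surface is simply connected in this setting, so completeness and flatness already give $\mathbb{C}$.)
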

\begin{rem}
Our technique is different from the one in \cite{LT}. In fact, the method in \cite{LT} is in the similar spirit of the one in \cite{BenoistHulin} for affine spheres and cubic differentials.
\end{rem}

For a space-like $J$-holomorphic curve in $\mathbb H^{4,2}$ such that its second fundamental form never vanishes and has timelike osculation line, one can associate a holomorphic sextic differential $q_6$. By \cite{NieXin}, such a $J$-holomorphic curve corresponds to a subcyclic Higgs bundle $s(0,\cdots, 0,q_6, 0)$ together with a harmonic metric $diag(h_1,h_2,h_3,1,h_3^{-1},h_2^{-1},h_1^{-1})$ satisfying $h_1=2h_2h_3.$ In Section \ref{G2}, we prove the following theorem.
\begin{thm}
Let $X$ be a complete space-like $J$-holomorphic curve in $\mathbb H^{4,2}$ such that its second fundamental form never vanishes and has timelike osculation line. Then its induced curvature is either negative or constantly zero. In the latter case, $X$ is conformal to the complex plane.

Moreover, suppose $X$ is conformal to $\mathbb D$, the followings are equivalent:\\
(i) the sextic differential is bounded with respect to the hyperbolic metric;\\
(ii) the induced metric is conformally bounded with respect to the hyperbolic metric;\\
(iii) the induced metric has curvature bounded above by a negative constant.
\end{thm}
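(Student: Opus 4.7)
The plan is to follow the same strategy used for cyclic and subcyclic Higgs bundles in Theorem~\ref{cyclicbound} and Proposition~\ref{n=1 bd}, specialized to the $G_2$-subcyclic setting of rank seven with diagonal harmonic metric $\diag(h_1,h_2,h_3,1,h_3^{-1},h_2^{-1},h_1^{-1})$ subject to the constraint $h_1=2h_2h_3$. Writing $u_i=\log h_i$, I would first make the Hitchin system explicit, then compute the pullback metric $g^{\sharp}$ on $X$ and its Gaussian curvature in terms of $u_2,u_3$ (with $u_1$ determined by the constraint) and $|q_6|^2$. The key structural feature, exactly as in the rank-$r$ subcyclic case, is that $\log(g^{\sharp}/g_{\mathbb D})$ and the curvature of $g^{\sharp}$ satisfy Bochner-type inequalities driven by a positive combination of $|q_6|_{g_{\mathbb D}}^2$ and exponentials $e^{2(u_i-u_j)}$.

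For the first (non-conditional) statement, assume $X$ is complete. I would argue by a maximum-principle dichotomy: either the induced curvature is strictly negative everywhere, or it attains $0$ at some point. In the latter case, the Bochner-type equation for the curvature, together with the strong maximum principle on a complete Riemann surface, forces the curvature to be identically zero. Tracing this flatness back through the Hitchin system then forces $q_6\equiv 0$ and all $u_i$ to be constants, so $g^{\sharp}$ is a flat complete conformal metric on $X$. Uniformization, together with exclusion of the torus case (by the same argument used for the $\mathbb H^{2,n}$ analogue in Section~\ref{MaximalSurface}), then yields that $X$ is conformal to $\mathbb C$.

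For the equivalences under $X\simeq\mathbb D$: the implication (iii)$\Rightarrow$(ii) follows from the Ahlfors--Schwarz lemma for the upper bound of $g^{\sharp}/g_{\mathbb D}$, together with a lower bound coming from completeness and the subharmonicity of $\log(g_{\mathbb D}/g^{\sharp})$. The implication (ii)$\Rightarrow$(i) follows by inserting conformal bounds on the $e^{u_i}$ into the Hitchin equations and reading off a pointwise bound on $|q_6|_{g_{\mathbb D}}^2$. The crucial direction (i)$\Rightarrow$(iii) is obtained by applying Lemma~\ref{Wan} and Lemma~\ref{KEYLEMMA} to carefully chosen combinations of $u_2,u_3$ (keeping $u_1=\log 2+u_2+u_3$ in mind): the boundedness of $q_6$ feeds into these lemmas through the source term and yields two-sided a priori bounds on $u_2,u_3$, hence a strictly negative upper bound on the induced curvature.

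The main obstacle is the (i)$\Rightarrow$(iii) step. The $G_2$ constraint $h_1=2h_2h_3$ couples the three diagonal entries of the harmonic metric, so the three Hitchin equations cannot be decoupled as cleanly as in the generic $\SL(r,\mathbb R)$ subcyclic case of Theorem~\ref{cyclicbound}. Identifying linear combinations of $u_2,u_3$ on which the Wan-type maximum principle arguments can be applied while respecting the $G_2$ constraint, and verifying the sign conditions on the source terms needed to invoke Lemma~\ref{Wan} and Lemma~\ref{KEYLEMMA} directly, is the technical heart of the argument.
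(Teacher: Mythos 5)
Your overall strategy --- translate to the rank-seven subcyclic Toda system with the constraint $h_1=2h_2h_3$, derive a Bochner-type inequality, and run the Cheng--Yau/strong maximum principle for the dichotomy and Lemma \ref{Wan}/Lemma \ref{KEYLEMMA} for the quantitative negativity --- is the paper's strategy. But the step you flag as ``the technical heart'' and leave open is exactly where the paper's proof lives, and your assessment of the difficulty there is inverted. The constraint $h_1=2h_2h_3$ does not complicate the system; it collapses it. Writing $f_0=e^{w_1+w_2}|q|^2$, $f_1=e^{-w_1+w_2}$, $f_2=e^{-w_2+w_3}$, $f_3=e^{-w_3}$, the constraint forces $f_3=2f_1=1$, so $f_1$ and $f_3$ are \emph{constants}, the induced curvature is simply $k=f_2-1\geq -1$, and the whole Toda system reduces to a single scalar inequality in $k$ itself. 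Combining the equation for $\log f_2$ with the bound $f_0\leq f_1=\tfrac12$ (Lemma \ref{easycase}, which requires completeness of the induced metric --- an ingredient absent from your outline) gives $\triangle_g k\geq 3k(k+1)$, which is Lemma \ref{EquationG2}. Everything else is then a direct application of Lemma \ref{KEYLEMMA} to this one inequality; no ``carefully chosen combinations of $u_2,u_3$'' are needed.

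Second, the logical chain you propose for (i)$\Rightarrow$(iii) --- boundedness of $q_6$ yields two-sided a priori bounds on $u_2,u_3$, ``hence a strictly negative upper bound on the induced curvature'' --- is a non sequitur. Since $k=e^{-w_2+w_3}-1$, two-sided bounds on $w_2,w_3$ only give $k$ bounded; they do not prevent $k$ from approaching $0$. The strict bound $k\leq-\delta$ must come from applying the Wan-type mean-value argument to the curvature inequality itself, which is what Lemma \ref{KEYLEMMA} does once the conformal upper bound $g\leq Cg_{\mathbb D}$ is supplied by Theorem \ref{UpperBound} from the boundedness of $q_6$. Your (iii)$\Rightarrow$(ii) and (ii)$\Rightarrow$(i) directions are essentially the paper's (Lemma \ref{KC} plus the Li--Mochizuki equivalence), and your maximum-principle dichotomy for the first statement is correct in outline, so the proposal is salvageable --- but as written it omits the computation that makes the theorem work and misidentifies where the real content is.
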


For Toda equations and related geometry on complex plane or more general Riemann surfaces, there are plenty of studies on the solution and corresponding geometry. For harmonic maps between surfaces, see e.g \cite{SchoenYau,TeichOfHarmonic,Parabolic,HAN,HTTW, Gupta}. For hyperbolic affine spheres in $\mathbb R^3$,  see e.g \cite{Calabi,Loftin0, Labourie,BenoistHulinFiniteVolume, BenoistHulin, DumasWolf, nie2019poles}.
For maximal surfaces in $\mathbb H^{2,n}$, see \cite{CTT,TamburelliWolf, labourie2020plateau}. For $J$-holomorphic curves in $\mathbb H^{4,2}$ (or equivalently $S^{2,4}$), see \cite{g2Geometry,Parker}. For cyclic Higgs bundles of general rank, see e.g \cite{cyclichiggsaffinetoda,GuestLin, GuestItsLin,MochizukiToda1,MochizukiToda2,DaiLi16,DaiLiCyclic,LiMochizuki1}.

Recall that the universal Teichm\"uller space $\mathcal T(\mathbb D)$ is the space of quasisymmetric homeomorphisms of $S^1$ fixing three points. Combining Wan's result with the work of \cite{LT, Markovic}, there is a bijection between the space of bounded quadratic differentials with the universal Teichm\"uller space. A recent work \cite{LT} by Labourie and Toulisse constructs an analogue of the universal Teichm\"uller space as a subspace of the space of maximal surfaces in $\mathbb H^{2,n}$ which relates with the space of bounded quartic differentials: Let $\mathcal{QS}_n$ be the space of quasisymmetric maps from $\mathbb P(V)$ to $\partial_{\infty}\mathbb H^{2,n}$ equipped with the $C^0$ topology up to the action of $SO(2,n+1)$. The space $\mathcal{QS}_n$ should be viewed as a higher rank analogue of the universal Teichm\"uller space $\mathcal T(\mathbb D)$. Denote by $H_b^0(\mathbb D, K^4)$ the space of bounded quartic differentials on $\mathbb D$. One can naturally construct a map $\mathcal H$ from $\mathcal{QS}_n$ to the product space $\mathcal T(\mathbb D)\times H_b^0(\mathbb D, K^4)$. In the end of Section \ref{MaximalSurface}, we give a short proof of the properness of this map $\mathcal H$ which was asked by Labourie and Toulisse.

\subsection*{Organization of this paper}
In Section \ref{Tools}, we collect some useful tools, especially Lemma \ref{Wan} which is the key tool. We then introduce preliminaries on Higgs bundles and Toda equation in Section \ref{pre}. In Section \ref{General_r}, we prove the main theorem \ref{cyclicbound}. In Section \ref{r=2}, we study the application of Proposition \ref{n=1 bd} to the harmonic map between surfaces and bounded quadratic differentials. In Section \ref{r=3}, we study the application of Proposition \ref{n=1 bd} to the hyperbolic affine sphere and bounded cubic differentials. In Section \ref{MaximalSurface}, we study maximal surfaces in $\mathbb H^{2,n}$ and bounded quartic differentials. We also discuss the analogous universal Teichm\"uller space in Section \ref{MaximalSurface}. In Section \ref{G2}, we study $J$-holomorphic curves in $\mathbb H^{4,2}$ and bounded sextic differentials.

\subsection*{Acknowledgements}
We want to thank Francois Labourie and Jeremy Toulisse for sending us their preprint and helpful discussion. The second author thanks Xin Nie for helpful explanations on the associated $G_2$-geometry.
The first author is supported by NSF of China (No.11871283, No.11971244 and No.12071338). The second author is supported by the Fundamental Research Funds for the Central Universities and Nankai Zhide foundation.

\section{Main tools}\label{Tools}
In this section, we introduce several useful lemmas.
We first recall the following mean-value inequality for proving the key Lemma \ref{Wan}.
\begin{prop}(Mean-value inequality, \cite[Lemma 2.5]{ChoiTreibergs})\label{5}
Let $(M,g)$ be a complete Riemannian manifold of dimension $n$. Let $c\geq 0$, $R_0>0$, $x_0\in M$. Suppose the Poincar\'e and Sobolev inequalities hold with constant $c_p$ and $c_s$ for functions supported in $B_g(x_0,R_0)$. Suppose $\text{Vol}(B_g(x_0,r))\leq c_2 r^n$, $\forall r\leq R_0$ for some constant $c_2>0$. Then there are constants $p_0=p_0(n,c,R_0,c_p,c_s,c_2)>0$ and $C=C(n,c,R_0,c_p,c_s,c_2)>0$ such that given any nonnegative $W^{1,2}$ supersolution $u$ satisfying $\triangle_{g}u\leq c u$ in $B_{g}(x_0,R_0)$ and $p\in (0,p_0)$, there holds
$$\inf\limits_{x\in B_g(x_0,R_0/4)}u(x)\geq C(\int_{B_g(x_0,R_0/2)}u^pdV_g)^{\frac{1}{p}}.$$
In particular, there are constants $C>0$, $0<p<1$ such that
$$u(x_0)\geq C(\int_{B_g(x_0,R_0/2)}u^{p}dV_g)^{\frac{1}{p}}.$$
\end{prop}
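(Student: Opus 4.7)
The plan is to follow the classical De Giorgi--Nash--Moser strategy for weak Harnack inequalities, adapted to the Riemannian setting where the Poincar\'e and Sobolev inequalities are taken as hypotheses rather than derived. The assertion is effectively a weak Harnack inequality for nonnegative supersolutions of $\Delta_g u - cu \leq 0$: the infimum of $u$ on a smaller ball is controlled from below by a small-exponent integral norm on a larger ball, and the pointwise statement at $x_0$ then follows trivially.

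First I would derive a Caccioppoli-type inequality for negative powers of $u + \varepsilon$. Testing the weak formulation against $\phi = \eta^2 (u+\varepsilon)^{\beta}$ with $\eta$ a smooth cutoff supported in $B_g(x_0, R_0)$ and $\beta < 0$, and integrating by parts, yields after absorbing cross terms via Cauchy--Schwarz an estimate of the shape
$$\int \eta^2 |\nabla v|^2 \, dV_g \;\leq\; C(\beta, c)\int \bigl(|\nabla \eta|^2 + \eta^2\bigr)\, v^2 \, dV_g,$$
where $v = (u+\varepsilon)^{(\beta+1)/2}$; the zeroth-order contribution from $cu$ is absorbed because of the fixed ball radius $R_0$. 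Combining this with the assumed Sobolev inequality with constant $c_s$ and iterating Moser-style over a shrinking sequence of radii between $R_0/4$ and $R_0/2$ gives, for every $p > 0$,
$$\sup_{B_g(x_0,\, R_0/4)} (u+\varepsilon)^{-1} \;\leq\; C_p \Bigl(\int_{B_g(x_0,\, R_0/2)} (u+\varepsilon)^{-p}\, dV_g\Bigr)^{1/p}.$$

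The delicate step is bridging between $L^p$ norms of negative and positive powers of $u$. For this I would test the equation against $\phi = \eta^2/(u+\varepsilon)$ to obtain the integral bound $\int_{B_g} |\nabla \log(u+\varepsilon)|^2 \leq C$, then use the assumed Poincar\'e inequality with constant $c_p$ together with a John--Nirenberg lemma in this setting (which requires volume doubling, supplied by the hypothesis $\mathrm{Vol}(B_g(x_0, r)) \leq c_2 r^n$ and a Whitney-type covering argument) to conclude that $\log(u+\varepsilon)$ has bounded mean oscillation on $B_g(x_0, R_0/2)$ with uniform constants. This produces a small $p_0 = p_0(n, c, R_0, c_p, c_s, c_2) > 0$ such that
$$\Bigl(\int_{B_g(x_0,\, R_0/2)} (u+\varepsilon)^{p_0}\, dV_g\Bigr) \cdot \Bigl(\int_{B_g(x_0,\, R_0/2)} (u+\varepsilon)^{-p_0}\, dV_g\Bigr) \;\leq\; C.$$
Chaining this with the Moser iteration output, letting $\varepsilon \to 0^{+}$, and noting $u(x_0) \geq \inf_{B_g(x_0, R_0/4)} u$ then yields the claimed inequality.

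The main obstacle is tracking constants carefully through the Moser iteration and the logarithmic estimate so that the final $p_0$ and $C$ depend only on the listed parameters $(n, c, R_0, c_p, c_s, c_2)$ and nothing else; in particular the John--Nirenberg step on a manifold needs genuine control of coverings at all scales up to $R_0$, not merely at one scale. A cleaner alternative that avoids John--Nirenberg entirely would be to replace the bridging step by Bombieri--Giusti's elementary lemma, which interpolates between positive and negative $L^p$ norms purely from a one-parameter family of reverse-H\"older estimates; this would be my preferred streamlining if the direct route becomes cumbersome to make effective.
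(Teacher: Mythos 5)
The paper offers no proof of this proposition: it is imported verbatim as Lemma 2.5 of Choi--Treibergs \cite{ChoiTreibergs} and used as a black box (its only role here is to feed the $L^p$ lower bound into Lemma \ref{Wan}). So there is no in-paper argument to measure your proposal against; the relevant comparison is with the standard literature proof, and your outline is indeed the classical Moser weak-Harnack strategy that underlies the cited lemma: a Caccioppoli inequality for negative powers of $u+\varepsilon$, reverse Moser iteration through the Sobolev inequality to get $\sup_{B_{R_0/4}}(u+\varepsilon)^{-1}\leq C_p\|(u+\varepsilon)^{-1}\|_{L^p(B_{R_0/2})}$, and a crossover between negative and positive small powers. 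That is the right shape of argument, and you correctly isolate the crossover as the delicate step.

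Two caveats about the crossover as you have set it up. First, the hypothesis $\mathrm{Vol}(B_g(x_0,r))\leq c_2 r^n$ is only an upper volume bound; it does not give volume doubling, and neither John--Nirenberg on a metric measure space nor the Bombieri--Giusti lemma runs on an upper bound alone (both need either doubling or a lower volume bound on sub-balls to convert the $L^1$ gradient bound on $\log(u+\varepsilon)$ into the measure-decay condition $|\{\log u>\lambda\}|\lesssim \lambda^{-1}|B|$). Second, the Poincar\'e inequality is assumed only for functions \emph{supported} in $B_g(x_0,R_0)$, i.e.\ a Dirichlet-type inequality, whereas the log-oscillation estimate needs a Neumann-type Poincar\'e inequality $\int_B|f-f_B|^2\leq Cr^2\int_B|\nabla f|^2$ on sub-balls; these are not interchangeable. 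In the source these issues do not arise because Choi--Treibergs work in a concrete geometric setting where the stronger local inequalities are available at all scales, but as a self-contained proof from the hypotheses as literally stated, your crossover step has a genuine gap that would need to be closed either by strengthening the hypotheses to include doubling and a Neumann--Poincar\'e inequality, or by an argument that produces the small exponent $p_0$ without passing through bounded mean oscillation.
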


The following is the key lemma we will use frequently in this paper. It follows from the proof in Wan \cite[Theorem 13]{WAN} where he showed that bounded quadratic differentials correspond to quasi-conformal maps between the unit disk.
\begin{lem}\label{Wan}
Let $g$ be a Riemannian metric on $\mb{D}$ which is equivalent to the hyperbolic metric $g_{\mb{D}}$, i.e. $C_1^{-1}g\leq g_{\mb{D}}\leq C_1 g$ for some constant $C_1\geq 1$. Suppose the Gaussian curvature $K_g$ of $g$ satisfies $-H\leq K_g\leq0$ for some constant $H>0$. Let $u>0$ be a smooth function on $\mb{D}$ satisfying $-C_2^{-1}K_g\leq u\leq -C_2K_g$ for some constant $C_2\geq 1$.

Suppose $u$ also satisfies $$\triangle_g u\leq cu$$ for some constant $c>0$, then there is a constant $\delta_1=\delta_1(C_1,H,C_2,c)>0$, $\delta_2=\delta_2(C_1,H,C_2,c)>0$ such that $u\geq \delta_1$ and $K_{g}\leq -\delta_2$.

Moreover if we further assume $g$ is conformal to $g_{\mb{D}}$, then the assumption ``$-H\leq K_g\leq0$" above can be weakened to ``$K_g\leq0$", and the constants $\delta_1,\delta_2$ above in the assertion are independent on $H$.
\end{lem}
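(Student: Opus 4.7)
The plan is to prove the lemma by contradiction, combining the mean-value inequality of Proposition \ref{5} with a compactness argument via hyperbolic translations, the strong maximum principle, and a volume-growth rigidity contradiction between a complete flat surface and the hyperbolic disk.

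I would first verify that the hypotheses allow uniform application of Proposition \ref{5}. The bi-Lipschitz equivalence $C_1^{-1}g\leq g_{\mb{D}}\leq C_1 g$ together with $-H\leq K_g\leq 0$ endows $(\mb{D}, g)$ with bounded geometry: injectivity radius bounded below and curvature bounded, all in terms of $(C_1, H)$. Hence the Poincar\'e, Sobolev, and volume-comparison constants $R_0, c_p, c_s, c_2$ in Proposition \ref{5} can be chosen uniformly in $(C_1, H)$, and feeding $\Delta_g u\leq cu$ into Proposition \ref{5} yields $C>0$ and $p\in(0,1)$, depending only on $(C_1, H, c)$, with
\[u(x)\geq C\Bigl(\int_{B_g(x, R_0/2)} u^p\, dV_g\Bigr)^{1/p}\qquad\text{for all}\ x\in\mb{D}.\]

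Now suppose for contradiction that $\inf_{\mb{D}} u=0$, and pick $x_n\in\mb{D}$ with $u(x_n)\to 0$. Let $\varphi_n$ be an isometry of $(\mb{D}, g_{\mb{D}})$ with $\varphi_n(0)=x_n$, and set $g_n:=\varphi_n^{*}g$, $u_n:=u\circ\varphi_n$. The hypotheses transfer to $(g_n, u_n)$ with identical constants, and $u\leq -C_2 K_g\leq C_2 H$ gives $0\leq u_n\leq C_2 H$ uniformly. Cheeger--Gromov compactness for 2-manifolds with uniformly bounded curvature and positive injectivity radius extracts a subsequence $g_n\to g_\infty$ in $C^{1,\alpha}_{\mathrm{loc}}$; elliptic regularity for $\Delta_{g_n} u_n\leq cu_n$ with $u_n$ uniformly bounded extracts a further subsequence $u_n\to u_\infty$ in $C^{1,\alpha}_{\mathrm{loc}}$. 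The limit $u_\infty\geq 0$ is a weak supersolution of $\Delta_{g_\infty} v=cv$ with $u_\infty(0)=0$, so the strong maximum principle forces $u_\infty\equiv 0$ on $\mb{D}$. From $|K_{g_n}|\leq C_2 u_n$ one gets $K_{g_\infty}\equiv 0$, so $(\mb{D}, g_\infty)$ is a complete simply-connected flat Riemannian 2-manifold, i.e. isometric to Euclidean $\mathbb{R}^2$; yet $g_\infty$ remains bi-Lipschitz to $g_{\mb{D}}$, contradicting the fact that $\mathbb{R}^2$ has polynomial volume growth while hyperbolic 2-space has exponential volume growth. The contradiction gives $\inf_{\mb{D}}u=:\delta_1>0$ with $\delta_1$ depending only on $(C_1, H, C_2, c)$, and then $K_g\leq -u/C_2\leq -\delta_1/C_2=:-\delta_2$.

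For the moreover statement, write $g=e^{2\phi}g_{\mb{D}}$ with $|\phi|\leq \tfrac12\log C_1$ and assume only $K_g\leq 0$, i.e.\ $\Delta_{g_{\mb{D}}}\phi\geq -1$. The constants needed by Proposition \ref{5} now transfer from the hyperbolic background to $g$ via the bounded conformal factor alone and depend only on $C_1$. Using $\Delta_g u=e^{-2\phi}\Delta_{g_{\mb{D}}}u$, the hypothesis becomes $\Delta_{g_{\mb{D}}}u\leq cC_1 u$, and the contradiction argument is run on the fixed background $(\mb{D}, g_{\mb{D}})$ with hyperbolic translations $\varphi_n$. From $u_n(0)\to 0$ and weak Harnack, $u_n\to 0$ in $L^p(B_{g_{\mb{D}}}(0, R_0/2))$; iterating weak Harnack along an overlapping chain of hyperbolic balls and passing to a diagonal subsequence extends this to $u_n\to 0$ in $L^p_{\mathrm{loc}}(\mb{D})$. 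Hence $|K_{g_n}|\leq C_2 u_n$ and the curvature formula give $\Delta_{g_{\mb{D}}}\phi_n\to -1$ in $L^p_{\mathrm{loc}}$, and elliptic regularity on the uniformly bounded $\phi_n$ yields $\phi_n\to\phi_\infty$ in $W^{2,p}_{\mathrm{loc}}$ with $\Delta_{g_{\mb{D}}}\phi_\infty=-1$ on all of $\mb{D}$. Then $g_\infty=e^{2\phi_\infty}g_{\mb{D}}$ is a complete flat metric on $\mb{D}$ bi-Lipschitz to $g_{\mb{D}}$, and the same volume-growth contradiction closes the argument with constants $\delta_1, \delta_2$ depending only on $(C_1, C_2, c)$.

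The main obstacle is the global propagation step in the conformal case: without the lower curvature bound the uniform $L^\infty$ control on $u$ is lost, so one cannot lift to pointed $C^{1,\alpha}$ convergence of metrics as in the general case, and must instead work entirely on the fixed hyperbolic background and iterate weak Harnack to globalize the smallness of $u_n$. Everything else---the strong maximum principle, the elliptic regularity for $\phi_n$, and the exponential-vs-polynomial volume-growth rigidity---is standard.
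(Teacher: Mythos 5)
Your overall strategy -- blow up at points where $u$ is small, extract a limit, and derive a contradiction between a flat complete limit surface and bi-Lipschitz equivalence with the hyperbolic disk -- is coherent, but three steps as written would fail. First, the compactness claim for $u_n$: a uniformly bounded \emph{supersolution} of $\triangle_{g_n}u_n\leq cu_n$ carries no a priori $C^{1,\alpha}$ (or even gradient) bounds, since the inequality controls $\triangle u_n$ only from above; bounded superharmonic functions can have arbitrarily large oscillation of the gradient, so ``elliptic regularity'' does not extract a $C^{1,\alpha}_{\mathrm{loc}}$ limit $u_\infty$. (This is repairable by replacing pointwise convergence of $u_n$ with the $L^p$ smallness propagated by the weak Harnack inequality, as you do in the conformal case.) Second, the quantifier structure does not deliver the statement of the lemma: assuming $\inf_{\mb D}u=0$ for a \emph{fixed} pair $(g,u)$ and contradicting it only shows $\inf u>0$ for that pair; the asserted conclusion ``$\delta_1$ depending only on $(C_1,H,C_2,c)$'' requires running the contradiction over a sequence of admissible pairs $(g^{(k)},u^{(k)})$ with $\inf u^{(k)}\to 0$, which is not what is set up. Third, in the conformal case the exponent $p$ produced by Proposition \ref{5} lies in $(0,1)$, so ``$\phi_n\to\phi_\infty$ in $W^{2,p}_{\mathrm{loc}}$ with $\triangle_{g_{\mb D}}\phi_\infty=-1$'' via elliptic regularity is not meaningful: Calder\'on--Zygmund theory is unavailable for $p<1$, and since $K_g$ now has no lower bound there is also no $L^\infty$ bound on $u_n$ or on $\triangle_{g_{\mb D}}\phi_n$ to compensate. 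Passing the curvature to the limit in this regime needs a genuinely different argument (e.g.\ treating $\triangle\phi_n+1\geq 0$ as a sequence of nonnegative measures).

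For comparison, the paper's proof avoids all of this by staying quantitative and never passing to a limit: exponential volume growth of $g$ (inherited from $g_{\mb D}$) plus the Gauss--Bonnet formula for geodesic balls shows that every point has a ball of radius at most $R(C_1)$ on which $\int(-K_g)\,dV_g\geq 1$, hence $\int u\,dV_g\geq\alpha$; a single application of Proposition \ref{5}, together with the upper bound on $u$ to pass from the $L^1$ to the $L^p$ norm, then gives $u(x)\geq\delta_1$ directly with explicit dependence on the constants. The tension you exploit in the limit (flat cannot be bi-Lipschitz to hyperbolic) is the same tension the paper uses pointwise, but the direct route sidesteps the compactness machinery entirely -- which is precisely the point the authors emphasize when contrasting their method with the Benz\'ecri-compactness arguments of Benoist--Hulin and Labourie--Toulisse.
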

\begin{proof}
Since $g$ is equivalent to $g_{\mb{D}}$, noticing that $g_{\mb{D}}$ has exponential volume growth and is homogeneous, so there exists constants $\epsilon,\eta,M>0$, only depending on $C_1$, such that $\text{Vol}(B_{g}(x,r))\geq \epsilon e^{\eta r}-M$ for every ball $B_{g}(x,r)$ with respect to the metric $g$. So there exists a constant $R>0$ depending on $C_1$, such that for every $x\in \mb{D}$, there exists $r\in (0,R)$ satisfying $\frac{d^2}{dr^2}\text{Vol}(B_{g}(x,r))\geq 2\pi+1$.
Since $K_g\leq 0$ and $\mathbb{D}$ is simply connected, the exponential map is a diffeomorphism. Then $$\frac{d^2}{dr^2}\text{Vol}(B_{g}(x,r))=\frac{d}{dr}\text{L}(\partial B_{g}(x,r))=\int_{\partial B_{g}(x,r)}k_{g}ds_{g},$$
where $\text{L}$ is the length functional and $k_{g}$ is the geodesic curvature of $\partial B_{g}(x,r)$ with respect to the metric $g$. Then by the Gauss-Bonnet-Chern formula, we have $$\frac{d^2}{dr^2}\text{Vol}(B_{g}(x,r))=2\pi-\int_{B_{g}(x,r)}K_{g}dV_{g}.$$
So for every $x\in \mb{D}$ there is a constant $r\in (0,R)$ such that $\int_{B_{g}(x,r)}-K_{g}dV_{g}\geq 1$. Since $u\geq-C_2^{-1}K_g$, there is a constant $\alpha>0$ depending on $C_2$ such that $\int_{B_{g}(x,r)}udV_{g}\geq \alpha$.

Now we apply Proposition \ref{5} to obtain the $L^p$ estimate of $u$. In fact, let $R_0=2R$, where $R$ is defined above depending on $C_1$. Since $g$ is equivalent to $g_{\mb{D}}$, there is a constant $c_2>0$ depending on $C_1$ such that $\text{Vol}(B_{g}(x,r))\leq c_2 r^n$, $\forall r\leq R_0,\forall x\in\mb{D}$. Since $-H\leq K_g$, from \cite{Li-GA} Theorem 5.9, the Poincar\'e inequality holds for a constant $c_p$ depending on $H$. To see the Sobolev constant, since it is equivalent to the isoperimetric constant and $g$ is equivalent to $g_{\mb{D}}$, 
we see that the Sobolev inequality holds for a constant $c_s$ depending on $C_1$. Therefore from Proposition \ref{5}, we obtain that there are constants $C>0$, $0<p<1$ depending on $C_1,H,C_2,c$ such that for every $x\in \mb{D}$,
$$u(x)\geq C(\int_{B_{g}(x,R)}u^pdV_{g})^{\frac{1}{p}}.$$
Notice that from the assumption $u$ has an upper bound depending on $H,C_2$. Together with the $L^1$ estimate we obtained above, there are constants $C^{\prime},\delta_1>0$ only depending on $C_1,H,C_2,c$ such that
$$u(x)\geq C||u||_{L^p(B_{g}(x,R))}\geq C^{\prime}||u||^{\frac{1}{p}}_{L^1(B_{g}(x,r))}\geq \delta_1.$$
Since $u$ is mutually bounded by $-K_g$, then there is a constant $\delta_2=\delta_2(C_1,H,C_2,c)>0$ such that $K_{g}\leq -\delta_2$.
Moreover, we further assume $g=e^ug_{\mb{D}}$ and only assume $K_{g}\leq 0$. Then $\triangle_g=\frac{1}{e^u}\triangle_{g_{\mb{D}}}$. As the discussion above, we still have $\int_{B_{g}(x,r)}udV_{g}\geq \alpha$. Now we apply Proposition \ref{5} to $$\triangle_{g_{\mb{D}}}u=e^u\triangle_{g}u\leq Cu,$$ where $C$ depends on $C_1,c$. Then $u(x)\geq C(\int_{B_{g_{\mb{D}}}(x,R)}u^pdV_{g_{\mb{D}}})^{\frac{1}{p}}.$ Since $g_{\mb{D}}$ is equivalent to $g$, we still obtain $u(x)\geq C(\int_{B_{g}(x,R)}u^pdV_{g})^{\frac{1}{p}},$ where $C$ only depends on $C_1,C_2,c$. Then we repeat the same procedure and finish the proof.
\end{proof}

The next lemma is the Cheng-Yau maximum principle which allow us to deal with complete Riemann manifolds whose curvature is bounded from below.
\begin{lem}[Cheng-Yau Maximum Principle \cite{ChengYau}]\label{Cheng-Yau}
Suppose $(M,h)$ is a complete manifold with Ricci curvature bounded from below. Let $u$ be a $C^2$-function defined on $M$ such that
$\triangle_h u\geq f(u),$ where $f:\mathbb{R}\rightarrow \mathbb{R}$ is a function. Suppose there is a continuous positive function $g(t):[a,\infty)\rightarrow \mathbb{R}_+$ such that \\
(i) $g$ is non-decreasing; \\
(ii) $\liminf\limits_{t\rightarrow +\infty}\frac{f(t)}{g(t)}>0$;\\
(iii) $\int_a^{\infty}(\int_b^tg(\tau)d\tau)^{-\frac{1}{2}}dt<\infty$, for some $b\geq a$,\\
 then the function $u$ is bounded from above. Moreover, if $f$ is lower semi-continuous, $f(\sup u)\leq 0$.
\end{lem}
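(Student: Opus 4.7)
The plan is to reduce the assertion to the Omori-Yau maximum principle, which states that on a complete Riemannian manifold with Ricci curvature bounded from below, any $C^2$ function $w$ bounded above admits a sequence $\{x_k\}\subset M$ with $w(x_k)\to\sup w$, $|\nabla w|(x_k)\to 0$, and $\limsup_k\triangle_h w(x_k)\leq 0$. Omori-Yau itself is proved using the Laplacian comparison estimate $\triangle_h r\leq C(1+r)$ (in the barrier sense) for the distance $r(x)=d_h(x,x_0)$ to a fixed point, and I would take it as a black box.

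The core of the argument is to show $u$ is bounded above. To exploit (iii), set $G(s)=\int_b^s g(\tau)\,d\tau$ and $F(t)=\int_a^t G(s)^{-1/2}\,ds$; by (iii), $F$ is bounded, so $L:=\lim_{t\to\infty}F(t)<\infty$. Extend $F$ smoothly and monotonically to all of $\mathbb{R}$ (the extension for $t\leq a$ is irrelevant once $u$ is large) and set $v=F(u)$. A direct calculation on $\{u>a\}$ gives
$$\triangle_h v \;=\; G(u)^{-1/2}\triangle_h u \;-\; \tfrac{1}{2}G(u)^{-3/2}g(u)|\nabla u|^2 \;\geq\; G(u)^{-1/2}f(u)-\tfrac{1}{2}G(u)^{-1/2}g(u)|\nabla v|^2,$$
using $|\nabla v|^2=G(u)^{-1}|\nabla u|^2$. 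Assume for contradiction $\sup u=+\infty$. Applying Omori-Yau to the bounded function $v$ yields $\{x_k\}$ with $v(x_k)\to L$ (hence $u(x_k)\to\infty$), $|\nabla v|(x_k)\to 0$, and $\triangle_h v(x_k)\leq 1/k$. By (ii), $f(u(x_k))\geq c\,g(u(x_k))$ for large $k$; combining with the displayed inequality and $|\nabla v(x_k)|\to 0$ yields
$$g(u(x_k))\;\leq\;\frac{C\,G(u(x_k))^{1/2}}{k}.$$
But (iii) forces $g(s)/G(s)^{1/2}\to\infty$ as $s\to\infty$: if $g/G^{1/2}\leq M$, then integrating $(G^{1/2})'=g/(2G^{1/2})\leq M/2$ yields $G(s)=O(s^2)$, contradicting the integrability of $G^{-1/2}$. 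This is the contradiction, so $u$ is bounded from above.

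With $u$ bounded, one more application of Omori-Yau directly to $u$ produces $y_k$ with $u(y_k)\to\sup u$ and $\triangle_h u(y_k)\leq 1/k$, hence $f(u(y_k))\leq \triangle_h u(y_k)\leq 1/k$. Lower semi-continuity of $f$ then gives $f(\sup u)\leq\liminf_k f(u(y_k))\leq 0$.

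The main obstacle is the boundedness step: the function $F$ is engineered precisely so that (iii) renders $v$ bounded, and the interplay between (ii) and the growth of $g/G^{1/2}$ forced by (iii) is what produces the contradiction. Smoothness of $v$ at the boundary $\{u=a\}$ is a minor technicality handled by smoothly extending $F$ below $a$ or by localizing the argument to the region where $u$ is large.
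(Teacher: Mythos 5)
The paper offers no proof of this lemma at all: it is quoted verbatim from \cite{ChengYau} and used as a black box, so there is no internal argument to compare against. Your route --- substituting $v=F(u)$ with $F(t)=\int_a^t G(s)^{-1/2}ds$, $G(s)=\int_b^s g$, and then invoking the Omori--Yau almost-maximum principle on the bounded function $v$ --- is the standard way this generalized maximum principle is derived in the literature, and the computation of $\triangle_h v$ and the final step ($f(\sup u)\le 0$ via lower semicontinuity along an Omori--Yau sequence for $u$) are correct.

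There is one genuine soft spot: your justification of the key claim $g(s)/G(s)^{1/2}\to\infty$. Assuming $g/G^{1/2}\le M$ on a ray and deriving $G(s)=O(s^2)$ only rules out that the ratio is \emph{bounded}, i.e.\ it proves $\limsup_{s\to\infty} g/G^{1/2}=\infty$; what your contradiction actually requires is $\liminf_{s\to\infty} g/G^{1/2}>0$, since you need to exclude $g(u(x_k))/G(u(x_k))^{1/2}\to 0$ along the particular sequence $u(x_k)\to\infty$. The claim is nevertheless true, but the fix must use hypothesis (i), which your write-up never invokes. Concretely: (iii) forces $G(s)/s^2\to\infty$ (if $G(s_j)\le Cs_j^2$ along $s_j\to\infty$ with $s_{j+1}\ge 2s_j$, monotonicity of $G$ gives $\int_{s_j/2}^{s_j}G^{-1/2}\ge \tfrac{1}{2\sqrt{C}}$ on disjoint intervals, contradicting integrability), and monotonicity of $g$ gives $G(s)=\int_b^s g\le g(s)(s-b)$, whence
\begin{equation*}
\frac{g(s)}{G(s)^{1/2}}\;\ge\;\frac{G(s)^{1/2}}{s-b}\;\ge\;\Bigl(\frac{G(s)}{s^2}\Bigr)^{1/2}\longrightarrow\infty .
\end{equation*}
With that one line inserted, the argument closes. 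A cosmetic point: since $G(s)<0$ for $s<b$, the function $F$ should be based at $b$ rather than $a$ (or the argument localized to $\{u>b\}$); as you note, only the behavior of $F$ for large $t$ matters, so this is harmless.
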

In particular, for $\alpha>1$ and a positive constant $c_0$, one can check if $f(t)\geq c_0t^{\alpha}$ for $t$ large enough, $g(t)=t^{\frac{\alpha+1}{2}}$ satisfy the above three conditions (i)(ii)(iii).


\begin{lem}\label{KC}
Let $g_0$ be a complete Riemannian metric on $\Sigma$ with $K_{g_0}\geq -a$ for some constant $a>0$. Let $g=e^ug_{0}$ be a Riemannian metric conformal to $g_0$ with curvature $K_{g}\leq -b$ for some constant $b>0$. Then there is a constant $C>0$ such that $g\leq \frac{a}{b}g_{0}$.
\end{lem}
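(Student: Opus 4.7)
The plan is to combine the conformal change-of-curvature formula with the Cheng--Yau maximum principle (Lemma \ref{Cheng-Yau}) applied on the complete base $(\Sigma, g_0)$.

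First, I would write down the PDE satisfied by the conformal factor. On a surface, if $g = e^{u}g_0$ then the Gaussian curvatures are related by
$K_g = e^{-u}\bigl(K_{g_0} - \tfrac{1}{2}\Delta_{g_0} u\bigr),$
so $\tfrac{1}{2}\Delta_{g_0} u = K_{g_0} - e^{u}K_g.$ Using the two curvature hypotheses $K_{g_0}\geq -a$ and $K_g\leq -b$, this yields the pointwise differential inequality
$\Delta_{g_0} u \;\geq\; 2b\,e^{u} - 2a \quad \text{on } \Sigma.$

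Next, I would apply Lemma \ref{Cheng-Yau} with $f(t) = 2be^{t}-2a$. The metric $g_0$ is complete by assumption, and since $\Sigma$ is two-dimensional the Ricci curvature coincides with $K_{g_0}$, hence is bounded below by $-a$. The function $f$ is continuous (in particular lower semi-continuous) and, for $t$ large, dominates any polynomial; in particular $f(t)\geq t^{2}$ eventually, so the remark following Lemma \ref{Cheng-Yau} (with $\alpha=2$ and $g(t)=t^{3/2}$) supplies a comparison function fulfilling conditions (i)--(iii). The maximum principle then forces $u$ to be bounded above with $f(\sup u)\leq 0$, i.e. $2b\,e^{\sup u}-2a\leq 0$, so $e^{u}\leq a/b$ pointwise. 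Translating back, this is exactly $g = e^{u}g_0 \leq (a/b)\,g_0$, which is the desired conclusion (so one may take $C = a/b$).

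No serious obstacle is expected: the whole argument is a one-shot application of the maximum principle to the Liouville-type equation arising from the conformal change. The only minor care needed is to fix the sign convention for $\Delta_{g_0}$ (geometer's Laplacian $\mathrm{div}\,\nabla$, so that $\Delta_{g_0} u \leq 0$ at a maximum) in a way consistent with the convention implicit in Lemma \ref{Cheng-Yau}; with that convention the inequality at the supremum forces $f(\sup u)\leq 0$ and the bound on $e^{u}$ follows.
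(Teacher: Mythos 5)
Your proposal is correct and follows essentially the same route as the paper: both derive the differential inequality $\triangle_{g_0}u \geq \mathrm{const}\cdot(b\,e^{u}-a)$ from the conformal change-of-curvature formula and then invoke the Cheng--Yau maximum principle (Lemma \ref{Cheng-Yau}) to conclude $e^{u}\leq a/b$. The only differences are cosmetic (the paper computes in local complex coordinates with its own Laplacian normalization, whereas you use the intrinsic formula and spell out the verification of the Cheng--Yau hypotheses a bit more explicitly).
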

\begin{proof}
Locally,
$$-b\geq K_{g}=-\frac{2}{e^u\ti{g}_{0}}\partial_z\partial_{\bar{z}}\log (e^u\ti{g}_{0}).$$ Then
$$-\frac{2}{e^u}\triangle_{g_{0}}u+\frac{1}{e^u}K_{g_0}\leq -b.$$ So
$$\triangle_{g_{0}}u\geq \frac{b}{2}e^u+\frac{K_{g_0}}{2}\geq \frac{b}{2}e^u-\frac{a}{2}.$$
Then by the Cheng-Yau maximum principle, $e^u\leq \frac{a}{b}.$
\end{proof}

Combining Lemma \ref{Wan}, \ref{Cheng-Yau} and \ref{KC}, we show the following lemma, which will be used frequently throughout this paper.

\begin{lem}\label{KEYLEMMA}
Suppose $(\Sigma,g)$ is a complete surface with curvature bounded from below. Suppose there exist constants $c,d>0$ such that
\begin{equation}\label{ModelEquation}
    \triangle_{g}K_{g}\geq cK_{g}(K_{g}+d).
\end{equation}
Then $K_{g}<0$ or $K_g\equiv 0$ in which case $\Sigma$ is parabolic.

When $\Sigma$ is hyperbolic and let $g_{hyp}$ be the unique complete conformal hyperbolic metric. If $g\leq Cg_{hyp}$ for some constant $C>0$, there is a constant $\delta>0$ such that
$K_{g}\leq -\delta.$
\end{lem}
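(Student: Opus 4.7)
The plan is to handle the sign dichotomy first and then, in the hyperbolic case, promote the strict negativity of $K_g$ to a uniform negative upper bound.

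For the dichotomy, first apply Cheng--Yau (Lemma \ref{Cheng-Yau}) to $K_g$ with $f(t)=ct(t+d)$; since $f(t)\geq ct^2$ for large $t$, the remark after the lemma applies, and because the Ricci curvature of $g$ equals $K_g$ and is bounded below, we conclude $f(\sup K_g)\leq 0$, hence $\sup K_g\in[-d,0]$ and in particular $K_g\leq 0$ on $\Sigma$. Setting $v:=-K_g\geq 0$ and using $\triangle_g K_g\geq cK_g^2+cdK_g$, I obtain $\triangle_g v\leq -cv^2+cdv\leq cdv$, so $v$ is a nonnegative supersolution of the linear elliptic operator $L:=\triangle_g-cd\cdot\mathrm{id}$ whose zeroth-order coefficient $-cd$ is nonpositive. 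The strong minimum principle then forces either $v>0$ everywhere (i.e.\ $K_g<0$) or $v\equiv 0$ (i.e.\ $K_g\equiv 0$). In the latter case $(\Sigma,g)$ is a complete flat surface whose universal cover is $\cnum$, so $\Sigma$ is parabolic.

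For the hyperbolic case with $g\leq Cg_{hyp}$, the idea is to upgrade the one-sided bound to a two-sided equivalence $g\sim g_{hyp}$ and then invoke the moreover part of Lemma \ref{Wan}. The upper bound is given; the matching lower bound is extracted from Lemma \ref{KC} used in reverse. Let $H>0$ be such that $K_g\geq -H$ (given by hypothesis). I apply Lemma \ref{KC} with $g_0:=g$ (complete with $K_{g_0}\geq -H$, so $a=H$) and with the conformal rescaling $g_{hyp}=e^{w}g_0$ playing the role of the target metric in that lemma's statement, noting $K_{g_{hyp}}=-1$ permits $b=1$. The conclusion $g_{hyp}\leq H\,g$ gives $g\geq H^{-1}g_{hyp}$; combined with $g\leq Cg_{hyp}$, this yields equivalence of $g$ and $g_{hyp}$ with constant $C_1=\max(H,C)$. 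Now lift everything to the universal cover $\mathbb{D}$, where $g_{hyp}$ pulls back to the Poincar\'e metric $g_{\mathbb{D}}$. On $\mathbb{D}$ the lifted metric $\widetilde g$ is conformal and equivalent to $g_{\mathbb{D}}$, and $K_{\widetilde g}\leq 0$. Taking $u:=-K_{\widetilde g}>0$ (strict by the dichotomy, since the hyperbolic case excludes flatness) and $C_2=1$, the condition $-C_2^{-1}K_{\widetilde g}\leq u\leq -C_2 K_{\widetilde g}$ is automatic, while the computation above gives $\triangle_{\widetilde g} u\leq cd\,u$. All hypotheses of the moreover part of Lemma \ref{Wan} are in place, so there exists a uniform $\delta>0$ with $K_{\widetilde g}\leq -\delta$, which descends to $\Sigma$ by deck-group invariance.

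The main obstacle is the absence of a lower bound $g\geq c\,g_{hyp}$ among the hypotheses: without it one cannot directly invoke Lemma \ref{Wan}. The unlocking step is to read Lemma \ref{KC} in the ``reverse'' direction, taking $g$ itself as the base metric $g_0$ and exploiting the \emph{known} constant curvature $-1$ of $g_{hyp}$ as the required $b$, rather than in its usual reading where a metric bound is deduced from a curvature bound on the conformally rescaled metric.
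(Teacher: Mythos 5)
Your proof is correct and follows essentially the same route as the paper: Cheng--Yau with the quadratic nonlinearity to get $\sup K_g\leq 0$, the strong maximum principle for the dichotomy $K_g<0$ or $K_g\equiv 0$, then Lemma \ref{KC} applied with $g$ as the base metric and $g_{hyp}$ (of curvature $-1$) as the conformal rescaling to obtain $g\geq a^{-1}g_{hyp}$, and finally the ``moreover'' part of Lemma \ref{Wan} on the universal cover with $u=-K_{\widetilde g}$. The only difference is expository: you spell out the reverse reading of Lemma \ref{KC} and the verification of Lemma \ref{Wan}'s hypotheses, which the paper compresses into one sentence.
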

\begin{proof}
Since $g$ is complete and $K_{g}$ has a lower bound, the background metric is enough to apply the Cheng-Yau maximum principle.
Since the right hand side of the equation has quadratic growth, from the Cheng-Yau maximum principle (Lemma \ref{Cheng-Yau}), $K_{g}$ has a upper bound and $c\sup K_g(\sup K_g+d)\leq 0$. Then $\sup K_g\leq 0$.  Then by the strong maximum principle, either $K_g<0$ or $K_g\equiv 0$.

Lift $(\Sigma,g)$ to the cover $(\widetilde \Sigma\cong \mathbb D,\widetilde g)$. Then the condition $g\leq Cg_{hyp}$ implies  $\widetilde g\leq Cg_{\mathbb D}$. Then together with Lemma \ref{KC}, we may apply Lemma \ref{Wan} to $$\triangle_{\widetilde g}(-K_{\widetilde g})\leq -cK_{\widetilde g}(K_{\widetilde g}+d)\leq -cdK_{\widetilde g}.$$  Then we obtain that $K_{\widetilde g}\leq -\delta$ for  some constant $\delta>0.$ So is $K_g.$
\end{proof}

\section{Preliminaries on Higgs bundles}\label{pre}
In this section, we recall some facts on Higgs bundles we use in this article. One may refer \cite{LiIntroduction} for some content of this section.
\subsection{Higgs bundles over Riemann surfaces}
Let $\Sigma$ be a Riemann surface and $K$ be the canonical line bundle of $\Sigma$.
\begin{df}
A Higgs bundle over a Riemann surface $\Sigma$ is a pair $(E,\phi)$ where $E$ is a holomorphic bundle over $\Sigma$ of rank $r$ and $\phi:E\rightarrow E\otimes K$ is holomorphic. Moreover a $SL(r,\mb{C})$-Higgs is a Higgs bundle with $\det E=\mathcal{O}$ and $\text{tr}\phi=0$.
\end{df}

Let $h$ be a Hermitian metric on $E$. We also denote $h$ as the induced Hermitian metric on $End(E)$.

\begin{df}
A Hermitian metric $h$ on a Higgs bundle is called harmonic, if it satisfies the Hitchin equation
\[F(h)+[\phi, \phi^{*_h}]=0,\]
where $F(h)$ is the curvature of the Chern connection $\nabla$ with respect to $h$, $\phi^{*_h}$ is the adjoint of $\phi$ with respect to the metric $h$, and the bracket $[,]$ is the Lie bracket on $End(E)$-valued $1$-forms.
\end{df}

Locally, suppose $\phi=fdz$ for $f$ a local holomorphic section of $End(E)$. We have the following useful estimate.
\begin{lem}\label{SimpsonEstimate}
For a holomorphic section $s$ of $End(E)$, then locally, we have
\begin{equation*}
\partial_z\partial_{\bar z}\log |s|_h^2\geq \frac{|[s,f^{*_h}]|_h^2-|[s,f]|_h^2}{|s|_h^2}.
\end{equation*}
\end{lem}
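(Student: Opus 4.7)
This is a Simpson-type Bochner--Kodaira inequality, and the plan is to derive it from the standard $\partial\bar\partial\log|s|^2$ identity for holomorphic sections of a Hermitian holomorphic bundle, applied to $\End(E)$ equipped with the induced metric $h_{\End}(A,B)=\tr(AB^{*_h})$, and then to rewrite the resulting curvature term using the Hitchin equation.

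First, for a holomorphic section $s$ of an arbitrary Hermitian holomorphic bundle $(V,h_V)$ with Chern connection $D=D'+D''$, the compatibility of $D$ with $h_V$ combined with $D''s=0$ and the Hermiticity of $F(h_V)(\partial_z,\partial_{\bar z})$ gives
\[\partial_z\partial_{\bar z}|s|^2_{h_V} \;=\; |D'_{\partial_z}s|^2_{h_V}-h_V\bigl(F(h_V)(\partial_z,\partial_{\bar z})s,s\bigr).\]
Dividing by $|s|^2$ and invoking the Cauchy--Schwarz inequality $|h_V(D'_{\partial_z}s,s)|^2\le |D'_{\partial_z}s|^2\cdot|s|^2$ produces the general Kodaira-type bound
\[\partial_z\partial_{\bar z}\log|s|^2_{h_V}\;\ge\; -\frac{h_V\bigl(F(h_V)(\partial_z,\partial_{\bar z})s,s\bigr)}{|s|^2_{h_V}}.\]

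Next, I specialise to $(V,h_V)=(\End(E),h_{\End})$. The induced curvature acts by commutator, $F_{\End(E)}(X,Y)s=[F(h)(X,Y),s]$, and the Hitchin equation $F(h)+[\phi,\phi^{*_h}]=0$ yields $F(h)(\partial_z,\partial_{\bar z})=-[f,f^{*_h}]$. Consequently
\[-h_{\End}\bigl(F_{\End(E)}(\partial_z,\partial_{\bar z})s,\,s\bigr)\;=\;h_{\End}\bigl([[f,f^{*_h}],s],\,s\bigr).\]
The Jacobi identity then splits the double commutator as
\[\bigl[[f,f^{*_h}],s\bigr]\;=\;\bigl[f,[f^{*_h},s]\bigr]-\bigl[f^{*_h},[f,s]\bigr],\]
and a direct check using cyclicity of trace together with $[Y,Z]^{*_h}=[Z^{*_h},Y^{*_h}]$ shows that the commutator operator $[X,\cdot]$ has $h_{\End}$-adjoint $[X^{*_h},\cdot]$, i.e.\ $h_{\End}([X,A],B)=h_{\End}(A,[X^{*_h},B])$. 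Applying this with $X=f$ to the first term and $X=f^{*_h}$ to the second collapses them into $|[f^{*_h},s]|^2_h$ and $-|[f,s]|^2_h$, respectively, matching the desired $|[s,f^{*_h}]|^2_h$ and $-|[s,f]|^2_h$.

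The argument is almost entirely bookkeeping; the essential input is the Hitchin equation, which is precisely what turns the abstract Chern curvature on $\End(E)$ into a commutator of $f$ and $f^{*_h}$ amenable to Jacobi/adjoint manipulation. The only delicate point is tracking signs so that $|[s,f^{*_h}]|^2$ carries $+$ and $|[s,f]|^2$ carries $-$; this is forced by the sign flip in the Jacobi expansion together with $(f^{*_h})^{*_h}=f$.
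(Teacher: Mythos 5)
Your proposal is correct and follows essentially the same route as the paper's proof: expand $\partial_z\partial_{\bar z}\log|s|_h^2$ via the Chern connection on $\End(E)$, discard the nonnegative term by Cauchy--Schwarz, convert the curvature term using the Hitchin equation, and finish with the Jacobi identity together with the adjoint property $h(u,[v,w])=h([v^{*_h},u],w)$. The only cosmetic difference is that you quote the Bochner--Kodaira identity in its packaged form, while the paper derives it in two lines from $\partial_{\bar z}s=0$; the content is identical.
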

\begin{proof}
Denote $\partial_{z,h}$ as the $(1,0)$ part of the Chern connection on $End(E)$ with respect to $h$. Since $s$ is holomorphic, we have the equality
$\partial_z\partial_{\bar z}|s|_h^2=|\partial_{z,h}s|^2+h(s, \partial_{\bar z}\partial_{z,h}s).$ Then
\begin{eqnarray*}
\partial_z\partial_{\bar z}\log |s|_h^2=\frac{\partial_z\partial_{\bar z}|s|_h^2}{|s|_h^2}-\frac{\partial_z|s|_h^2}{|s|_h^2}\frac{\partial_{\bar z}|s|_h^2}{|s|_h^2}=\frac{h(s, \partial_{\bar z}\partial_{z,h}s)}{|s|_h^2}+\frac{|\partial_{z,h}s|^2}{|s|_h^2}-\frac{\partial_z|s|_h^2}{|s|_h^2}\frac{\partial_{\bar z}|s|_h^2}{|s|_h^2}.
\end{eqnarray*}
Notice that
\begin{eqnarray*}
\partial_z|s|_h^2\partial_{\bar z}|s|_h^2=|\partial_{\bar{z}}|s|_h^2|^2=|h(\partial_{z,h}s,s)|^2\leq |\partial_{z,h}s|_h^2|s|^2_h.
\end{eqnarray*}
So
\begin{eqnarray*}
\partial_z\partial_{\bar z}\log |s|_h^2\geq\frac{h(s, \partial_{\bar z}\partial_{z,h}s)}{|s|_h^2}=\frac{h(s, (\partial_{\bar z}\partial_{z,h}-\partial_{z,h}\partial_{\bar z})s)}{|s|_h^2}=\frac{h(s, -F^{End(E)}(h)s)}{|s|_h^2}.
\end{eqnarray*}
By the Hitchin equation, locally we have $F(h)+[\phi, \phi^{*_h}]=F(h)+[f,f^{*_h}]dz\wedge d\bar z=0$. So $$-F^{End(E)}(h)s=[[f,f^{*_h}],s].$$
So we have
\begin{eqnarray*}
\partial_z\partial_{\bar z}\log |s|_h^2\geq \frac{h(s, [[f, f^{*_h}],s])}{|s|_h^2}&=&\frac{h(s, [f,[ f^{*_h},s]])}{|s|_h^2}-\frac{h(s, [f^{*_h},[f,s]])}{|s|_h^2}.
\end{eqnarray*}
Since $h(u,[v,w])=h([v^{*_h},u],w)$, we obtain $\partial_z\partial_{\bar z}\log |s|_h^2\geq \frac{|[s,f^{*_h}]|_h^2-|[s,f]|_h^2}{|s|_h^2}.$
\end{proof}

The Hitchin equation together with the holomorphicity of $\phi$ gives a $\rho$-equivariant harmonic map $f:\widetilde{\Sigma}\rightarrow N:=SL(r,\mb{C})/SU(r)$, where $\widetilde{\Sigma}$ is the universal cover of $\Sigma$ and $\rho$ is the holonomy representation of the flat connection $D=\nabla+\phi+\phi^{*_h}$. The Hopf differential of $f$ is $\text{Hopf}(f)=2r\text{tr}(\phi^2)$ and the energy density of $f$ is $e(f)=2r\text{tr}(\phi\phi^*).$


\subsection{Toda system}

Let $\Sigma$ be a Riemann surface.  
Let $g=\tilde{g}dz\otimes d\bar z$ be a K\"ahler metric of $\Sigma$. The K\"ahler form $\omega$ is given by $\omega=\frac{\sqrt{-1}}{2}\tilde{g}dz\wedge d\bar z$ with respect to Re$(g)$. (Notice that in some literature the K\"ahler form $\omega$ is to $g+\bar{g}$.)

The cyclic Higgs bundles in the Hitchin component 
have the following form:
\[E=K^{\frac{r-1}{2}}\oplus K^{\frac{r-3}{2}}\oplus \cdots \oplus K^{\frac{3-r}{2}}\oplus K^{\frac{1-r}{2}}, \quad \phi=\begin{pmatrix}&&&&q\\1&&&&\\&1&&&\\&&\ddots&&\\&&&1&\end{pmatrix},\]
where $q$ is a holomorphic $r$-differential on $\Sigma.$

We also view the K\"ahler metric $g$ as a Hermitian metric on $K^{-1}$. 
Let $F(g)$ denote the curvature of the Chern connection of $K^{-1}$ with $g$. So $F(g)=-\partial\bar\partial\log \tilde{g}$.
The Laplacian with respect to $g$, the Gaussian curvature of $\Re(g)=\frac{1}{2}(g+\bar g)$, and the square norm of $q$ with respect to $g$ are respectively:
\[\triangle_g=\frac{1}{\tilde{g}}\partial_z\partial_{\bar z},\quad K_g=-\frac{2}{\tilde{g}}\partial_z\partial_{\bar z}\log \tilde{g},\quad |q|_g^2=\frac{q(z)\bar q(z)}{\tilde{g}^r}.\]

The Hermitian metric $g$ induces the Hermitian metrics $(g^{-1})^{\otimes (r+1-2i)/2}$ on $K^{(r+1-2i)/2}$, and a diagonal Hermitian metric $\bigoplus\limits_{i=1}^r(g^{-1})^{\otimes (r+1-2i)/2}$ on $E$. 
For any $\real^r$-valued function $\vecw=(w_1,\ldots,w_r)$, we define a Hermitian metric on $E$,
$$h(g,\vecw):=\bigoplus\limits_{i=1}^{r} e^{w_i}(g^{-1})^{\otimes (r+1-2i)/2}.$$
\begin{rem}\label{equiv}
Fix the Riemann surface $\Sigma$, consider another K\"ahler metric $g^{\prime}=e^fg$. Let $\vecw^{\prime}=(w_1+\frac{r-1}{2}f,\ldots,w_r+\frac{1-r}{2}f)$. Then $h(g,\vecw)=h(g^{\prime},\vecw^{\prime})$.
\end{rem}
By direct calculation, for $h(g,\vecw)$, it being harmonic is equivalent to it satisfying the Toda system. One may refer \cite{LiMochizuki}.
\begin{prop}
The Hermitian metric $h(g,\vecw)$ is harmonic if and only if
$\vecw$ satisfies the following Toda system:
\begin{eqnarray}\label{eq;20.9.21.1}
\triangle_g w_1&=&|q|_g^2e^{w_1-w_r}-e^{w_2-w_1}-\frac{r-1}{4}K_g, \nonumber\\
\triangle_g w_i&=&e^{w_i-w_{i-1}}-e^{w_{i+1}-w_{i}}-\frac{r+1-2i}{4}K_g,\quad i=2,\ldots,r-1\\
\triangle_g w_r&=&e^{w_r-w_{r-1}}-|q|_g^2e^{w_1-w_r}-\frac{1-r}{4}K_g. \nonumber
\end{eqnarray}
and $\sum\limits_{i=1}^r w_i=0.$
In particular, in the case $r=2$, the Hermitian metric $h(g,\vecw)$ if and only if $\vecw=(w_1,-w_1)$ satisfies
\begin{eqnarray}\label{harmonic}
\triangle_g w_1=|q|_g^2e^{2w_1}-e^{-2w_1}+\frac{1}{4}.
\end{eqnarray}
\end{prop}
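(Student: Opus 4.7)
The plan is to verify the statement by a direct matrix calculation, since with $h = h(g,\vecw)$ diagonal and $\phi$ cyclic, the Hitchin equation $F(h) + [\phi,\phi^{*_h}] = 0$ decouples into $r$ scalar equations which we will identify with the Toda system.

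First, I would compute the Chern curvature $F(h)$. Writing $h_i := e^{w_i}\tilde{g}^{-(r+1-2i)/2}$ for the Hermitian metric on the $i$-th summand $K^{(r+1-2i)/2}$, the curvature of the direct-sum Chern connection is diagonal with
\[
F_i = -\partial\bar\partial \log h_i = \Bigl(-\partial_z\partial_{\bar z} w_i - \tfrac{r+1-2i}{4}\,\tilde g\,K_g\Bigr)\,dz\wedge d\bar z,
\]
using the identity $\partial_z\partial_{\bar z}\log\tilde g = -\tfrac{1}{2}\tilde g K_g$. Next I would write $\phi = M\,dz$ with $M$ the given cyclic matrix, and read off $\phi^{*_h} = M^*\,d\bar z$ using the fact that, for a diagonal $h$, the adjoint of a single component $M_{ji}$ picks up a factor $h_j/h_i$ and a complex conjugate. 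This yields $M^*_{i,i+1} = h_{i+1}/h_i$ for the subdiagonal $1$'s and $M^*_{r,1} = (h_1/h_r)\bar q$ for the $q$ entry, with all other entries zero.

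Then I would compute $[\phi,\phi^{*_h}] = (MM^* - M^*M)\,dz\wedge d\bar z$. Because $M$ is a cyclic shift plus the corner entry $q$, both products are diagonal: one checks
\[
(MM^*)_{11} = (h_1/h_r)|q|^2,\quad (MM^*)_{ii} = h_i/h_{i-1}\ (i\geq 2),\quad (M^*M)_{ii} = h_{i+1}/h_i\ (i\leq r-1),\quad (M^*M)_{rr} = (h_1/h_r)|q|^2.
\]
The $i$-th diagonal entry of $F(h) + [\phi,\phi^{*_h}] = 0$ then reads $-\partial_z\partial_{\bar z}w_i - \tfrac{r+1-2i}{4}\tilde g K_g + ([M,M^*])_{ii} = 0$. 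Dividing by $\tilde g$ turns $\tilde g^{-1}\partial_z\partial_{\bar z}$ into $\triangle_g$ and simplifies the ratios via $h_{i+1}/h_i = e^{w_{i+1}-w_i}\tilde g$ and $h_1/h_r = e^{w_1-w_r}\tilde g^{-(r-1)}$. Using also $|q|_g^2 = |q|^2/\tilde g^{r}$, the three cases $i=1$, $2\leq i\leq r-1$, $i=r$ deliver exactly the three lines of the Toda system as stated. The converse (Toda $\Rightarrow$ harmonic) is immediate by reversing the computation, and the $r=2$ specialization drops out by setting $r=2$ and using $w_2 = -w_1$.

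Finally, the normalization $\sum_i w_i = 0$ records the $SL$-condition: since $\sum_i (r+1-2i)/2 = 0$, one has $\det h = \prod_i h_i = e^{\sum_i w_i}$, so the requirement that $h$ be compatible with the canonical trivialization $\det E \cong \mathcal{O}$ forces $\sum w_i = 0$. (Consistent with this, summing the $r$ Toda equations telescopes the exponential terms and the curvature coefficients, giving $\triangle_g \sum w_i = 0$, so the constraint is preserved by the flow.) The only real obstacle is bookkeeping of conventions — the sign in $F = -\partial\bar\partial\log h_i$, the factor $h_j/h_i$ in the diagonal-metric adjoint, and the algebra that turns the $h_i$-ratios into the clean exponentials $e^{w_i - w_{i\pm 1}}$ and $|q|_g^2 e^{w_1-w_r}$.
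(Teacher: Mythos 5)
Your computation is correct and is exactly the ``direct calculation'' the paper invokes without writing out (it defers to Li--Mochizuki): the diagonal Chern curvature plus the diagonal commutator $[M,M^{*}]$, with $h_{i+1}/h_i=e^{w_{i+1}-w_i}\tilde g$ and $h_1/h_r=e^{w_1-w_r}\tilde g^{\,1-r}$, yields precisely the three lines of the Toda system, and the $SL$-normalization $\det h=1$ accounts for $\sum_i w_i=0$ (which, as you note, the Hitchin equation alone only forces to be harmonic). The one point worth flagging is that the displayed $r=2$ equation with the constant $+\tfrac14$ presupposes $K_g\equiv-1$; in general that term is $-\tfrac14 K_g$, exactly as your derivation produces.
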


\begin{df}
A solution $(w_1,\cdots, w_r)$ to the Toda system (\ref{eq;20.9.21.1}) is said to be real if $w_i+w_{r+1-i}=0$ for $i=1,\cdots, r$.
\end{df}

Denote $n=[\frac{r}{2}]$. Then the real solution to the Toda system is given by

\begin{eqnarray}\label{todathmPreliminary}
\triangle_{g}w_1&=&e^{2w_1}|q|^2_{g}-e^{-w_1+w_2}-\frac{r-1}{4}K_g,\nonumber\\
\triangle_{g}w_i&=&e^{-w_{i-1}+w_i}-e^{-w_i+w_{i+1}}-\frac{r+1-2i}{4}K_g,\quad 2\leq i\leq n-1,\\
\triangle_{g}w_n&=&e^{-w_{n-1}+w_n}-e^{-(2n+2-r)w_n}-\frac{r+1-2n}{4}K_g\nonumber.
\end{eqnarray}

We also consider the subcyclic Higgs bundles in the Hitchin component, 
which have the following form:
\[E=K^{\frac{r-1}{2}}\oplus K^{\frac{r-3}{2}}\oplus \cdots \oplus K^{\frac{3-r}{2}}\oplus K^{\frac{1-r}{2}}, \quad \phi=\begin{pmatrix}&&&q&0\\1&&&&q\\&1&&&\\&&\ddots&&\\&&&1&\end{pmatrix},\]
where $q$ is a holomorphic $(r-1)$-differential on $\Sigma.$ Similar to the cyclic case, the diagonal harmonic metric $h(g,\vecw)$ gives a variant Toda system as follows:
\begin{eqnarray}\label{subtoda}
\triangle_{g}w_1&=&e^{w_1+w_2}|q|^2_{g}-e^{-w_1+w_2}+\frac{r-1}{4},\nonumber\\
\triangle_{g}w_2&=&e^{w_1+w_2}|q|^2_{g}+e^{-w_1+w_2}-e^{-w_2+w_3}+\frac{r-3}{4},\nonumber\\
\triangle_{g}w_i&=&e^{-w_{i-1}+w_i}-e^{-w_i+w_{i+1}}+\frac{r+1-2i}{4},\quad 3\leq i\leq r-2,\\
\triangle_{g}w_{r-1}&=&e^{-w_{r-2}+w_{r-1}}-e^{w_1+w_2}|q|^2_{g}-e^{-w_{r-1}+w_r}+\frac{3-r}{4},\nonumber\\
\triangle_{g}w_r&=&e^{-w_{r-1}+w_r}-e^{w_1+w_2}|q|^2_{g}+\frac{1-r}{4}.\nonumber
\end{eqnarray}
If the solution is real, i.e. $w_i+w_{r+1-i}=0$ for $i=1,\cdots, r$, then the variant Toda system becomes
\begin{eqnarray}\label{vtoda}
\triangle_{g}w_1&=&e^{w_1+w_2}|q|^2_{g}-e^{-w_1+w_2}-\frac{r-1}{4}K_g,\nonumber\\
\triangle_{g}w_2&=&e^{w_1+w_2}|q|^2_{g}+e^{-w_1+w_2}-e^{-w_2+w_3}-\frac{r-3}{4}K_g,\nonumber\\
\triangle_{g}w_i&=&e^{-w_{i-1}+w_i}-e^{-w_i+w_{i+1}}-\frac{r+1-2i}{4}K_g,\quad 3\leq i\leq n-1,\\
\triangle_{g}w_n&=&e^{-w_{n-1}+w_n}-e^{-(2n+2-r)w_n}-\frac{r+1-2n}{4}K_g\nonumber.
\end{eqnarray}
where $n=[\frac{r}{2}]$.

For both the system (\ref{eq;20.9.21.1}) and (\ref{subtoda}), denote $$g(h)_{i}=e^{-w_{i}+w_{i+1}}g, \quad i=1,\cdots,r-1.$$ We consider the complete solution to the system (\ref{eq;20.9.21.1}) or (\ref{subtoda}) in the following sense.
\begin{df}
A solution $(w_1,\cdots, w_r)$ to the system (\ref{eq;20.9.21.1}) or (\ref{subtoda}) is complete if the metrics $g(h)_{i}$, $i=1,\cdots,r-1$ are complete.
\end{df}
In terms of harmonic metrics, it is equivalent to the condition that on $\Sigma$ the K\"ahler metrics $h_{K^{(r+1-2(i+1))/2}}\otimes h_{K^{(r+1-2i)/2}}^{-1}$ are complete, which is independent of the background K\"ahler metric $g$.

The complete solution uniquely exists and satisfies some estimates. For the Toda system, it is from \cite{LiMochizuki}.
\begin{thm}(\cite[Theorem 1.3 and 1.6]{LiMochizuki})\label{LM1}
Let $r\geq 2$. Let $q$ be a holomorphic $r$-differential on $\Sigma$. Let $g$ be a K\"ahler metric on $\Sigma$.
Then there is a unique complete solution $(w_1,\cdots, w_r)\in C^{\infty}(\Sigma)$ to the Toda system (\ref{eq;20.9.21.1}). Moreover, it is real.
Furthermore, $w_i$'s satisfy
$$\frac{(i-1)(r-i+1)}{i(r-i)}\leq\frac{e^{-w_{i-1}+w_{i}}}{e^{-w_{i}+w_{i+1}}}<1,~i=1,\cdots,n,$$
unless $\Sigma$ is parabolic and $q$ has no zeros.
Here $w_0=-w_1-\log|q|_{g}^2$, $w_{n+1}=-(2n+1-r)w_n$.
\end{thm}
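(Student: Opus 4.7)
The plan is to combine an exhaustion/approximation argument for existence with Cheng-Yau maximum-principle techniques for uniqueness, realness, and the ratio estimate.

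For existence, I would exhaust $\Sigma$ by relatively compact subdomains $\Omega_1 \subset \Omega_2 \subset \cdots$ with smooth boundary. On each $\Omega_n$, solve the Dirichlet problem for the Toda system (\ref{eq;20.9.21.1}) with boundary data $w_i = R_n$ on $\partial \Omega_n$ for $R_n \to \infty$, using the standard sub/supersolution iteration: a supersolution is furnished by lifting the decoupled Liouville-type model which is valid where $q$ is nonvanishing, and a subsolution is obtained by a suitable truncation. Iteration in the linearization produces a smooth solution $\vecw^{(n)}$ on $\Omega_n$. The crucial uniform $C^0$ and $C^k$ estimates on compact subsets, independent of $n$, arise by applying Simpson's inequality (Lemma \ref{SimpsonEstimate}) to the natural holomorphic sections of $\End(E)$ provided by the cyclic structure, together with maximum-principle arguments. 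This yields a subsequential limit that solves the system on all of $\Sigma$; completeness of the associated metrics $g(h)_i$ follows from the divergence $R_n \to \infty$ of boundary data.

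Uniqueness is established via the Cheng-Yau maximum principle (Lemma \ref{Cheng-Yau}). Given two complete solutions $\vecw$ and $\vecw'$, set $u_i := w_i - w_i'$; using the structure of (\ref{eq;20.9.21.1}) one derives a cooperative system of differential inequalities whose envelope $U := \max_i u_i$ satisfies a scalar inequality of the form $\triangle_g U \geq c (e^{U} - 1)$. Completeness of $g(h)_i$ provides the lower Ricci bound required to invoke Cheng-Yau, forcing $U \leq 0$; swapping the roles of $\vecw$ and $\vecw'$ yields $\vecw = \vecw'$. The realness $w_i + w_{r+1-i} = 0$ is then immediate: the involution $(w_1,\ldots,w_r) \mapsto (-w_r,\ldots,-w_1)$ sends complete solutions of (\ref{eq;20.9.21.1}) to complete solutions of the same system (reflecting the symmetry of the cyclic Hitchin section), and by uniqueness it fixes $\vecw$.

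For the ratio estimate, set $v_i := -w_{i-1}+w_i$, with the boundary conventions $w_0 = -w_1 - \log|q|_g^2$ and $w_{n+1} = -(2n+1-r)w_n$. Taking differences of consecutive Toda equations produces a discrete Laplacian-type system
\begin{equation*}
\triangle_g v_i \;=\; e^{v_{i-1}} - 2e^{v_i} + e^{v_{i+1}} + \tfrac{1}{2} K_g.
\end{equation*}
The upper bound $e^{v_i}/e^{v_{i+1}} < 1$, i.e.\ $v_i < v_{i+1}$, follows by applying Cheng-Yau to $v_i - v_{i+1}$: were it to attain a nonnegative supremum, the resulting inequality would force equality identically, and a separate analysis identifies the equality case with the parabolic/nowhere-vanishing scenario. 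The lower bound comes from comparison with the explicit flat model over $\cnum$ where $q$ is a nonzero constant: a direct computation shows the model solution has ratios exactly $e^{v_i}/e^{v_{i+1}} = \frac{(i-1)(r-i+1)}{i(r-i)}$, and applying the maximum principle to $v_i - v_{i+1} - \log\frac{(i-1)(r-i+1)}{i(r-i)}$ delivers the estimate.

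The main technical obstacle is securing uniform a priori estimates during the exhaustion near the zeros of $q$, where the decoupled fiducial model degenerates and no obvious barrier is available. One must construct local barriers that encode the correct blow-up rate of the harmonic metric near such zeros, and combine them with Simpson's subharmonicity estimates for cyclic Higgs bundles to obtain $C^k$ bounds on compact subsets uniform in $n$. Once these estimates are in hand, the existence limit together with the maximum-principle arguments above completes the proof cleanly.
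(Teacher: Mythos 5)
This statement is not proved in the paper at all: it is imported verbatim from \cite[Theorems 1.3 and 1.6]{LiMochizuki} and used as a black box, so there is no internal proof to compare against. Judged on its own merits, your outline (Dirichlet exhaustion with divergent boundary data plus a priori estimates for existence, Cheng--Yau for uniqueness and realness, maximum principle against model solutions for the ratio bounds) is the right family of ideas and is broadly the strategy Li--Mochizuki actually follow. However, two concrete points would derail the sketch as written.

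First, you have misidentified the comparison model for the lower bound. The flat model on $\cnum$ with $q$ a nonvanishing constant has \emph{all} ratios $e^{-w_{i-1}+w_i}/e^{-w_i+w_{i+1}}$ equal to $1$ --- that is exactly the excluded equality case ``$\Sigma$ parabolic and $q$ without zeros'' which saturates the \emph{upper} bound. The constants $\tfrac{(i-1)(r-i+1)}{i(r-i)}$ are the ratios of the Fuchsian solution $q\equiv 0$, where $e^{-w_i+w_{i+1}}$ is $\tfrac{i(r-i)}{2}$ times the hyperbolic density; note also that for $i=1$ the constant is $0$, consistent with $f_0=e^{2w_1}|q|^2$ vanishing there. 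A maximum-principle argument for $v_i-v_{i+1}-\log\tfrac{(i-1)(r-i+1)}{i(r-i)}$ run against your flat model cannot produce these constants; the choice of the Fuchsian constants is precisely what makes the coupled maximum principle over all $i$ close up. Second, the uniform difference equation you write for $v_i=-w_{i-1}+w_i$ is wrong at the ends of the chain: for $i=1$ one has $v_1=2w_1+\log|q|_g^2$, so the equation involves $\triangle_g\log|q|_g^2=\tfrac{r}{2}K_g$ only away from the zeros of $q$, and the coefficient structure is $2f_0-2f_1$ rather than $f_0-2f_1+f_2$ (compare the case-by-case formulas in Section \ref{EquationSystem} of this paper). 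Since the boundary conventions $w_0$, $w_{n+1}$ enter exactly at $i=1$ and $i=n$, these are not cosmetic corrections. Finally, ``completeness of $g(h)_i$ follows from $R_n\to\infty$'' needs an actual lower barrier near the ideal boundary, and the Cheng--Yau step for uniqueness must be run relative to the complete metrics $g(h)_i$ (whose curvature lower bounds must be verified), since $g$ itself is not assumed complete.
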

For the variant Toda system, it is from \cite{Sagman}.
\begin{thm}(\cite{Sagman})\label{varianttoda1}
Let $r\geq 3$. Let $q$ be a holomorphic $(r-1)$-differential on $\Sigma$. Let $g$ be a K\"ahler metric on $\Sigma$.
Then there is a unique complete solution $(w_1,\cdots, w_r)\in C^{\infty}(\Sigma)$ to the variant Toda system (\ref{subtoda}). Moreover, it is real.
Furthermore, $w_i$'s satisfy
$$\frac{e^{-w_{i-1}+w_{i}}}{e^{-w_{i}+w_{i+1}}}<1,~i=1,\cdots,n,\text{ and }e^{-w_0+w_1}+e^{-w_1+w_2}<e^{-w_2+w_3},$$
unless $\Sigma$ is parabolic and $q$ has no zeros.
Here $w_0=-w_2-\log|q|_{g}^2$, $w_{n+1}=-(2n+1-r)w_n$.
\end{thm}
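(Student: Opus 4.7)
The plan is to adapt the exhaustion-and-limit scheme of Li--Mochizuki used in Theorem \ref{LM1} to the variant Toda system (\ref{subtoda}). I would fix an increasing exhaustion $\Omega_1 \Subset \Omega_2 \Subset \cdots$ of $\Sigma$ by smooth relatively compact subdomains and first solve, on each $\Omega_k$, the Dirichlet problem for (\ref{subtoda}) with a boundary condition chosen so that each metric $g(h)_i$ blows up at $\partial\Omega_k$ (e.g.\ by prescribing $w_i - w_{i+1}$ comparable to a large multiple of $\log d(\cdot,\partial\Omega_k)^{-1}$). Because the nonlinearities $e^{-w_{i-1}+w_i}$ and $e^{w_1+w_2}|q|_g^2$ are convex in $\vecw$ while the $K_g$ terms are linear, the Dirichlet functional on each $\Omega_k$ is strictly convex and admits a unique smooth minimizer $\vecw^{(k)}$ solving (\ref{subtoda}) on $\Omega_k$.

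To pass to the limit, I would construct global sub- and supersolutions of (\ref{subtoda}) on $\Sigma$ by plugging in, component by component, the cyclic Toda solutions provided by Theorem \ref{LM1} (with appropriate shifts), and then use a system-valued comparison argument, applied to each component separately in the spirit of Simpson's maximum principle and exploiting the sign structure in (\ref{subtoda}), to sandwich $\vecw^{(k)}$ between them uniformly in $k$. Standard elliptic regularity then gives locally uniform $C^{m,\alpha}$ bounds, and a diagonal subsequence yields a smooth solution $\vecw$ on all of $\Sigma$, automatically complete from the blow-up of the boundary data. Uniqueness among complete solutions follows by applying the Cheng--Yau maximum principle (Lemma \ref{Cheng-Yau}) to $\max_i(w_i - w'_i)$ for two such solutions $\vecw,\vecw'$: subtracting the equations and linearizing produces a differential inequality that, combined with completeness and the a priori bounds on curvature of the background metric, forces $\vecw \equiv \vecw'$. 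Reality is then automatic, since the variant system is invariant under the involution $(w_1,\ldots,w_r)\mapsto(-w_r,\ldots,-w_1)$ (one checks this row by row, noting that the $K_g$-coefficient $(r+1-2i)/4$ is antisymmetric in $i\leftrightarrow r+1-i$ and that the coupling terms respect the swap), so uniqueness forces the solution to be fixed by it.

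For the monotonicity estimates, set $u_i := e^{-w_{i-1}+w_i}$ with the convention $w_0 = -w_2 - \log|q|_g^2$ and $w_{n+1} = -(2n+1-r)w_n$, and let $v_i := \log u_i - \log u_{i+1}$. Using (\ref{subtoda}), a direct computation should yield an inequality of the schematic form $\triangle_g v_i \geq \alpha_i (e^{v_i}-1)$ with $\alpha_i > 0$ wherever $v_i > 0$; the Cheng--Yau maximum principle then forces $v_i \leq 0$, and the strong maximum principle upgrades this to strict inequality $v_i < 0$ unless $\Sigma$ is parabolic and $q$ has no zeros, the degenerate case in which equality can hold globally. For the sharper inequality $u_1 + u_2 < u_3$, I would study $\triangle_g \log(u_1+u_2) - \triangle_g \log u_3$ using the first three rows of (\ref{subtoda}) in combination; the first two rows should be added before taking logarithms so that the mixed term $e^{w_1+w_2}|q|_g^2$ is absorbed, producing an elliptic inequality of the same convex form amenable to Cheng--Yau.

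The main obstacle is this last step: the variant system (\ref{subtoda}) breaks the cyclic symmetry in its first two rows because of the non-diagonal coupling $e^{w_1+w_2}|q|_g^2$, so the clean algebraic identities that power the Li--Mochizuki ratio calculation in Theorem \ref{LM1} are no longer directly available. One must find the correct combination of the first two rows, heuristically adding them before taking ratios, that still yields an elliptic inequality with a favorable sign. The fact that the inequality takes the form $u_1+u_2<u_3$ rather than $u_1<u_2$ reflects precisely this structural modification, and verifying that the modified ratio still satisfies a scalar elliptic inequality to which the Cheng--Yau / strong maximum principle can be applied is the technical heart of the argument.
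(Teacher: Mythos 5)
The first thing to say is that the paper contains no proof of Theorem \ref{varianttoda1}: it is imported verbatim from \cite{Sagman}, exactly as the cyclic analogue Theorem \ref{LM1} is imported from \cite{LiMochizuki}, so there is no internal argument to compare your proposal against. Your sketch is a plausible reconstruction of the general Li--Mochizuki/Sagman strategy (exhaustion by relatively compact domains, Dirichlet problems with boundary blow-up, comparison with sub/supersolutions, elliptic estimates and a diagonal limit, uniqueness of the complete solution via maximum principles), and the observation that the inequalities one can hope to prove take the form $e^{-w_0+w_1}+e^{-w_1+w_2}<e^{-w_2+w_3}$ precisely because the first two rows must be combined before taking ratios is the right structural insight.

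That said, two of your steps do not go through as written. First, the reality argument: the involution $(w_1,\ldots,w_r)\mapsto(-w_r,\ldots,-w_1)$ is \emph{not} a row-by-row symmetry of (\ref{subtoda}) as printed. Applying it to the last equation and comparing with the first forces $e^{w_1+w_2}|q|_g^2=e^{-w_{r-1}-w_r}|q|_g^2$, i.e.\ $w_1+w_2+w_{r-1}+w_r=0$, which is part of the conclusion, not a hypothesis. (In the unreduced Hitchin equation the two coupling terms $e^{w_1-w_{r-1}}|q|_g^2$ and $e^{w_2-w_r}|q|_g^2$ appear separately and are swapped by the involution, so the symmetry is genuine there; but then reality is really a consequence of the self-duality of the subcyclic Higgs bundle together with uniqueness of the complete harmonic metric, which is how \cite{Sagman} and \cite{LiMochizuki} obtain it, rather than of a naive invariance of the scalar system.) Second, the uniqueness step via Cheng--Yau applied to $\max_i(w_i-w_i')$ is stated without specifying with respect to which complete metric the Laplacian comparison is performed; for these systems completeness is a property of the derived metrics $g(h)_i$, not of the background $g$, and arranging a background to which Lemma \ref{Cheng-Yau} applies is one of the genuinely delicate points of the existence/uniqueness theory. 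Neither objection invalidates the overall plan, but both are places where the proposal currently asserts rather than proves.
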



\section{Bounded differentials and harmonic maps 
}\label{General_r}
\subsection{Single equation}
We first consider the case of a single equation. For the geometric applications, we consider a broader class of equations than (\ref{harmonic}). Let $(\Sigma,g)$ be a Riemann surface with a K\"ahler metric $g$. Consider
\begin{eqnarray}\label{ab}
\triangle_{g}w=-\kappa(e^{aw}|q|^2_{g}-e^{-bw})-cK_g,
\end{eqnarray}
where $a,b,c$ are positive constants, $\kappa$ is a function on $\Sigma$ satisfying $-C_1\leq \kappa\leq -C_2$ for some constant $C_1\geq C_2>0$.

\begin{lem}\label{n=1}
Let $w$ be a solution to Equation (\ref{ab}). Suppose $e^{-bw}g$ is complete. Then $|q|^2_{g}e^{(a+b)w}<1$.
Furthermore if $bc=\frac{1}{2}$, which is the situation for $r=2,3$ in the Toda system (\ref{todathmPreliminary}) and for $r=3$ in the variant Toda system (\ref{vtoda}), then $K_{e^{-bw}g}<0$.
\end{lem}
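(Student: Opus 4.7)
The plan is to work with $\phi := |q|^2_g e^{(a+b)w}$ and the complete auxiliary metric $\tilde g := e^{-bw}g$, prove $\phi < 1$ by a maximum-principle argument on $(\Sigma,\tilde g)$, and deduce the curvature claim from the standard conformal-change formula. The preliminary computation is that, away from the zeros of $q$,
\[
\triangle_g\log\phi \;=\; (a+b)\triangle_g w + \tfrac{d}{2}K_g,
\]
where $d$ is the tensor degree of $q$. Substituting Equation (\ref{ab}) and using the structural identity $(a+b)c = d/2$, which holds in every setting where the lemma is invoked (the $r=2,3$ cyclic Toda and $r=3$ subcyclic Toda systems), the $K_g$ contributions cancel and leave
\[
\triangle_{\tilde g}\log\phi \;=\; -(a+b)\kappa\,(\phi - 1) \;\geq\; (a+b)C_2\,(\phi - 1).
\]
The pointwise identity $\triangle_{\tilde g}\phi = \phi\triangle_{\tilde g}\log\phi + \phi|\nabla\log\phi|^2_{\tilde g} \geq \phi\triangle_{\tilde g}\log\phi$ then yields the quadratic differential inequality
\[
\triangle_{\tilde g}\phi \;\geq\; (a+b)C_2\,\phi(\phi - 1),
\]
which extends smoothly across the zeros of $q$ (isolated local minima of $\phi$).

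Since the right-hand side grows quadratically at infinity and $\tilde g$ is complete, the Cheng-Yau maximum principle (Lemma \ref{Cheng-Yau}) applied to $\phi$ yields $\sup\phi \leq 1$; strict inequality $\phi < 1$ then follows from the Hopf strong maximum principle, as $\phi(P_0)=1$ at an interior point would force $\phi \equiv 1$ globally, hence $w = -\log|q|^2_g/(a+b)$, and the resulting conformal factor for $\tilde g$ is incompatible with its completeness on $\Sigma$. For the second assertion, the standard conformal-change formula gives
\[
K_{\tilde g} \;=\; e^{bw}\bigl[(1-2bc)K_g - 2b\kappa\,e^{-bw}(\phi-1)\bigr],
\]
so when $bc = \tfrac12$ the $K_g$ term drops out and $K_{\tilde g} = -2b\kappa(\phi-1) < 0$, using $\kappa \leq -C_2 < 0$, $b > 0$, and the first part $\phi < 1$.

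The main technical obstacle is applying Cheng-Yau cleanly: its hypothesis requires a lower bound on the Ricci (here Gaussian) curvature of $\tilde g$, yet $K_{\tilde g}$ is itself controlled by $\phi$ via the formula above, so the needed a priori bound is initially circular. I expect to resolve this either through a compact exhaustion of $\Sigma$ -- obtaining the estimate on subdomains where $\phi$ is automatically bounded and then passing to the limit -- or via a Keller-Osserman/Omori-Yau variant adapted to the quadratic nonlinearity on complete surfaces without an a priori Ricci bound.
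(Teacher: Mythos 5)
Your route is the one the paper itself takes: the same logarithmic Bochner computation for $\phi=|q|^2_ge^{(a+b)w}$ with the $K_g$-terms cancelling via the structural identity $(a+b)c=r/2$ (which you are right to make explicit — the paper uses it silently), the same quadratic differential inequality, Cheng--Yau plus the strong maximum principle for $\phi<1$, and the conformal-change formula for the curvature claim. The one place where your write-up is not yet a proof is precisely the point you flag as ``the main technical obstacle'': verifying the curvature lower bound on $\tilde g=e^{-bw}g$ required by Lemma \ref{Cheng-Yau}. There is no circularity here and no need for a compact exhaustion or an Omori--Yau variant. From the formula you already wrote down,
\[
K_{\tilde g}=(1-2bc)\,e^{bw}K_g-2b\kappa(\phi-1)=(1-2bc)\,e^{bw}K_g+(-2b\kappa)\,\phi+2b\kappa,
\]
and since $\kappa\leq-C_2<0$ and $\phi\geq0$, the term $(-2b\kappa)\phi$ is \emph{nonnegative} and can simply be discarded, giving $K_{\tilde g}\geq 2b\kappa+(1-2bc)e^{bw}K_g\geq-2bC_1$ when $bc=\tfrac12$ (the case covering all the paper's applications). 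No a priori control on $\phi$ is needed; this one-line sign observation is exactly how the paper licenses the Cheng--Yau step, and it is the only idea missing from your argument.

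Two smaller points. First, your displayed bound $\triangle_{\tilde g}\log\phi\geq(a+b)C_2(\phi-1)$ fails where $\phi<1$ (there $-\kappa\geq C_2$ works in the wrong direction); keep the inequality in the form $\triangle_{\tilde g}\phi\geq-(a+b)\kappa\,\phi(\phi-1)$, which is all Cheng--Yau needs since only the behavior for large $\phi$ matters. Second, your exclusion of $\phi\equiv1$ by ``incompatibility with completeness'' does not hold up: if $q$ is nowhere vanishing, $\phi\equiv1$ makes $\tilde g$ a flat metric determined by $q$, which can perfectly well be complete (this is the parabolic exceptional case appearing in Theorem \ref{LM1}); if $q$ has a zero, $\phi$ vanishes there and $\phi\equiv1$ is excluded trivially. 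The paper instead traces the equality case back through the Bochner inequalities to force $w$ constant and hence a contradiction with $c>0$ in Equation (\ref{ab}); whichever exit you choose, this step needs a genuine argument rather than the one-sentence dismissal you give it.
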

\begin{proof}
Denote $g=\ti{g}(z)dz\otimes d\bar{z}$. Notice that for a Riemannian metric $g$ and a smooth function $f$,
\begin{eqnarray}\label{1}
\triangle_g e^f=e^f(\triangle_g f+|\nabla_g f|^2_g)\geq e^f\triangle_g f.
\end{eqnarray}
Then outside the zeros of $q$, we have
\begin{eqnarray}\label{10}
\triangle_g (|q|^2_g e^f)=\triangle_g e^{f+\log |q|^2_g}\geq |q|^2_g e^f(\triangle_g f+\triangle_g \log |q|^2_g).
\end{eqnarray}
Recall locally, $\triangle_g=\frac{1}{g}\partial_{z}\partial_{\bar{z}}$, $|q|^2_{g}=\frac{q\bar{q}}{g^r}$. Since $q$ is holomorphic, we have $$\triangle_{g} \log |q|^2_{g}=-r\triangle_{g} \log \tilde{g}=\frac{r}{2}K_g.$$
Then
\begin{eqnarray}\label{2}
\triangle_{g} (|q|^2_{g} e^f)\geq |q|^2_{g} e^f(\triangle_{g} f+\frac{r}{2}K_g).
\end{eqnarray}
outside the zeros of $q$. By the continuity, it holds everywhere. So for $u=|q|^2_{g} e^{(a+b)w}-1$,
\begin{eqnarray}\label{11}
\triangle_{e^{-bw}g}u&=&\triangle_{e^{-bw}g} (|q|^2_{g} e^{(a+b)w})\\
&\geq& |q|^2_{g}e^{(a+b)w}((a+b)\triangle_{e^{-bw}g} w+(a+b)cK_g)\nonumber\\
&=&|q|^2_{g} e^{(a+b)w}((a+b)(-\kappa(|q|^2_{g}e^{(a+b)w}-1)-cK_g))\nonumber\\
&=&-(a+b)\kappa|q|^2_{g} e^{(a+b)w}(|q|^2_{g}e^{(a+b)w}-1)\nonumber\\
&=&-(a+b)\kappa u(u+1).\nonumber
\end{eqnarray}

The Gaussian curvature of the metric $e^{-bw}g$ is
\begin{eqnarray}
K_{e^{-bw}g}&=&\frac{-2}{e^{-bw}\ti{g}}\partial_z\partial_{\bar{z}}\log (e^{-bw}\ti{g})\\
&=& \frac{-2}{e^{-bw}}(\triangle_{g}(-bw)-\frac{K_g}{2})\nonumber\\
&=& \frac{-2}{e^{-bw}}\big(bK(e^{aw}|q|^2_{g}-e^{-bw})+bcK_g-\frac{K_g}{2}\big)\nonumber\\
&\geq& 2b\kappa+(bc-\frac{1}{2})K_g\nonumber\\
&\geq & C\nonumber,
\end{eqnarray}
for some constant $C$.
Since $e^{-bw}g$ is complete and has curvature bounded from below, then $|q|^2_{g}e^{(a+b)w}$ has an upper bound from the Cheng-Yau maximum principle (Lemma \ref{Cheng-Yau}).

Then by the strong maximum principle, either $|q|^2_{g}e^{(a+b)w}<1$ or $|q|^2_{g}e^{(a+b)w}\equiv 1$. If $|q|^2_{g}e^{(a+b)w}\equiv 1$, then the equality holds in (\ref{11}), which implies the equality in (\ref{10}) and (\ref{1}). So $w$ is a constant. Then $c=0$ from Equation (\ref{ab}), contradiction. So $|q|^2_{g}e^{(a+b)w}<1$.

If $bc=\frac{1}{2}$, then
\begin{eqnarray}\label{-bw}
K_{e^{-bw}g}
&=& \frac{-2}{e^{-bw}}\big(bK(e^{aw}|q|^2_{g}-e^{-bw})+bcK_g-\frac{K_g}{2}\big)\nonumber\\
&=& -2b\kappa(e^{(a+b)w}|q|^2_{g}-1)\nonumber\\
&<& 0.
\end{eqnarray}
\end{proof}
Wan \cite{WAN} showed some results on the boundedness for the equation (\ref{harmonic}), i.e. the case $r=2$. For the geometric applications, we consider the broader class of equations (\ref{ab}). The proof is similar. For the convenience of the readers, we include the proof. We will discuss the geometric applications in Section \ref{r=2} and Section \ref{r=3}. From Remark \ref{equiv}, we may assume $(\Sigma,g)=(\mb{D},g_{\mb{D}})$ or $(\mb{C},g_{\mb{C}})$. Since on $\mb{C}$, bounded holomorphic differentials are less interesting, for simplicity, we assume $(\Sigma,g)=(\mb{D},g_{\mb{D}})$.
\begin{prop}\label{n=1 bd}
Let $w$ be a solution to the equation (\ref{ab}) on $(\mb{D},g_{\mb{D}})$. We further assume $bc=\frac{1}{2}$, which is the situation for $r=2,3$ in the Toda system (\ref{todathmPreliminary}) and for $r=3$ in the variant Toda system (\ref{vtoda}). Suppose $e^{-bw}g_{\mb{D}}$ is complete. Then the followings are equivalent:

(1) $|q|_{g_{\mb{D}}}$ is bounded.

(2) $|w|$ is bounded.

(3) There is a constant $C>0$ such that $|q|^2_{g_{\mb{D}}}e^{aw}+e^{-bw}\leq C$.

(4) There is a constant $\delta>0$ such that the curvature of the metric $e^{-bw}g_{\mb{D}}$ satisfying $K_{e^{-bw}g_{\mb{D}}}\leq -\delta.$

(5) There is a constant $\delta>0$ such that $|q|^2_{g_{\mb{D}}}e^{(a+b)w}\leq 1-\delta$.
\end{prop}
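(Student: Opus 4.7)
My plan is to establish the five equivalences via the cycle $(1)\Rightarrow(2)\Rightarrow(3)\Rightarrow(1)$ together with $(2)\Rightarrow(5)\Leftrightarrow(4)\Rightarrow(1)$, using Lemma \ref{n=1}, the Cheng--Yau maximum principle (Lemma \ref{Cheng-Yau}), the key Lemma \ref{Wan}, and Lemma \ref{KC}. Set $P := 1 - |q|^2_{g_{\mb{D}}} e^{(a+b)w}$ and $\tilde g := e^{-bw} g_{\mb{D}}$. Lemma \ref{n=1} gives $P>0$, and a short calculation using the hypothesis $bc = \tfrac{1}{2}$ yields $K_{\tilde g} = 2b\kappa P \in [-2bC_1, 0)$; this identity makes (4)$\Leftrightarrow$(5) immediate. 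The implication (2)$\Rightarrow$(3) uses $|q|^2 e^{aw} < e^{-bw}$ from Lemma \ref{n=1}, and (3)$\Rightarrow$(1) follows since $e^{-bw}\leq C$ forces $w$ to be bounded below, so $|q|^2 \leq C e^{-aw}$ is uniformly controlled.

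For (1)$\Rightarrow$(2) I would apply Lemma \ref{Cheng-Yau} twice. First, on the complete manifold $(\mb{D}, \tilde g)$ whose curvature is bounded below by $-2bC_1$, the equation rewrites as $\triangle_{\tilde g} w = |\kappa|(|q|^2 e^{(a+b)w} - 1) + c e^{bw} \geq -C_1 + c e^{bw}$; exponential growth in $w$ together with Cheng--Yau (taking $g(t) = e^{bt/2}$) forces $\sup w < \infty$, in fact even without hypothesis (1). Second, on $(\mb{D}, g_{\mb{D}})$ with $v := e^{-bw}$, using $|q|^2 \leq M$ from (1) and the upper bound on $w$ from the first step, a direct computation gives $\triangle_{g_{\mb{D}}} v \geq b C_2 v^2 - C' v$ for a suitable constant $C'$, and Cheng--Yau with $g(t) = t^{3/2}$ yields $\sup v < \infty$, i.e.\ $w$ is bounded below.

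The heart of the proof, and where I expect the main obstacle, is the step (2)$\Rightarrow$(5), which is the Wan-type argument. I would compute $\triangle_{g_{\mb{D}}} \log(|q|^2_{g_{\mb{D}}} e^{(a+b)w}) = \tfrac{m}{2} K_{g_{\mb{D}}} + (a+b) \triangle_{g_{\mb{D}}} w$, substitute equation (\ref{ab}), and exploit the structural identity $(a+b)c = \tfrac{m}{2}$ (where $m$ is the order of the differential $q$; this identity holds in each Toda setting to which the Proposition is applied and is the geometric reason for the hypothesis $bc = \tfrac{1}{2}$) to cancel all constant contributions and arrive at $\triangle_{\tilde g} P \leq (a+b) C_1 P$. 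Under (2), $\tilde g$ is uniformly equivalent to $g_{\mb{D}}$, and $u := P$ is mutually bounded with $-K_{\tilde g} = 2b|\kappa| P$, so the conformal form of Lemma \ref{Wan} (which does not require a lower curvature bound) applies and yields $P \geq \delta_1 > 0$, which is precisely (5). Verifying this differential inequality, and in particular the cancellation of the $K_{g_{\mb{D}}}$ contributions, is the delicate calculation of the proof.

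The cycle closes via (5)$\Rightarrow$(4)$\Rightarrow$(1): Lemma \ref{KC} applied with $g_0 = g_{\mb{D}}$ and $g = \tilde g$ gives $e^{-bw} \leq 1/\delta_2$, so $w$ is bounded below, and then (5) forces $|q|^2 \leq (1-\delta_1) e^{-(a+b)w}$ to be uniformly bounded.
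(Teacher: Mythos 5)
Your proposal is correct and follows essentially the same route as the paper: Lemma \ref{n=1} for the a priori bound $|q|^2_{g_{\mb{D}}}e^{(a+b)w}<1$ and the curvature identity, Cheng--Yau applied twice for the upper and lower bounds on $w$ in (1)$\Rightarrow$(2), and Lemma \ref{Wan} applied to $u=1-|q|^2_{g_{\mb{D}}}e^{(a+b)w}$ on the complete metric $e^{-bw}g_{\mb{D}}$ for the key implication into (4)/(5); the minor rearrangement of the cycle (closing via Lemma \ref{KC} rather than a direct Cheng--Yau estimate on $-w$) is immaterial. Your explicit isolation of the identity $(a+b)c=\tfrac{m}{2}$, which the paper uses only implicitly when it writes $(a+b)cK_g$ for $\tfrac{r}{2}K_g$ in the proof of Lemma \ref{n=1}, is a welcome clarification rather than a deviation.
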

\begin{proof}
First we notice that $w$ has an upper bound automatically. In fact,
\begin{\eq}
\triangle_{e^{-bw}g_{\mb{D}}}w&=&\frac{1}{e^{-bw}}\triangle_{g_{\mb{D}}}w=-K(e^{(a+b)w}|q|^2_{g_{\mb{D}}}-1)+ce^{bw}\geq ce^{bw}-C_1,
\end{\eq}
since $e^{-bw}g_{\mb{D}}$ is complete, from the Cheng-Yau maximum principle Lemma \ref{Cheng-Yau}, $w$ has an upper bound.

(1)$\Rightarrow$(2): We only need to show $w$ has a lower bound. In fact, suppose $|q|^2_{g_{\mb{D}}}\leq C$, then
$$\triangle_{g_{\mb{D}}}(-w)=K(e^{-a(-w)}|q|^2_{g_{\mb{D}}}-e^{b(-w)})-c\geq C_2e^{b(-w)}-C_1Ce^{-a(-w)}-c.$$
Then from the Cheng-Yau maximum principle Lemma \ref{Cheng-Yau}, $-w$ has an upper bound. So $w$ has a lower bound.

(2)$\Rightarrow$(4):
From Lemma \ref{n=1}, $K_{e^{-bw}g_{\mb{D}}}<0$.
Let $u=1-|q|^2_{g_{\mb{D}}}e^{(a+b)w}$, from $(\ref{11})$,
\begin{\eq}
\triangle_{e^{-bw}g_{\mb{D}}}u=\frac{-1}{e^{-bw}}\triangle_{g_{\mb{D}}}e^{(a+b)w}|q|^2_{g_{\mb{D}}}\leq K(a+b)|q|^2_{g_{\mb{D}}} e^{(a+b)w}(|q|^2_{g_{\mb{D}}}e^{(a+b)w}-1)=-K(a+b)|q|^2_{g_{\mb{D}}} e^{(a+b)w}u.
\end{\eq}
By condition (2), there exists a constant $C<0$ such that $\triangle_{e^{-bw}g_{\mb{D}}}u\leq Cu$. Since $e^{-bw}g_{\mb{D}}$ is complete and negative curved, we obtain $K_{e^{-bw}g_{\mb{D}}}\leq -\delta$ for some $\delta>0$ from Lemma \ref{Wan}.

(4)$\Leftrightarrow$(5): It is obvious from formula (\ref{-bw}).

(5)$\Rightarrow$(2): We only need to show $w$ has a lower bound. In fact,
$$\triangle_{g_{\mb{D}}}(-w)=-K(1-e^{(a+b)w}|q|^2_{g_{\mb{D}}})e^{-bw}-c\geq C_2\delta e^{b(-w)}-c.$$
Then from the Cheng-Yau maximum principle, $w$ has a lower bound.

(2)$\Rightarrow$(3): It follows from Lemma \ref{n=1}.

(3)$\Rightarrow$(1): From the assumption, we have $|q|^2_{g_{\mb{D}}}e^{aw}\leq C$, $e^{-bw}\leq C$. So $w$ has a lower bound. Then $|q|^2_{g_{\mb{D}}}$ is bounded. We finish the proof.
\end{proof}

\subsection{Equation system}\label{EquationSystem}
Now consider the Toda system (\ref{todathmPreliminary}) and the variant Toda system (\ref{vtoda}). Since for lower $r$, the formulae cannot be written in a general form, we calculate the formulae case by case.

For the Toda system (\ref{todathmPreliminary}), let $f_0=e^{2w_1}|q|_g^2$, $f_i=e^{-w_i+w_{i+1}}$, $i=0, \cdots, r$, where $w_0=-w_1-\log |q|_g^2=-w_{r+1}$. Notice that $\triangle_g\log|q|_g^2=\frac{r}{2}K_g.$
\\
For $r=2$, the Toda system (\ref{todathmPreliminary}) implies
\begin{eqnarray*}
\triangle_g\log f_0&=&2f_0-2f_{1}+\frac{1}{2}K_g,\text{ outsides the zeros of }q,\\
\triangle_g\log f_1&=&2f_1-2f_0+\frac{1}{2}K_g.
\end{eqnarray*}
For $r=3$, the Toda system (\ref{todathmPreliminary}) implies
\begin{eqnarray*}
\triangle_g\log f_0&=&2f_0-2f_{1}+\frac{1}{2}K_g,\text{ outsides the zeros of }q,\\
\triangle_g\log f_1&=&f_1-f_0+\frac{1}{2}K_g.
\end{eqnarray*}
For $r\geq 4,$ the Toda system (\ref{todathmPreliminary}) implies
\begin{eqnarray*}
\triangle_g\log f_0&=&2f_0-2f_{1}+\frac{1}{2}K_g,\text{ outsides the zeros of }q,\\
\triangle_g\log f_i&=&2f_i-f_{i-1}-f_{i+1}+\frac{1}{2}K_g,\quad i=1,\cdots, r-1.
\end{eqnarray*}

For the variant Toda system (\ref{vtoda}), let $f_0=e^{w_1+w_2}|q|^2$, $f_i=e^{-w_i+w_{i+1}}$, $i=0, \cdots, r$, where $w_0=-w_2-\log |q|_g^2=-w_{r+1}$.
\\
For $r=3$, the variant Toda system (\ref{vtoda}) implies
\begin{eqnarray*}
\triangle_g\log f_0&=&f_0-f_1+\frac{1}{2}K_g,\text{ outsides the zeros of }q\\
\triangle_g\log f_1&=&f_1-f_0+\frac{1}{2}K_g.
\end{eqnarray*}
For $r=4$, the variant Toda system (\ref{vtoda}) implies
\begin{eqnarray*}
\triangle_g\log f_0&=&2f_0-f_2+\frac{1}{2}K_g,\text{ outsides the zeros of }q\\
\triangle_g\log f_1&=&2f_1-f_2+\frac{1}{2}K_g,\\
\triangle_g\log f_2&=&2f_2-2f_0-2f_1+\frac{1}{2}K_g.
\end{eqnarray*}
For $r=5$, the variant Toda system (\ref{vtoda}) implies
\begin{eqnarray*}
\triangle_g\log f_0&=&2f_0-f_2+\frac{1}{2}K_g,\text{ outsides the zeros of }q\\
\triangle_g\log f_1&=&2f_1-f_2+\frac{1}{2}K_g,\\
\triangle_g\log f_2&=&f_2-f_0-f_1+\frac{1}{2}K_g.
\end{eqnarray*}
For $r\geq 6$, the variant Toda system (\ref{vtoda}) implies
\begin{eqnarray*}
\triangle_g\log f_0&=&2f_0-f_2+\frac{1}{2}K_g,\text{ outsides the zeros of }q\\
\triangle_g\log f_1&=&2f_1-f_2+\frac{1}{2}K_g,\\
\triangle_g\log f_2&=&2f_2-f_0-f_1-f_3+\frac{1}{2}K_g,\\
\triangle_g\log f_i&=&2f_i-f_{i-1}-f_{i+1}+\frac{1}{2}K_g,\quad i=3, \cdots, r-1.
\end{eqnarray*}
Notice that for both (\ref{todathmPreliminary}) and (\ref{vtoda}), $f_{i}=e^{-w_{i}+w_{i+1}}=e^{w_{r+1-i}-w_{r-i}}=f_{r-i}$, for $0\leq i \leq r$.

\begin{lem}\label{Kn-1}
For the Toda system (\ref{todathmPreliminary}) and the variant Toda system (\ref{vtoda}), let $(w_1,\cdots, w_r)$ be a real solution. Denote $g(h)_{i}=e^{-w_{i}+w_{i+1}}g$, $i=1,\cdots,r-1,$ then the curvature $K_{g(h)_i}$ are bounded from below.

Moreover, there exist constant $c_1,c_2>0$ (maybe different in each case) such that
\begin{eqnarray*}
\triangle_{g(h)_{n}}K_{g(h)_{n}}&\geq&c_1K_{g(h)_{n}}(K_{g(h)_{n}}+c_2)\quad \text{for $r=2,3$ in (\ref{todathmPreliminary}), $r=3,4,5$ in (\ref{vtoda}).}\\
\triangle_{g(h)_{n-1}}K_{g(h)_{n-1}}&\geq&c_1K_{g(h)_{n-1}}(K_{g(h)_{n-1}}+c_2)\quad \text{for $r=4$ in (\ref{todathmPreliminary}), $r=7$ in (\ref{vtoda}).}
\end{eqnarray*}
Moreover, suppose $(w_1,\cdots, w_r)$ is a complete solution. Then
\begin{eqnarray*}
\triangle_{g(h)_{n}}K_{g(h)_{n}}&\geq&c_1K_{g(h)_{n}}(K_{g(h)_{n}}+c_2) \quad \text{for $r\geq 2$ in (\ref{todathmPreliminary}), $r\geq 3$ in (\ref{vtoda})}.\\
\triangle_{g(h)_{n-1}}K_{g(h)_{n-1}}&\geq&c_1K_{g(h)_{n-1}}(K_{g(h)_{n-1}}+c_2) \quad \text{for $r\geq4$, $r\neq 5$ in (\ref{todathmPreliminary}), $r\geq 6$ in (\ref{vtoda})}.\label{CurvatureEquation2}
\end{eqnarray*}
\end{lem}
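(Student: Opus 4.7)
The strategy is a direct computation carried out case by case, using the equations for $\triangle_g\log f_i$ collected in Section~\ref{EquationSystem}. First, the conformal change-of-curvature formula gives
$K_{g(h)_i}=\tfrac{1}{f_i}\bigl(K_g-2\triangle_g\log f_i\bigr).$
Each of the listed equations for $\triangle_g\log f_i$ contains the additive term $+\tfrac{1}{2}K_g$, so upon substitution the $K_g$ contribution cancels exactly, leaving $K_{g(h)_i}$ as an intrinsic rational expression in the ratios $f_j/f_i$. For (\ref{todathmPreliminary}) with $r\geq 4$ this yields $K_{g(h)_i}=2\bigl(\tfrac{f_{i-1}}{f_i}+\tfrac{f_{i+1}}{f_i}-2\bigr)$, and analogous short formulas appear in the low-$r$ cases and in the variant-Toda cases (e.g.\ $K_{g(h)_1}=4(f_0/f_1-1)$ for $r=2$ in (\ref{todathmPreliminary}), and $K_{g(h)_2}=4\bigl((f_0+f_1)/f_2-1\bigr)$ for $r=4$ in (\ref{vtoda})). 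The lower bound on $K_{g(h)_i}$ then follows from the symmetry $f_{r-i}=f_i$ together with the universal upper bound $f_{i-1}/f_i<1$ (and a matching positive lower bound when the solution is complete) from Theorems~\ref{LM1} and \ref{varianttoda1}.

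Next, to derive the differential inequality $\triangle_{g(h)_n}K_{g(h)_n}\geq c_1K_{g(h)_n}(K_{g(h)_n}+c_2)$, I introduce an auxiliary positive function $u$ built from ratios $f_j/f_i$ so that $K_{g(h)_n}$ is an affine function of $u$ after using the symmetry $f_{r-i}=f_i$. In the prototype case $r=2$, take $u=f_0/f_1=e^{4w_1}|q|_g^2$; then $\triangle_g\log u=4\triangle_g w_1+\tfrac{r}{2}K_g$, and after substituting the Toda system the $K_g$ terms cancel, leaving $\triangle_g\log u=4(f_0-f_1)=4f_1(u-1)$. The elementary estimate $\triangle_g u\geq u\triangle_g\log u$ and division by $f_n$ then give $\triangle_{g(h)_1}u\geq 4u(u-1)$, and rewriting $u$ in terms of $K_{g(h)_1}$ delivers precisely $\triangle_{g(h)_1}K_{g(h)_1}\geq K_{g(h)_1}(K_{g(h)_1}+4)$. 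The general case follows the same template: choose $u$ so that $\triangle_g\log u$ is a linear combination of $f_j$'s (the $K_g$ contributions always cancel between the $\tfrac{r}{2}K_g$ from $\triangle_g\log|q|_g^2$ and the $\tfrac{1}{2}K_g$ in the system), convert to $\triangle_{g(h)_n}$, and use the ratio bounds of Theorems~\ref{LM1} and \ref{varianttoda1} to pin the remaining coefficients in an interval bounded away from $0$ and $1$, translating the polynomial inequality in $u$ into the desired quadratic in $K_{g(h)_n}$. The $K_{g(h)_{n-1}}$ inequality in the special cases $r=4$ of (\ref{todathmPreliminary}) and $r=7$ of (\ref{vtoda}) is handled identically, with one extra ratio to carry.

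The main obstacle is the case-by-case algebraic bookkeeping. The shape of each $\triangle_g\log f_i$ equation depends on whether $i$ is a ``corner'' index (where $f_0$ involving $|q|_g^2$ enters and the equation has only two $f$-terms) or an ``interior'' one (where three $f$-terms appear), and the variant-Toda system additionally couples rows $1$ and $2$ through the common term $e^{w_1+w_2}|q|_g^2=f_0$. This forces the auxiliary function $u$ and the constants $c_1,c_2$ to differ from case to case; verifying that in every listed case the final polynomial lower bound retains the form $c_1K(K+c_2)$ with $c_1,c_2>0$ requires the two-sided ratio estimate from Theorems~\ref{LM1} and \ref{varianttoda1} in the complete-solution cases, and a separate direct argument using only the cancellation of $K_g$ terms in the small-$r$ cases where completeness is not assumed.
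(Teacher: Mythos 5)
Your plan follows essentially the same route as the paper's proof: compute $K_{g(h)_i}$ explicitly from the conformal change formula (with the $\tfrac12 K_g$ terms cancelling), apply $\triangle u\geq u\,\triangle\log u$ to the ratios $f_j/f_i$, substitute the Toda equations, and invoke the monotonicity estimates of Theorems \ref{LM1} and \ref{varianttoda1} only in the complete-solution cases; your prototype $r=2$ computation matches the paper's exactly. One small correction: the lower bound on $K_{g(h)_i}$ needs neither the symmetry $f_{r-i}=f_i$ nor the ratio bound $f_{i-1}/f_i<1$ (which in any case is stated only for complete solutions) — it is immediate from positivity of the $f_j$'s in the explicit formulas, e.g.\ $K_{g(h)_i}=2\bigl(\tfrac{f_{i-1}+f_{i+1}}{f_i}-2\bigr)\geq -4$.
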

\begin{proof}
Locally, denote $g=\tilde{g}(z)dz\otimes d\bar{z}$. We first consider the Toda system (\ref{todathmPreliminary}).
\\
For $r=2$,
\begin{eqnarray*}
K_{g(h)_1}=-\frac{2}{f_1\tilde{g}}\partial_z\partial_{\bar{z}}\log(f_1\tilde{g})=4(\frac{f_0}{f_1}-1).
\end{eqnarray*}
Then
\begin{eqnarray*}
\triangle_{g(h)_1}K_{g(h)_1}
&=&4\triangle_{g(h)_1}\frac{f_{0}}{f_1}\\
&\geq& 4\frac{f_{0}}{f_1}\triangle_{g(h)_1}\log\frac{f_{0}}{f_1}\text{\quad outsides the zeros of $q$}\\
&=&16\frac{f_{0}}{f_{1}^2}(f_0-f_1)\\
&=&K_{g(h)_1}(K_{g(h)_1}+4).
\end{eqnarray*}
Since $q$ is holomorphic, its zeros are discrete, then from the continuity the inequality holds everywhere.
\\
For $r=3$, similarly
\begin{eqnarray*}
&&K_{g(h)_1}=-\frac{2}{f_1\tilde{g}}\partial_z\partial_{\bar{z}}\log(f_1\tilde{g})=2(\frac{f_0}{f_1}-1).\\
&&\triangle_{g(h)_1}K_{g(h)_1}
=2\triangle_{g(h)_1}\frac{f_{0}}{f_1}
\geq 2\frac{f_{0}}{f_1}\triangle_{g(h)_1}\log\frac{f_{0}}{f_1}
=6\frac{f_{0}}{f_{1}^2}(f_0-f_1)
=\frac{3}{2}K_{g(h)_1}(K_{g(h)_1}+2).
\end{eqnarray*}
For $r=4$, notice that $f_3=f_1$,
\begin{eqnarray*}
K_{g(h)_1}&=&-\frac{2}{f_1\tilde{g}}\partial_z\partial_{\bar{z}}\log(f_1\tilde{g})=2(\frac{f_0+f_2}{f_1}-2).\\
\triangle_{g(h)_1}K_{g(h)_{1}}&=&2\triangle_{g(h)_1}(\frac{f_{0}}{f_1}+\frac{f_{2}}{f_1})\\
&\geq& 2(\frac{f_{0}}{f_1}\triangle_{g(h)_1}\log\frac{f_{0}}{f_1}+\frac{f_{2}}{f_1}\triangle_{g(h)_1} \log \frac{f_{2}}{f_1})\\
&=&2(\frac{f_{0}}{f_{1}^2}(3f_0-4f_1+f_2)+\frac{f_{2}}{f_1^2}(f_0-3f_1+3f_2-f_1))\\
&=&2(3f_0^2-4f_0f_1+f_0f_2-4f_1f_2+3f_2^2+f_0f_2)/f_1^2\\
&\geq&2(2(f_0+f_2)^2-4(f_0+f_2)f_1)/f_1^2\\
&=&K_{g(h)_1}(K_{g(h)_1}+4).
\end{eqnarray*}

Now we consider the variant Toda system (\ref{vtoda}) similarly.\\
For $r=3$,
\begin{eqnarray*}
&&K_{g(h)_1}=-\frac{2}{f_1\tilde{g}}\partial_z\partial_{\bar{z}}\log(f_1\tilde{g})=2(\frac{f_0}{f_1}-1).\\
&&\triangle_{g(h)_1}K_{g(h)_1}
=2\triangle_{g(h)_1}\frac{f_{0}}{f_1}
\geq 2\frac{f_{0}}{f_1}\triangle_{g(h)_1}\log\frac{f_{0}}{f_1}
=4\frac{f_{0}}{f_{1}^2}(f_0-f_1)
=K_{g(h)_1}(K_{g(h)_1}+2).
\end{eqnarray*}
For $r=4$,
\begin{eqnarray*}
K_{g(h)_2}&=&-\frac{2}{f_2\tilde{g}}\partial_z\partial_{\bar{z}}\log(f_2\tilde{g})=4\big(\frac{f_0+f_1}{f_2}-1\big).\\
\triangle_{g(h)_2}K_{g(h)_2}
&=&4\triangle_{g(h)_2}(\frac{f_{0}}{f_2}+\frac{f_{1}}{f_2})\\
&\geq& 4(\frac{f_{0}}{f_2}\triangle_{g(h)_2}\log\frac{f_{0}}{f_2}+\frac{f_{1}}{f_2}\triangle_{g(h)_2} \log \frac{f_{1}}{f_2})\\
&=& 4(\frac{f_{0}}{f_{2}^2}(4f_0+2f_1-3f_2)+\frac{f_{1}}{f_2^2}(4f_1-3f_2+2f_0))\\
&=&4(4f_0^2+2f_0f_1-3f_0f_2+4f_1^2-3f_1f_2+2f_0f_1)/f_2^2\\
&\geq&4(3(f_0+f_1)^2-3(f_0+f_1)f_2)/f_2^2\\
&=&\frac{3}{4}K_{g(h)_2}(K_{g(h)_2}+4).
\end{eqnarray*}
And
\begin{eqnarray*}
&&K_{g(h)_1}=-\frac{2}{f_1\tilde{g}}\partial_z\partial_{\bar{z}}\log(f_1\tilde{g})=2(\frac{f_2}{f_1}-2).\\
&&\triangle_{g(h)_1}K_{g(h)_1}
=2\triangle_{g(h)_1}\frac{f_{2}}{f_1}
\geq 2\frac{f_{2}}{f_1}\triangle_{g(h)_1}\log\frac{f_{2}}{f_1}
=2\frac{f_{2}}{f_{1}^2}(-2f_0-4f_1+3f_2).
\end{eqnarray*}
Suppose the solution is complete, then from Theorem \ref{varianttoda1}, $f_2>f_1+f_0.$ So
\begin{\eq}
\triangle_{g(h)_1}K_{g(h)_1}\geq 2\frac{f_{2}}{f_{1}^2}(-2f_1+f_2)=\frac{1}{2}K_{g(h)_1}(K_{g(h)_1}+4).
\end{\eq}
For $r=5$,
\begin{eqnarray*}
K_{g(h)_2}&=&-\frac{2}{f_2\tilde{g}}\partial_z\partial_{\bar{z}}\log(f_2\tilde{g})=2\big(\frac{f_0+f_1}{f_2}-1\big).\\
\triangle_{g(h)_2}K_{g(h)_2}
&=&2\triangle_{g(h)_2}(\frac{f_{0}}{f_2}+\frac{f_{1}}{f_2})\\
&\geq& 2(\frac{f_{0}}{f_2}\triangle_{g(h)_2}\log\frac{f_{0}}{f_2}+\frac{f_{1}}{f_2}\triangle_{g(h)_2} \log \frac{f_{1}}{f_2})\\
&=& 2(\frac{f_{0}}{f_{2}^2}(3f_0+f_1-2f_2)+\frac{f_{1}}{f_2^2}(3f_1-2f_2+f_0))\\
&=&2(3f_0^2+f_0f_1-2f_0f_2+3f_1^2-2f_1f_2+f_0f_1)/f_2^2\\
&\geq&2(2(f_0+f_1)^2-2(f_0+f_1)f_2)/f_2^2\\
&=&K_{g(h)_2}(K_{g(h)_2}+2).
\end{eqnarray*}
And
\begin{eqnarray*}
&&K_{g(h)_1}=-\frac{2}{f_1\tilde{g}}\partial_z\partial_{\bar{z}}\log(f_1\tilde{g})=2(\frac{f_2}{f_1}-2).\\
&&\triangle_{g(h)_1}K_{g(h)_1}
=2\triangle_{g(h)_1}\frac{f_{2}}{f_1}
\geq 2\frac{f_{2}}{f_1}\triangle_{g(h)_1}\log\frac{f_{2}}{f_1}
=2\frac{f_{2}}{f_{1}^2}(-f_0-3f_1+2f_2).
\end{eqnarray*}
Suppose the solution is complete, then from Theorem \ref{varianttoda1}, $f_2>f_1+f_0.$ So
\begin{\eq}
\triangle_{g(h)_1}K_{g(h)_1}\geq 2\frac{f_{2}}{f_{1}^2}(-2f_1+f_2)=\frac{1}{2}K_{g(h)_1}(K_{g(h)_1}+4).
\end{\eq}
For $r=6$, $r=7$,
\begin{eqnarray*}
K_{g(h)_2}&=&-\frac{2}{f_2\tilde{g}}\partial_z\partial_{\bar{z}}\log(f_2\tilde{g})=2\big(\frac{f_0+f_1+f_3}{f_2}-2\big).\\
\triangle_{g(h)_2}K_{g(h)_2}
&=&2\triangle_{g(h)_2}(\frac{f_{0}}{f_2}+\frac{f_{1}}{f_2}+\frac{f_3}{f_2})\\
&\geq& 2(\frac{f_{0}}{f_2}\triangle_{g(h)_2}\log\frac{f_{0}}{f_2}+\frac{f_{1}}{f_2}\triangle_{g(h)_2} \log \frac{f_{1}}{f_2}+\frac{f_{3}}{f_2}\triangle_{g(h)_2} \log \frac{f_{3}}{f_2})\\
&=& 2(\frac{f_{0}}{f_{2}^2}(3f_0+f_1+f_3-3f_2)+\frac{f_{1}}{f_2^2}(3f_1-3f_2+f_0+f_3)+\frac{f_3}{f_2^2}(f_0+f_1-3f_2+3f_3-f_4))\\
&=&2(3f_0^2+3f_1^2+2f_0f_1+2f_0f_3+2f_1f_3-3f_0f_2-3f_1f_2+3f_3^2-3f_2f_3-f_3f_4)/f_2^2\\
&=&2(2f_0^2+2f_1^2+(f_0+f_1)^2+2(f_0+f_1)f_3-3(f_0+f_1+f_3)f_2+3f_3^2-f_3f_4)/f_2^2\\
&=&2(2f_0^2+2f_1^2+2f_3^2+(f_0+f_1+f_3)^2-3(f_0+f_1+f_3)f_2-f_3f_4)/f_2^2\\
&\geq&2(\frac{3}{2}(f_0+f_1+f_3)^2-3(f_0+f_1+f_3)f_2+f_3^2-f_3f_4)/f_2^2\\
&=&\frac{3}{4}K_{g(h)_2}(K_{g(h)_2}+4)+2\frac{f_3^2-f_3f_4}{f_2^2}
\end{eqnarray*}
For $r=6$, we have $f_4=f_2$. Suppose the solution is complete, then from Theorem \ref{LM1}, $f_2<f_3$. So $$\triangle_{g(h)_2}K_{g(h)_2}\geq \frac{3}{4}K_{g(h)_2}(K_{g(h)_2}+4).$$
For $r=7$, we have $f_4=f_3$. So $$\triangle_{g(h)_2}K_{g(h)_2}\geq \frac{3}{4}K_{g(h)_2}(K_{g(h)_2}+4).$$
\\
For $r=6,7,8,9$, suppose the solution is complete, then from Theorem \ref{varianttoda1}, $f_0+f_1<f_2$, $f_4\geq f_5$, then
\begin{eqnarray*}
K_{g(h)_3}&=&-\frac{2}{f_3\tilde{g}}\partial_z\partial_{\bar{z}}\log(f_3\tilde{g})=2\big(\frac{f_2+f_4}{f_3}-2\big).\\
\triangle_{g(h)_3}K_{g(h)_3}
&=&2\triangle_{g(h)_3}(\frac{f_{2}}{f_3}+\frac{f_{4}}{f_3})\\
&\geq& 2(\frac{f_{2}}{f_3}\triangle_{g(h)_3}\log\frac{f_{2}}{f_3}+\frac{f_{4}}{f_3}\triangle_{g(h)_3} \log \frac{f_{4}}{f_3})\\
&=& 2(\frac{f_{2}}{f_{3}^2}(-f_0-f_1+3f_{2}-3f_{3}+f_{4})-\frac{f_{4}}{f_3^2}(-f_{2}+3f_3-3f_{4}+f_{5}))\\
&\geq&2(2f_{2}^2+3f_{4}^2+2f_{2}f_{4}-3f_{2}f_3-3f_{4}f_3-f_{4}f_{5})/f_3^2\\
&=&2(f_{2}+f_{4}-3f_3+\frac{f_{2}^2+f_{4}^2}{f_{2}+f_{4}}+\frac{f_4^2-f_4f_5}{f_{2}+f_{4}})\cdot \frac{f_{2}+f_{4}}{f_3^2}\\
&\geq&2(\frac{3}{2}(f_{2}+f_{4})-3f_3)\cdot \frac{f_{2}+f_{4}}{f_3^2}+\frac{f_4^2-f_4f_5}{f_3^2}\\
&\geq&\frac{3}{4}K_{g(h)_3}(K_{g(h)_3}+4).
\end{eqnarray*}

Last we consider $r\geq 4$, $2\leq i\leq r-2$ for (\ref{todathmPreliminary}) and $r\geq 8$, $4\leq i\leq r-2$ for (\ref{vtoda}) simultaneously.
\begin{eqnarray*}
K_{g(h)_i}=-\frac{2}{f_i\tilde{g}}\partial_z\partial_{\bar{z}}\log(f_i\tilde{g})=-\frac{2}{f_i}\triangle_{g}\log f_i+\frac{1}{f_i}K_g=2\big(\frac{f_{i-1}+f_{i+1}}{f_i}-2\big).
\end{eqnarray*}
From inequality (\ref{1}), we have
\begin{eqnarray*}
\triangle_{g(h)_i}K_{g(h)_i}
&=&2\triangle_{g(h)_i}(\frac{f_{i-1}}{f_i}+\frac{f_{i+1}}{f_i})\\
&\geq& 2(\frac{f_{i-1}}{f_i}\triangle_{g(h)_i}\log\frac{f_{i-1}}{f_i}+\frac{f_{i+1}}{f_i}\triangle_{g(h)_i} \log \frac{f_{i+1}}{f_i})\\
&=& 2(\frac{f_{i-1}}{f_{i}^2}(-f_{i-2}+3f_{i-1}-3f_{i}+f_{i+1})-\frac{f_{i+1}}{f_i^2}(-f_{i-1}+3f_i-3f_{i+1}+f_{i+2}))\\
&=&2(3f_{i-1}^2+3f_{i+1}^2+2f_{i-1}f_{i+1}-3f_{i-1}f_i-3f_{i+1}f_i-f_{i-2}f_{i-1}-f_{i+1}f_{i+2})/f_i^2\\
&=&2(f_{i-1}+f_{i+1}-3f_i+\frac{f_{i-1}^2+f_{i+1}^2}{f_{i-1}+f_{i+1}}+\frac{f_{i-1}^2+f_{i+1}^2-f_{i-2}f_{i-1}-f_{i+1}f_{i+2}}{f_{i-1}+f_{i+1}})\cdot \frac{f_{i-1}+f_{i+1}}{f_i^2}\\
&\geq&2(\frac{3}{2}(f_{i-1}+f_{i+1})-3f_i+\frac{f_{i-1}^2+f_{i+1}^2-f_{i-2}f_{i-1}-f_{i+1}f_{i+2}}{f_{i-1}+f_{i+1}})\cdot \frac{f_{i-1}+f_{i+1}}{f_i^2}\\
&=&\frac{3}{4}K_{g(h)_i}(K_{g(h)_i}+4)+2\frac{f_{i-1}^2+f_{i+1}^2-f_{i-2}f_{i-1}-f_{i+1}f_{i+2}}{f_i^2}
\end{eqnarray*}
For $i=n$, suppose the solution is complete then from Theorem \ref{LM1} $f_{n-1}>f_{n-2}$ and $f_{n+1}>f_{n+2}$, then
\begin{eqnarray*}
\triangle_{g(h)_{n}}K_{g(h)_{n}}&\geq&\frac{3}{4}K_{g(h)_{n}}(K_{g(h)_{n}}+4)
+2\frac{f_{n-1}^2+f_{n+1}^2-f_{n-2}f_{n-1}-f_{n+1}f_{n+2}}{f_{n}^2}\\
&\geq&\frac{3}{4}K_{g(h)_{n}}(K_{g(h)_{n}}+4).
\end{eqnarray*}
For $i=n-1$, $r\geq 6$ in (\ref{todathmPreliminary}), $r\geq 10$ in (\ref{vtoda}), suppose the solution is complete then from Theorem \ref{LM1} $f_{n-2}>f_{n-3}$ and $f_{n}\geq f_{n+1}$, then
\begin{eqnarray*}
\triangle_{g(h)_{n-1}}K_{g(h)_{n-1}}&\geq&\frac{3}{4}K_{g(h)_{n-1}}(K_{g(h)_{n-1}}+4)
+2\frac{f_{n-2}^2+f_{n}^2-f_{n-3}f_{n-2}-f_nf_{n+1}}{f_{n-1}^2}\\
&\geq&\frac{3}{4}K_{g(h)_{n-1}}(K_{g(h)_{n-1}}+4).
\end{eqnarray*}

Lastly, the lower bound of $K_{g(h)_i}$ are obvious from their formulas.
\end{proof}
Recall from Section \ref{pre} a solution to the Toda system (\ref{todathmPreliminary}) or the variant Toda system (\ref{vtoda}) gives a harmonic map $f:\mathbb{D}\rightarrow N=SL(r,\mathbb C)/SU(r)$. (In fact, the image of $f$ lies $SL(r,\mathbb R)/SO(r)$, which is a totally geodesic submanifold in $SL(r,\mathbb C)/SU(r)$.) If we suppose $r\geq 3$ in (\ref{todathmPreliminary}) and $r\geq 4$ in (\ref{vtoda}), then $f$ is immersed and conformal. We denote $e_f$ as the energy density, $g_f$ as the pullback metric and $K_{\sigma}^N$ as the sectional curvature of a tangent plane $\sigma$ in $TN$, where $\sigma$ is the image of the tangent map at a point on $\mathbb{D}$. By direct calculation (or see \cite{DaiLi16}), we have the formula:
\begin{lem}\label{gK}
\begin{eqnarray*}
&&e_f=2r\sum\limits_{i=0}^{r-1}g(h)_i,\quad g^{1,1}_f=e_f, \\
&&K_{\sigma}^N=-\frac{1}{2r}\frac{\sum\limits_{i=1}^{r}(f_
{i-1}-f_i)^2}{(\sum\limits_{i=1}^{r}f_i)^2} \text{ for (\ref{todathmPreliminary})}, \\ &&K_\sigma^N=-\frac{1}{2r}\frac{2(f_0-f_1)^2+2(f_0+f_1-f_2)^2+\sum\limits_{i=3}^{r-2}(f_{i-1}-f_i)^2}{(\sum\limits_{i=1}^{r}f_{i-1})^2}\text{ for (\ref{vtoda})}.
\end{eqnarray*}
\end{lem}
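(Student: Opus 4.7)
The plan is to carry out a direct matrix computation in the preferred holomorphic frame $E=\bigoplus_{i=1}^{r}K^{(r+1-2i)/2}$ with the diagonal harmonic metric $h=h(g,\vecw)$, under which $h_i=e^{w_i}\tilde g^{-(r+1-2i)/2}$ along the $i$th summand. Writing $\phi=\phi_z\,dz$, I would first record the adjoint entries $(\phi_z^{*_h})_{j,j+1}=h_{j+1}/h_j=e^{-w_j+w_{j+1}}\tilde g$ on the superdiagonal together with the analogous entries at the $q$-slot(s): position $(1,r)$ for the cyclic case, and positions $(1,r-1),(2,r)$ for the subcyclic case. All three formulas of the lemma are then read off from the diagonals of $\phi_z\phi_z^{*_h}$ and $[\phi_z,\phi_z^{*_h}]$.

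For the energy density, I would use $e(f)=2r\,\tr(\phi\phi^{*_h})$ from Section~\ref{pre}. Summing the diagonal of $\phi_z\phi_z^{*_h}$ produces one contribution per nonzero slot of $\phi_z$, each simplifying to some $f_i$ after combining ratios of $h_i$'s with $|dz|_g^2=\tilde g^{-1}$; this yields $e_f=2r\sum_{i=0}^{r-1}g(h)_i$. To obtain $g_f^{1,1}=e_f$, I would verify conformality: for $r\geq 3$ (cyclic) and $r\geq 4$ (subcyclic), the nonzero entries of $\phi_z$ and of $\phi_z^2$ have disjoint diagonal supports, so $\tr(\phi_z^2)\equiv 0$; hence the Hopf differential $2r\,\tr(\phi^2)$ vanishes and $f$ is conformal, giving $g_f=g_f^{1,1}=e_f$.

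For the sectional curvature I would use $f_*(\partial_x)=\phi_z+\phi_z^{*_h}$ and $f_*(\partial_y)=i(\phi_z-\phi_z^{*_h})$ as the spanning Hermitian endomorphisms of the tangent plane $\sigma$, so that $[f_*(\partial_x),f_*(\partial_y)]=-2i[\phi_z,\phi_z^{*_h}]$. A direct matrix-bracket computation shows that $[\phi_z,\phi_z^{*_h}]$ is diagonal, with $(i,i)$-entry $(f_{i-1}-f_i)\tilde g$ (cyclic indices, $f_r:=f_0$) in the Toda case, and with entries $(f_0-f_1)\tilde g$, $(f_0+f_1-f_2)\tilde g$, $(f_{i-1}-f_i)\tilde g$ for $3\leq i\leq r-2$, together with the mirror expressions at $i=r-1,r$ (obtained from the involution $f_i=f_{r-i}$ for real solutions) in the variant Toda case. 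Since the $h$-norm of a diagonal endomorphism reduces to $\sum_i|A_{ii}|^2$, and conformality forces $|f_*\partial_x|^2|f_*\partial_y|^2-\langle f_*\partial_x,f_*\partial_y\rangle^2=|f_*\partial_x|^4$, which is proportional to $\bigl(\sum_i f_i\bigr)^2\tilde g^2$, substitution into the symmetric-space formula $K^N(X,Y)=-\langle[X,Y],[X,Y]\rangle/(|X|^2|Y|^2-\langle X,Y\rangle^2)$ produces the stated expressions.

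The only real difficulty is bookkeeping: tracking powers of $\tilde g$ when converting matrix-trace computations to pointwise $(h,g)$-norms, and pinning down the overall factor $1/(2r)$, which reflects the ratio between the trace form and the Killing form on $\mathfrak{sl}(r,\mathbb R)$. No new ideas beyond those of \cite{DaiLi16} (which treats the cyclic case) are needed; the subcyclic case differs only in the additional $q$-slot structure at positions $(1,r-1)$ and $(2,r)$ and is handled by the same method.
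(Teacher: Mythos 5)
Your proposal is correct and is essentially the paper's own argument: the paper gives no written proof of Lemma \ref{gK} beyond ``by direct calculation (or see \cite{DaiLi16})'', and your frame-by-frame computation of $\phi_z\phi_z^{*_h}$, the vanishing of $\tr(\phi_z^2)$ for conformality, and the symmetric-space curvature formula applied to $[\phi_z,\phi_z^{*_h}]$ is exactly that direct calculation, extended to the two $q$-slots of the subcyclic case in the natural way.
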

If $r\geq 3$ in (\ref{todathmPreliminary}) or $r\geq 4$ in (\ref{vtoda}), then $f$ is conformal, the pullback metric $g_f$ coincides with the energy density $e_f$.

From Theorem \ref{LM1} and Theorem \ref{varianttoda1}, for a complete solution in (\ref{todathmPreliminary}) or a complete real solution in (\ref{vtoda}), the induced curvature of the associated harmonic map is strictly negative or constantly zero.
The latter case cannot happen since we consider Higgs bundles on the unit disk $\mathbb D.$

For the variant Toda system, we obtain the following estimate of $f_0/f_1$ under a weaker completeness condition.
\begin{lem}\label{easycase} Let $r\geq 4$ in (\ref{vtoda}). If $g(h)_1$ is complete, then $f_0/f_1=\frac{e^{w_1+w_2}|q|_g^2}{e^{-w_1+w_2}}<1$ or constantly $1$.
\end{lem}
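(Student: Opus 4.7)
The plan is to reduce the lemma to an application of the Cheng--Yau maximum principle (Lemma \ref{Cheng-Yau}) followed by the strong maximum principle, using the completeness of $g(h)_1 = f_1 g$ and the curvature-from-below estimate already established in Lemma \ref{Kn-1}. Note that $f_0/f_1 = e^{2w_1}|q|_g^2$, so the desired dichotomy is the analogue, for the variant Toda system, of the single-equation statement $|q|_g^2 e^{(a+b)w}<1$ proved in Lemma \ref{n=1}.

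First I would compute $\triangle_g \log(f_0/f_1)$ directly from the identities collected in Section \ref{EquationSystem}. For all $r \geq 4$ the relevant formulas are $\triangle_g \log f_0 = 2 f_0 - f_2 + \tfrac{1}{2} K_g$ (outside the zeros of $q$) and $\triangle_g \log f_1 = 2 f_1 - f_2 + \tfrac{1}{2} K_g$. Subtracting eliminates both $f_2$ and the curvature term, yielding
\[
\triangle_g \log(f_0/f_1) = 2(f_0 - f_1),
\]
which extends across the zeros of $q$ by continuity. The key inequality $\triangle_g e^{h} \geq e^{h} \triangle_g h$ then gives, setting $u := f_0/f_1 - 1$,
\[
\triangle_g(f_0/f_1) \geq (f_0/f_1)\, \triangle_g \log(f_0/f_1) = 2 f_1 (u+1) u,
\]
and dividing by $f_1$ converts this into the clean inequality
\[
\triangle_{g(h)_1} u \;\geq\; 2 u (u+1)
\]
on the complete Riemannian surface $(\mathbb{D}, g(h)_1)$.

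Second, I would apply the Cheng--Yau maximum principle to $u$. The hypothesis that $g(h)_1$ is complete is given, and the curvature of $g(h)_1$ is bounded from below by the general part of Lemma \ref{Kn-1} (which holds for every real solution, without requiring completeness). Taking $f(t) = 2 t(t+1)$, the growth is quadratic at $+\infty$, so conditions (i)--(iii) of Lemma \ref{Cheng-Yau} hold with $g(t) = t^{3/2}$. The principle yields $\sup u \leq 0$, i.e.\ $f_0/f_1 \leq 1$ everywhere.

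Finally, to promote this to the strict dichotomy, set $w := 1 - f_0/f_1 \geq 0$. The differential inequality above rearranges to
\[
\triangle_{g(h)_1} w \;\leq\; 2 f_1 (f_0/f_1)\, w,
\]
a linear elliptic inequality $(\triangle_{g(h)_1} - c(x)) w \leq 0$ with $c(x) = 2 f_1 (f_0/f_1) \geq 0$ and locally bounded coefficients. If $w$ vanishes at any interior point, the strong minimum principle forces $w \equiv 0$, i.e.\ $f_0/f_1 \equiv 1$; otherwise $w > 0$ everywhere, i.e.\ $f_0/f_1 < 1$ everywhere. The only non-routine step is the bookkeeping in subtracting the two Toda-type equations to produce an inequality of exactly the form to which Cheng--Yau applies; once that is in place, no further use of the completeness of the other metrics $g(h)_i$ (as in Theorem \ref{varianttoda1}) is required, which is what makes the hypothesis ``$g(h)_1$ complete'' sufficient.
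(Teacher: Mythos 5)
Your proof is correct and follows essentially the same route as the paper's: subtract the two Toda-type identities to get $\triangle_g\log(f_0/f_1)=2(f_0-f_1)$ away from the zeros of $q$, convert this into a differential inequality with respect to the complete metric $g(h)_1$, apply the Cheng--Yau maximum principle (with the curvature lower bound from Lemma \ref{Kn-1}), and finish with the strong maximum principle. The only cosmetic difference is that you apply Cheng--Yau to $u=f_0/f_1-1$ (quadratic nonlinearity, which also sidesteps the zeros of $q$) whereas the paper phrases the key inequality for $\log(f_0/f_1)$; both yield $f_0/f_1\leq 1$.
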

\begin{proof}
Away from zeros of $q$,  $\triangle_g\log (f_0/f_1)\geq 2(f_0-f_1),$
$$\triangle_g (f_0/f_1)\geq (f_0/f_1)\cdot\triangle_g \log(f_0/f_1) =2(f_0/f_1)(f_0-f_1).$$
Since both sides are smooth, it extends to the whole surface.
So we have $$\triangle_{g(h)_1}\log (f_0/f_1)=\triangle_{g\cdot f_1}\log (f_0/f_1)\geq 2(f_0/f_1-1),$$
Then by the Cheng-Yau maximum principle and the assumption that $g(h)_1$ is complete, we obtain $f_0/f_1\leq 1.$
By the strong maximum principle, either $f_0/f_1<1$ or $f_0/f_1\equiv 1.$
\end{proof}

Now we study the boundedness of the geometric objects about the harmonic map. For simplicity, we first assume $(\Sigma,g)=(\mb{D},g_{\mb{D}})$. For the upper bound, in \cite{LiMochizuki}, Li-Mochizuki showed that for any Higgs bundle with a harmonic metric, the boundedness of the spectrum of the Higgs field is equivalent to the upper boundedness of the energy density of the corresponding harmonic map. In particular, applying this theorem to the equation system (\ref{todathmPreliminary}) and (\ref{vtoda}), we obtain
\begin{thm}(\cite[Proposition 3.12]{LiMochizuki})\label{LM2}
Consider the Toda system (\ref{todathmPreliminary}) $r\geq 2$ and the variant Toda system (\ref{vtoda}) $r\geq 3$ on $(\mb{D},g_{\mb{D}})$. Then $q$ is bounded with respect to $g_{\mb{D}}$ if and only if $e_f=\sum\limits_{i=0}^{r-1}g(h)_i=(\sum\limits_{i=0}^{r-1}f_i)g_{\mb{D}}\leq Cg_{\mb{D}}$ for some constant $C$. Clearly they are also equivalent to $g(h)_i=f_ig_{\mb{D}}\leq Cg_{\mb{D}}$, $0\leq i\leq n$ for some constant $C$.
\end{thm}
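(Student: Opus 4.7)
The plan is to derive this theorem as a direct application of the general Li-Mochizuki result \cite[Proposition 3.12]{LiMochizuki}, which says that for a Higgs bundle $(E,\phi)$ with a harmonic metric $h$ over a Riemann surface equipped with a complete K\"ahler metric $g_0$ of Gaussian curvature bounded from below, the function $|\phi|_{h,g_0}$ is bounded if and only if every eigenvalue of $\phi$ is bounded from above with respect to $g_0$. We apply this with $g_0=g_{\mathbb D}$, which is complete with constant curvature $-1$, so the hypotheses on the background metric are automatic.

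First I would compute the spectrum of $\phi$ in the two cases at hand. For the cyclic Higgs bundle in the Hitchin section with $1$'s on the subdiagonal and $q$ in the upper-right entry, a direct expansion of $\det(\lambda I - \phi)$ gives the characteristic polynomial $\lambda^r - q$, so the eigenvalues are the $r$-th roots of $q$; hence their boundedness with respect to $g_{\mathbb D}$ is equivalent to the boundedness of $|q|_{g_{\mathbb D}}$. For the subcyclic case, a similar (but slightly longer) expansion shows that the characteristic polynomial has the form $\lambda \cdot P(\lambda^{r-1}, q)$ where $P$ is linear in its first argument, so the nonzero eigenvalues are $(r-1)$-th roots of a constant multiple of $q$; again boundedness of the eigenvalues with respect to $g_{\mathbb D}$ is equivalent to the boundedness of $|q|_{g_{\mathbb D}}$.

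Next I would unpack the norm $|\phi|^2_{h(g_{\mathbb D},\vec w),g_{\mathbb D}}$ using the diagonal form of the harmonic metric $h(g_{\mathbb D},\vec w) = \bigoplus_i e^{w_i}(g_{\mathbb D}^{-1})^{(r+1-2i)/2}$. Each nonzero entry of $\phi$ contributes one summand: the subdiagonal $1$'s contribute $e^{-w_i + w_{i+1}}=f_i$ for $i=1,\ldots,r-1$, and the top-right $q$ (or the two $q$'s in the subcyclic case, which are equal since the solution is real, giving $f_0 = f_r$ that we count once) contributes $f_0 = e^{2w_1}|q|^2_{g_{\mathbb D}}$ (cyclic) or $f_0 = e^{w_1+w_2}|q|^2_{g_{\mathbb D}}$ (subcyclic). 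Summing gives $|\phi|^2_{h,g_{\mathbb D}} = \sum_{i=0}^{r-1} f_i$, matching the formula for $e_f$ in Lemma \ref{gK}. Since each $f_i \geq 0$, boundedness of $\sum_i f_i$ is equivalent to the boundedness of each $f_i$ individually, which is precisely the condition $g(h)_i = f_i g_{\mathbb D}\leq C g_{\mathbb D}$ for each $i$.

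Combining these observations with Li-Mochizuki's theorem completes the proof: boundedness of $|q|_{g_{\mathbb D}}$ is equivalent to boundedness of the eigenvalues, which by \cite[Proposition 3.12]{LiMochizuki} is equivalent to boundedness of $|\phi|_{h,g_{\mathbb D}}$, which is equivalent to $e_f\leq Cg_{\mathbb D}$ and also to each $g(h)_i\leq Cg_{\mathbb D}$. There is no real obstacle here; the only nontrivial input is the cited Li-Mochizuki result, and the rest is a bookkeeping exercise, the mildly delicate point being the eigenvalue computation in the subcyclic case.
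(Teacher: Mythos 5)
Your proposal is correct and follows exactly the route the paper takes: the paper states this theorem as a direct consequence of \cite[Proposition 3.12]{LiMochizuki} (boundedness of the spectrum of $\phi$ is equivalent to boundedness of $|\phi|_{h,g_{\mathbb D}}$), and your eigenvalue computations ($\lambda^r-q$ in the cyclic case, $\lambda(\lambda^{r-1}-2q)$ in the subcyclic case) together with the diagonal-metric expansion $|\phi|^2_{h,g_{\mathbb D}}=\sum_i f_i$ are precisely the bookkeeping the paper leaves implicit. The only cosmetic point is that in the subcyclic case the two $q$-entries each contribute $f_0$, so the sum picks up $f_0$ twice, which of course does not affect any of the boundedness equivalences.
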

If $g(h)_i$ is complete, we have the following result.
\begin{prop}(\cite[Proposition 3.20]{LiMochizuki} for (\ref{todathmPreliminary}) and \cite{Sagman} for (\ref{vtoda}))\label{onecom}
Consider the Toda system (\ref{todathmPreliminary}) $r\geq 2$ and the variant Toda system (\ref{vtoda}) $r\geq 3$. Suppose $g(h)_{i}$ is complete for some $i\in\{1,\cdots,n\}$. Then there exists a constant $C>0$ such that $g(h)_j\leq Cg(h)_{i}$ for every $j=0,\cdots,n$ and $|q|_{g(h)_i}\leq C$.
\end{prop}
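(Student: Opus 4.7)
The plan is to apply the Cheng-Yau maximum principle (Lemma~\ref{Cheng-Yau}) on the Riemann surface $\mathbb{D}$ endowed with the K\"ahler metric $g(h)_i$. This is legitimate because $g(h)_i$ is complete by hypothesis, and by Lemma~\ref{Kn-1} its Gaussian curvature is bounded from below, hence so is its Ricci curvature.

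The strategy is to control each ratio $u_j := f_j/f_i$ separately via a Cheng-Yau-type inequality on $(\mathbb{D}, g(h)_i)$, and then to deduce the bound on $|q|_{g(h)_i}$ from an explicit formula in terms of the $u_k$. From the formulas in Section~\ref{EquationSystem}, a direct computation (using the interior equations, with obvious modifications at boundary indices) yields
\[
\triangle_g \log u_j \;=\; \bigl(2f_j - f_{j-1} - f_{j+1}\bigr) - \bigl(2f_i - f_{i-1} - f_{i+1}\bigr).
\]
Combined with inequality~(\ref{1}) applied to $u_j = e^{\log u_j}$, this gives
\[
\triangle_{g(h)_i} u_j \;\geq\; u_j\, \triangle_{g(h)_i} \log u_j \;=\; \frac{u_j}{f_i}\bigl[\triangle_g \log f_j - \triangle_g \log f_i\bigr].
\]
The plan is to bootstrap outward from $j = i$: at each stage, suppose the ratios at the two adjacent indices are already bounded by some constant $C'$. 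After rewriting in terms of $u_k$ and using $u_{j\pm 1} \leq C'$, the right-hand side takes the form $c\, u_j(u_j - d)$ for positive constants depending on $C'$, which is superlinear for large $u_j$. The Cheng-Yau maximum principle then yields $u_j \leq d$, closing the inductive step.

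The main obstacle is precisely this coupling of $u_j$ to its neighbors $u_{j\pm 1}$ in the differential inequality, so that a single scalar Cheng-Yau argument does not close. The bootstrap must be arranged so that at each stage the neighboring ratios in the direction of the induction have already been controlled. The key inputs are the strict inequalities $f_{k-1}/f_k < 1$ for $k = 1, \ldots, n$ from Theorems~\ref{LM1} and~\ref{varianttoda1} (applied at the anchor index $i$), the symmetry $f_k = f_{r-k}$ which halves the work by reflecting at $n$, and careful treatment of the modified boundary equations (at $j = 0$ in the cyclic case, and at $j=0,1,r-2,r-1$ in the subcyclic case).

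Finally, for the cyclic system the real-solution condition $w_k + w_{r+1-k} = 0$ combined with $f_0 = e^{2w_1}|q|_g^2$ and $f_k = e^{-w_k+w_{k+1}}$ gives the identity $\prod_{k=0}^{r-1} f_k = |q|_g^2$, so
\[
|q|_{g(h)_i}^2 \;=\; \frac{|q|_g^2}{f_i^r} \;=\; \prod_{k=0}^{r-1} u_k \;\leq\; C.
\]
An analogous monomial identity in the $w_k$'s applies in the subcyclic case, where $q$ is an $(r-1)$-differential, and reduces the bound on $|q|_{g(h)_i}$ to the bounds on the $u_k$'s already obtained.
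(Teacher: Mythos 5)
The paper does not actually prove Proposition \ref{onecom}: it is imported from \cite[Proposition 3.20]{LiMochizuki} and \cite{Sagman}, so there is no internal proof to compare against. Judged on its own terms, your proposal sets up the right framework --- the Cheng--Yau principle on the complete metric $g(h)_i$, whose curvature is bounded below by Lemma \ref{Kn-1}, applied to the ratios $u_j=f_j/f_i$ --- and the closing computation $|q|^2_{g(h)_i}=\prod_k u_k$ (with the appropriate modification for the $(r-1)$-differential in the subcyclic case) is correct. But the central step, actually bounding the $u_j$, is not carried out.

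There are two concrete problems. First, the bootstrap does not close. Your differential inequality reads $\triangle_{g(h)_i}u_j\geq u_j\bigl(2u_j-u_{j-1}-u_{j+1}-2+u_{i-1}+u_{i+1}\bigr)$, so to extract a superlinear lower bound in $u_j$ you need \emph{both} neighbours $u_{j\pm1}$ already controlled. Inducting outward from the anchor $i$, the neighbour on the far side of $j$ is exactly the one not yet controlled --- already at the first step $j=i\pm1$ --- and the modified equations at $j=0$ and the reflection $f_k=f_{r-k}$ at $j=n$ do not break this chain, since each end still couples to one uncontrolled interior neighbour. You correctly identify this as ``the main obstacle,'' but the sentence ``the bootstrap must be arranged so that the neighbouring ratios in the direction of the induction have already been controlled'' restates what is needed rather than supplying a mechanism. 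Second, the inequalities $f_{k-1}/f_k<1$ (and their companion lower bounds) from Theorems \ref{LM1} and \ref{varianttoda1} are not available under the hypotheses of Proposition \ref{onecom}: those estimates are proved for \emph{the} complete solution, i.e.\ the one for which all of $g(h)_1,\dots,g(h)_{r-1}$ are complete, whereas here only a single $g(h)_i$ is assumed complete --- which is precisely the situation the proposition is meant to address. (Note also that if the two-sided bounds of Theorem \ref{LM1} were available, $f_j\leq Cf_i$ would follow by telescoping those ratios with no PDE input at all; importing them makes the statement trivial because the imported hypothesis is stronger than the conclusion.) A correct argument has to treat the coupled system globally rather than index by index --- for example via a suitably weighted combination of the $\log(f_j/f_i)$ exploiting the tridiagonal structure --- and this is the content of the cited results of Li--Mochizuki and Sagman.
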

For the lower bound of the energy density, we have the following estimate.
\begin{lem}\label{LowerBound}
Consider the Toda system (\ref{todathmPreliminary}) $r\geq 2$ and the variant Toda system (\ref{vtoda}) $r\geq 3$ on $(\mb{D},g_{\mb{D}})$. Suppose $g(h)_i$ is complete for some $i\in\{1,\cdots,n\}$, then there exists a constant $C>0$ such that
$g(h)_i\geq C^{-1}g_{\mathbb D}$. In particular, there exists a constant $C$ such that $e_f\geq C^{-1}g_{\mathbb D}.$
\end{lem}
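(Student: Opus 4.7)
My plan is to translate the required lower bound on $g(h)_i$ into an upper bound on a conformal factor, and then to obtain that upper bound via the Cheng-Yau maximum principle on the complete manifold $(\mathbb{D}, g(h)_i)$.

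Concretely, I would set $u := -\log f_i$, so that $g_{\mathbb{D}} = e^{u}\, g(h)_i$. The conformal change of Gauss curvature (in the paper's sign conventions, where $\triangle_g = \tilde g^{-1}\partial_z\partial_{\bar z}$ and $K_g = -2\tilde g^{-1}\partial_z\partial_{\bar z}\log\tilde g$) combined with $K_{g_{\mathbb{D}}} = -1$ gives
$$\triangle_{g(h)_i} u \;=\; \tfrac{1}{2} K_{g(h)_i} + \tfrac{1}{2} e^{u}.$$
By Lemma \ref{Kn-1}, the explicit formulas exhibited there for $K_{g(h)_i}$ (of the form $c\cdot\!\sum_{j} f_{j}/f_i - \text{const}$) show that $K_{g(h)_i} \geq -H$ for some constant $H>0$ in every case at hand. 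Consequently
$$\triangle_{g(h)_i} u \;\geq\; \tfrac{1}{2}\bigl(e^{u} - H\bigr).$$

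The hypotheses of Cheng-Yau (Lemma \ref{Cheng-Yau}) are now in place: $(\mathbb{D}, g(h)_i)$ is complete by assumption and its Ricci (= Gauss) curvature is bounded from below. I would apply the principle with $F(t) = \tfrac{1}{2}(e^{t} - H)$; the exponential growth of $F$ comfortably fits the framework, e.g.\ with the comparison function $\varphi(t) = e^{t/2}$ all three conditions (i)-(iii) of Lemma \ref{Cheng-Yau} are immediate. We conclude that $u$ is bounded above and that $F(\sup u) \leq 0$, i.e.\ $e^{\sup u} \leq H$, hence $\sup u \leq \log H$. This yields $f_i = e^{-u} \geq H^{-1}$ on $\mathbb{D}$, which is exactly $g(h)_i \geq H^{-1}\, g_{\mathbb{D}}$. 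The ``in particular'' statement is then immediate from Lemma \ref{gK}: since all $f_j>0$,
$$e_f \;=\; 2r\Bigl(\sum_{j=0}^{r-1} f_j\Bigr)\, g_{\mathbb{D}} \;\geq\; 2r\, f_i\, g_{\mathbb{D}} \;\geq\; \tfrac{2r}{H}\, g_{\mathbb{D}}.$$

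I do not foresee a serious obstacle. The only two technical points requiring care are (i) correctly accounting for the paper's $\tfrac{1}{4}$-of-Riemannian Laplacian convention when writing the conformal change formula (which merely produces the coefficient $\tfrac{1}{2}$ above), and (ii) checking the growth conditions in the Cheng-Yau principle, which is routine for an exponential nonlinearity. The two global inputs used, namely completeness of $g(h)_i$ and the lower curvature bound, are supplied by the hypothesis and by Lemma \ref{Kn-1}, respectively; so the argument goes through uniformly for both the Toda system (\ref{todathmPreliminary}) and the variant Toda system (\ref{vtoda}), and for all admissible values of $r$.
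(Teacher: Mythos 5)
Your proof is correct and is essentially the paper's own argument: the paper likewise sets up a differential inequality $\triangle_{g(h)_i}\log f_i^{-1}\geq \tfrac{1}{2}f_i^{-1}-a$ (obtained by plugging the Toda equation into $\triangle_{g_{\mathbb D}}\log f_i$ and discarding the nonnegative terms, which is the same computation as your conformal-change identity combined with the lower curvature bound from Lemma \ref{Kn-1}) and then applies the Cheng--Yau maximum principle on the complete surface $(\mathbb D, g(h)_i)$ to conclude $f_i\geq 1/(2a)$.
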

\begin{proof}
We have shown that in each case, for $1\leq i\leq n$, $f_i$ satisfies the estimate $\triangle_{g_{\mb{D}}}\log f_i=af_i-bf_{i-1}-cf_{i+1}-\frac{1}{2}$ for some constant $a,b,c\geq 0$. So
$$\triangle_{g(h)_i}\log f^{-1}_i=-\frac{1}{f_i}(af_i-bf_{i-1}-cf_{i+1}-\frac{1}{2})\geq \frac{1}{2}f^{-1}_{i}-a.$$
From the expression of $K_{g(h)_i}$, it is automatically bounded below. Then from the Cheng-Yau maximum principle, we obtain $f_i\geq \frac{1}{2a}$.
\end{proof}

Combining Theorem \ref{LM2}, Proposition \ref{onecom} and Lemma \ref{LowerBound} together, we obtain
\begin{prop}\label{UpperBound}
Consider the Toda system (\ref{todathmPreliminary}) $r\geq 2$ or the variant Toda system (\ref{vtoda}) $r\geq 3$ on $(\mb{D},g_{\mb{D}})$.  Let $(w_1,\cdots, w_n)$ be a solution.
Then the followings are equivalent:

(1) $q$ is bounded with respect to $g_{\mathbb D}$;

(2) There is a constant $C>0$ such that $e_f\leq C g_{\mb{D}}$;

(3) There is a constant $C>0$ such that $g(h)_i\leq C g_{\mb{D}}$ for all $0\leq i\leq n$.

If in addition $g(h)_{i_0}$ is complete for some $i_0\in\{1,\cdots,n\}$, then the conditions above are also equivalent to

(4) There is a constant $C>0$ such that $g(h)_{i_0}\leq C g_{\mb{D}}$.
\end{prop}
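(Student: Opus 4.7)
The plan is to observe that Proposition \ref{UpperBound} is essentially a direct consequence of the two main results already assembled in the subsection, Theorem \ref{LM2} and Proposition \ref{onecom}, with Lemma \ref{LowerBound} providing complementary (but logically unused) information about lower bounds. The heart of the argument is purely bookkeeping between three metric conditions.

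First I would dispose of the equivalence \textbf{(1) $\Leftrightarrow$ (2) $\Leftrightarrow$ (3)}, which does not require any completeness assumption. This is exactly the content of Theorem \ref{LM2}: boundedness of $q$ with respect to $g_{\mathbb{D}}$ is equivalent to $e_f = \bigl(\sum_{i=0}^{r-1} f_i\bigr) g_{\mathbb{D}} \leq C g_{\mathbb{D}}$, and since the $f_i$ are nonnegative with the symmetry $f_i = f_{r-i}$ noted in Section \ref{EquationSystem}, this is in turn equivalent to each individual estimate $g(h)_i = f_i g_{\mathbb{D}} \leq C g_{\mathbb{D}}$ for $0 \leq i \leq n$. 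So this part of the proof is just a citation.

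Next, assuming $g(h)_{i_0}$ is complete for some $i_0 \in \{1, \ldots, n\}$, I would establish \textbf{(3) $\Leftrightarrow$ (4)}. The implication (3) $\Rightarrow$ (4) is trivial by restricting to $i = i_0$. For (4) $\Rightarrow$ (3), invoke Proposition \ref{onecom}: the completeness of $g(h)_{i_0}$ yields a constant $C' > 0$ such that $g(h)_j \leq C' g(h)_{i_0}$ for every $j = 0, \ldots, n$. Combining this with the hypothesis $g(h)_{i_0} \leq C g_{\mathbb{D}}$ gives $g(h)_j \leq CC' g_{\mathbb{D}}$ for all $j$, which is precisely (3). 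This closes the chain of equivalences.

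Since both of these steps are essentially direct applications of previously established results, there is no real obstacle in the proof; the work has already been done in the inputs. The only minor point worth noting explicitly in the writeup is that the symmetry $f_i = f_{r-i}$ (so that controlling $f_0, \ldots, f_n$ controls all $f_i$) means one only needs the range $0 \leq i \leq n$ in condition (3), matching the range over which Proposition \ref{onecom} produces its bounds. One could also remark that Lemma \ref{LowerBound} together with (4) yields the two-sided estimate $C^{-1} g_{\mathbb{D}} \leq g(h)_{i_0} \leq C g_{\mathbb{D}}$, which is what feeds into the biLipschitz statement appearing in Theorem \ref{Intro:cyclicbound}, but this is not needed for the present proposition.
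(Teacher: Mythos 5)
Your proposal is correct and matches the paper's proof in substance: the equivalence of (1)--(3) is drawn from Theorem \ref{LM2} together with the symmetry $f_i=f_{r-i}$, and (4) is handled via Proposition \ref{onecom} exactly as you describe. The only cosmetic difference is that the paper proves (3)$\Rightarrow$(1) by the explicit telescoping identity $|q|^2_{g_{\mb{D}}}=\bigl(|q|^2_{g_{\mb{D}}}e^{2w_1}\bigr)\bigl(e^{-w_1+w_2}\bigr)^2\cdots\bigl(e^{-(2n+2-r)w_n}\bigr)^{\frac{2}{2n+2-r}}$ rather than routing through (3)$\Rightarrow$(2)$\Rightarrow$(1) as you do; both are valid.
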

\begin{proof}
We only need to show (3)$\Rightarrow$(1): Since $g(h)_i\leq Cg_{\mb{D}}$, we have $e^{-w_{i}+w_{i+1}}\leq C$ for $i=0,\cdots,n$. Hence
$$|q|^2_{g_{\mb{D}}}=\big(|q|^2_{g_{\mb{D}}}e^{2w_1}\big)\big(e^{-w_1+w_2}\big)^2\cdots\big(e^{-w_{n-1}+w_n}\big)^2
\big(e^{-(2n+2-r)w_n}\big)^{\frac{2}{2n+2-r}}\leq C.$$
\end{proof}

\begin{lem}\label{ModelEquationEstimate}
Consider the Toda system (\ref{todathmPreliminary}) for $r\geq 2$ and the variant Toda system (\ref{vtoda}) for $r\geq 3$ on $(\mb{D},g_{\mb{D}})$. Suppose $g(h)_i$ is complete for some $i\in\{1,\cdots,n\}$. Suppose there exist constants $c,d>0$ such that
\begin{equation}
    \triangle_{g(h)_i}K_{g(h)_i}\geq cK_{g(h)_i}(K_{g(h)_i}+d).
\end{equation}
Then $K_{g(h)_i}<0$. Moreover if $g(h)_i\leq Cg_{\mathbb D}$ for some constant $C>0$, then there is a constant $\delta>0$ such that
$K_{g(h)_i}\leq -\delta.$
\end{lem}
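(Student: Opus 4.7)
The plan is to recognize that this lemma is essentially a direct specialization of Lemma \ref{KEYLEMMA} to the metric $g(h)_i$ on the unit disk $\mathbb{D}$, which is a hyperbolic Riemann surface with complete conformal hyperbolic metric $g_{\mathbb{D}}$. So the main work is to verify that the hypotheses of Lemma \ref{KEYLEMMA} are met.

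First, I would check completeness and the lower curvature bound needed to invoke the Cheng--Yau maximum principle. Completeness of $g(h)_i$ is assumed by hypothesis. For the lower bound on $K_{g(h)_i}$, I would cite the last line of Lemma \ref{Kn-1}, which was already established case by case in Section \ref{EquationSystem}: the curvature $K_{g(h)_i}$ is expressed in each case as a polynomial in ratios $f_{j}/f_i$ whose signs and completeness estimates (via Theorems \ref{LM1} and \ref{varianttoda1}) force a lower bound. Together with the differential inequality $\triangle_{g(h)_i}K_{g(h)_i}\geq cK_{g(h)_i}(K_{g(h)_i}+d)$ assumed in the hypothesis, these are exactly the inputs required by Lemma \ref{KEYLEMMA}.

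Next, I would apply the first half of Lemma \ref{KEYLEMMA} to conclude that either $K_{g(h)_i}<0$ everywhere, or $K_{g(h)_i}\equiv 0$ in which case the surface would be parabolic. Since we work on $\mathbb{D}$, which is hyperbolic, the parabolic alternative is excluded, yielding $K_{g(h)_i}<0$. This handles the first assertion.

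For the second assertion, I would invoke the second half of Lemma \ref{KEYLEMMA}. Here the hyperbolic metric $g_{hyp}$ coincides with $g_{\mathbb{D}}$, and by construction $g(h)_i=f_ig_{\mathbb{D}}$ is conformal to $g_{\mathbb{D}}$. The assumption $g(h)_i\leq Cg_{\mathbb{D}}$ gives exactly the bound $g\leq Cg_{hyp}$ required, so Lemma \ref{KEYLEMMA} produces a constant $\delta>0$ with $K_{g(h)_i}\leq-\delta$. The only subtlety worth double-checking is that the differential inequality really does transfer to the universal cover (it is automatic here since $\mathbb{D}$ is simply connected and we work directly on it), so no genuine obstacle arises; the lemma is essentially a bookkeeping specialization of the key Lemma \ref{KEYLEMMA}.
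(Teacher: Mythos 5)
Your proposal is correct and follows the paper's own argument exactly: the paper's proof is the one-line observation that $g(h)_i$ is complete, $K_{g(h)_i}$ is automatically bounded below (from its explicit formula as in Lemma \ref{Kn-1}), and the conclusion then follows from Lemma \ref{KEYLEMMA}, with the parabolic alternative excluded because $\mathbb{D}$ is hyperbolic. Your write-up is just a more explicit unpacking of the same specialization.
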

\begin{proof}
Since $g(h)_i$ is complete and $K_{g(h)_i}$ has a lower bound automatically, it follows from Lemma \ref{KEYLEMMA}.
\end{proof}

Now we prove our main theorem, which generalizes Wan's result \cite{WAN} to higher rank case and extends Theorem \ref{LM2} to relate the boundedness of the solution to more geometric objects.

\begin{thm}\label{cyclicbound}
Consider the Toda system (\ref{todathmPreliminary}) for $r\geq 3$ and the variant Toda system (\ref{vtoda}) for $r\geq 4$. Let $(w_1,\cdots, w_n)$ be the complete solution. Then the followings are equivalent:

(1) $q$ is bounded with respect to $g_{\mathbb D}$;

(2) $|w_i|,~i=1,\cdots, n$ are bounded;

(3) There is a constant $C>0$ such that $C^{-1}g_{\mb{D}}\leq g(h)_i\leq C g_{\mb{D}}$ for all $i\in\{1,\cdots,n\}$;

(3') There is a constant $C>0$ such that $C^{-1}g_{\mb{D}}\leq g(h)_{i_0}\leq C g_{\mb{D}}$ for some $i_0\in\{1,\cdots,n\}$;

(4) There is a constant $C>0$ such that $C^{-1}g_{\mb{D}}\leq g_f\leq C g_{\mb{D}}$;

(5) The curvature $K_{g_f}$ of the pullback metric $g_f$ is bounded above by a negative constant.

(6) The curvature $K_{\sigma}^N$ is bounded above by a negative constant;

(7) There is a constant $\delta>0$ such that $g(h)_{i-1}\leq (1-\delta) g(h)_{i}$ for every $i=1,\cdots,n$, in the variant Toda system $(\ref{vtoda})$ $g(h)_1\leq (1-\delta) g(h)_{2}$ is replaced by $g(h)_0+g(h)_1\leq (1-\delta) g(h)_{2}$;

(7') There is a constant $\delta>0$ such that $g(h)_{i_0-1}\leq (1-\delta) g(h)_{i_0}$ for some $i_0\in\{2,\cdots,n\}$, in the variant Toda system $(\ref{vtoda})$ $g(h)_1\leq (1-\delta) g(h)_{2}$ is replaced by $g(h)_0+g(h)_1\leq (1-\delta) g(h)_{2}$;

(8) For $r\geq4$, $r\neq 5$ in (\ref{todathmPreliminary}), $r\geq 6$ in (\ref{vtoda}), the curvature $K_{g(h)_{n-1}}$ is bounded above by a negative constant.

(9) For $r\geq3$ in (\ref{todathmPreliminary}), $r\geq 4$ in (\ref{vtoda}), the curvature $K_{g(h)_{n}}$ is bounded above by a negative constant.
\end{thm}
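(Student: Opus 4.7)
My plan is to establish the equivalences in four interlocking clusters. First, I would settle the metric-upper-bound conditions (1)--(4) together with (2). Proposition \ref{UpperBound} already delivers (1) $\Leftrightarrow$ (3) $\Leftrightarrow$ (3') $\Leftrightarrow$ (4) once the automatic lower bound $g(h)_i \geq C^{-1} g_{\mathbb D}$ from Lemma \ref{LowerBound} is inserted, and this lower bound applies because the solution under consideration is complete. The equivalence (1) $\Leftrightarrow$ (2) then follows by rewriting $g(h)_i = e^{-w_i + w_{i+1}} g_{\mathbb D}$: two-sided bounds on every $g(h)_i$ translate into two-sided bounds on each consecutive difference $w_{i+1} - w_i$, and combining these with the real-solution normalization $w_i + w_{r+1-i} = 0$ and $\sum_i w_i = 0$ pins down each $w_i$ up to a uniform constant.

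Second, I would deduce the single-metric curvature bounds (8), (9) from the upper metric bound (3). Assuming (3), Lemma \ref{Kn-1} supplies the differential inequality $\triangle_{g(h)_n} K_{g(h)_n} \geq c_1 K_{g(h)_n}(K_{g(h)_n} + c_2)$ and records that $K_{g(h)_n}$ is automatically bounded from below; together with completeness of $g(h)_n$ and the upper bound $g(h)_n \leq C g_{\mathbb D}$ supplied by (3), Lemma \ref{ModelEquationEstimate} then yields $K_{g(h)_n} \leq -\delta$, which is (9). The identical argument applied at index $n-1$ yields (8) in the listed ranges.

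Third, I would close the logical cycle through the strict-ratio conditions (7) and (7'). The explicit expression for $K_{g(h)_i}$ derived inside the proof of Lemma \ref{Kn-1} shows that $K_{g(h)_n} \leq -\delta$ forces $f_{n-1}/f_n \leq 1 - \delta'$ (using $f_{n+1}/f_n \leq 1$ from Theorems \ref{LM1}/\ref{varianttoda1}), which is precisely the $i_0 = n$ instance of (7'); so (9) $\Rightarrow$ (7'), and (8) handles the $i_0 = n-1$ instance in the same way. The direction I expect to be the main obstacle is (7') $\Rightarrow$ (3'): starting from $g(h)_{i_0-1} \leq (1-\delta) g(h)_{i_0}$ and combining with $f_{i_0+1}/f_{i_0} \leq 1$, the formula for $K_{g(h)_{i_0}}$ gives $K_{g(h)_{i_0}} \leq -\delta''$; then Lemma \ref{KC} applied with background $g_{\mathbb D}$ (whose curvature is $-1$) and conformal rescaling $g(h)_{i_0} = f_{i_0} g_{\mathbb D}$ produces $g(h)_{i_0} \leq (\delta'')^{-1} g_{\mathbb D}$, which is (3'). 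In the variant Toda case the formula at $i_0 = 2$ involves the combination $f_0 + f_1 - f_2$, which is exactly why the formulation of (7)/(7') carries the modified inequality $g(h)_0 + g(h)_1 \leq (1-\delta) g(h)_2$; the same strategy goes through once that modification is tracked.

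Finally, I would handle the harmonic-map curvatures (5) and (6). Lemma \ref{gK} expresses $K_\sigma^N$ as a negative quotient of quadratic expressions in the $f_i$; once (7) is available together with the two-sided bounds $C^{-1} \leq f_i \leq C$ coming from (3) and Lemma \ref{LowerBound}, the single $(f_{n-1} - f_n)^2$ term already dominates the numerator from below while $(\sum_i f_i)^2$ is uniformly bounded above, so (3) $\Rightarrow$ (6). The Gauss equation immediately gives $K_{g_f} \leq K_\sigma^N$, hence (6) $\Rightarrow$ (5). To close back, (5) $\Rightarrow$ (4) follows by Lemma \ref{KC} applied to the conformal pair $(g_{\mathbb D}, g_f)$, since $g_f = \bigl(2r\sum_i f_i\bigr) g_{\mathbb D}$ is conformal to $g_{\mathbb D}$ with background curvature $-1$. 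This completes the cycle among all nine conditions.
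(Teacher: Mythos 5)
Your first two clusters and the tail of the fourth reproduce the paper's argument: (1)$\Leftrightarrow$(2)$\Leftrightarrow$(3)$\Leftrightarrow$(3')$\Leftrightarrow$(4) via Proposition \ref{UpperBound} and Lemma \ref{LowerBound}, (3)$\Rightarrow$(8),(9) via Lemmas \ref{Kn-1} and \ref{ModelEquationEstimate}, and (6)$\Rightarrow$(5)$\Rightarrow$(4) via the Gauss equation and Lemma \ref{KC}. Your derivation of the $i_0=n$ instance of (7') from (9) is also sound (and the $i_0=n-1$ instance from (8) goes through as well, though because $f_n/f_{n-1}>1$ rather than $\leq 1$), which suffices for (3)$\Rightarrow$(6). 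However, there are two genuine gaps.

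First, nothing in your cycle ever derives condition (7) --- the strict gap $g(h)_{i-1}\leq(1-\delta)g(h)_i$ for \emph{every} $i=1,\cdots,n$, including $i=1$, where $g(h)_0=|q|_g^2e^{2w_1}g$ involves the differential itself --- from the other conditions. You invoke ``once (7) is available'' in the last cluster, but your third cluster only produces the single instances $i_0=n$ and $i_0=n-1$ of (7'). This is not a cosmetic omission: (2)$\Rightarrow$(7) is the most technical step of the paper's proof. It introduces the auxiliary metrics $g_i=e^{-\frac{2}{r+1-2i}w_i}g_{\mathbb D}$, derives Bochner-type inequalities $\triangle_{g_i}u_i\leq cu_i$ for $u_i=1-f_{i-1}/f_i$ (with the modified $u_2$ in the variant case), and then applies the key mean-value Lemma \ref{Wan} to conclude $u_i\geq\delta$ for all $i$. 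Without some such argument, (7) is not shown to be equivalent to the rest.

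Second, your route (7')$\Rightarrow$(3') breaks for $i_0<n$. You use ``$f_{i_0+1}/f_{i_0}\leq 1$'', but Theorem \ref{LM1} gives the opposite monotonicity $f_0<f_1<\cdots<f_n$, so $f_{i_0+1}/f_{i_0}>1$ whenever $i_0+1\leq n$; the available a priori bound is only $f_{i_0+1}/f_{i_0}\leq\frac{(i_0+1)(r-i_0-1)}{i_0(r-i_0)}=1+\frac{r-2i_0-1}{i_0(r-i_0)}$, which can exceed $1+\delta$, so $K_{g(h)_{i_0}}=2\bigl(\tfrac{f_{i_0-1}+f_{i_0+1}}{f_{i_0}}-2\bigr)\leq-\delta''$ does not follow from $f_{i_0-1}/f_{i_0}\leq 1-\delta$ alone. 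The paper avoids this by proving (7')$\Rightarrow$(6) directly: completeness and Proposition \ref{onecom} give $\sum_jf_j\leq Cf_{i_0}$, so in the formula of Lemma \ref{gK} the single numerator term $(f_{i_0-1}-f_{i_0})^2\geq\delta^2f_{i_0}^2$ already forces $K_\sigma^N\leq-\delta^2/(2r C^2)$. Replacing your (7')$\Rightarrow$(3') with that argument and supplying a proof of (2)$\Rightarrow$(7) would complete the cycle.
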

\begin{proof}
Notice that in our case $f$ is conformal, so $g_f=e_f$. From Lemma \ref{LowerBound}, we see $f_i=e^{-w_i+w_{i+1}}$, $i=1,\cdots,n$ have lower bound away from zero. So we automatically have the lower bound of $g(h)_i$ and $g_f$. From the lower bound of $f_n=e^{-(2n+2-r)w_n}$, we have the upper bound of $w_n$. Then from the induction we obtain the upper bound of all $w_i$, $i=1,\cdots,n$.

By the same argument the upper boundedness of all $f_i$, $i=1,\cdots,n$ implies the lower boundedness of all $w_i$, $i=1,\cdots,n$. Then by Proposition \ref{UpperBound}, (1)(2)(3)(3')(4) are equivalent.

Next we show (6)$\Rightarrow$(5)$\Rightarrow$(4).

(6)$\Rightarrow$(5): From the Gauss equation, $K_{g_f}=K_{\sigma}^N+\det(II)$, where $II$ is the second fundamental form under an orthonormal basis. Since $f$ is harmonic and conformal, $f$ is minimal. So $\det(II)\leq 0$. So $K_{\sigma}^N\leq -\delta$ implies $K_{g_f}\leq -\delta.$

(5)$\Rightarrow$(4): Since $f$ is conformal, it follows from Lemma \ref{KC}.

Next, we show (3)$\Rightarrow$(8)$\Rightarrow$(6), (3)$\Rightarrow$(9)$\Rightarrow$(6).

(3)$\Rightarrow$(8), (3)$\Rightarrow$(9): 
From the conditions in (3), $g(h)_{n-1}$ or $g(h)_n$ is upper bounded by $g_{\mb D}$.  Applying Lemma \ref{ModelEquationEstimate} and Lemma \ref{Kn-1}, we obtain the estimate.

(8)$\Rightarrow$(6), (9)$\Rightarrow$(6): We only prove (8)$\Rightarrow$(6). The proof of (9)$\Rightarrow$(6) is similar. Note that by Proposition \ref{onecom}, $f_j$, $j=0,\cdots,n$ are bounded by $Cf_{n-1}$ for some positive constant $C$. Then from Lemma \ref{gK},\\
(i) for (\ref{todathmPreliminary}):
$$K_\sigma^N=-\frac{1}{2r}\frac{\sum_{i=1}^r(f_{i-1}-f_i)^2}{(\sum_{i=1}^{r}f_{i-1})^2}\leq-\frac{1}{2rC}\frac{(f_{n-2}-f_{n-1})^2+(f_{n}-f_{n-1})^2}{f_{n-1}^2}\leq -\frac{1}{4rC}\frac{(f_{n-2}+f_n-2f_{n-1})^2}{f_{n-1}^2}\leq -\delta.$$\\
(ii) for (\ref{vtoda}):
$$K_\sigma^N=-\frac{1}{2r}\frac{2(f_0-f_1)^2+2(f_0+f_1-f_2)^2+\sum\limits_{i=3}^{r-2}(f_{i-1}-f_i)^2}{(\sum\limits_{i=1}^{r}f_{i-1})^2}$$
So if $n\geq 4$, $$K_\sigma^N\leq-\frac{1}{2rC}\frac{(f_{n-2}-f_{n-1})^2+(f_{n}-f_{n-1})^2}{f_{n-1}^2}\leq -\frac{1}{4rC}\frac{(f_{n-2}+f_n-2f_{n-1})^2}{f_{n-1}^2}\leq -\delta.$$
If $n=3$, $$K_\sigma^N\leq-\frac{1}{2rC}\frac{(f_0+f_1-f_2)^2+(f_2-f_3)^2}{f_2^2}\leq -\frac{1}{4rC}\frac{(f_0+f_1+f_3-2f_2)^2}{f_2^2}\leq -\delta.$$

Finally, we show (2)$\Rightarrow$(7)$\Rightarrow$(7')$\Rightarrow$(6).

(2)$\Rightarrow$(7): For each $i\in\{1,\cdots,n\}$, define the metric $g_i=e^{-\frac{2}{r+1-2i}w_i}g_{\mb{D}}$. Since $|w_i|$ is bounded, $g_i$ is complete. We calculate the curvature of $g_i$, denote $g_{\mb{D}}=\ti{g}_{\mb{D}}dz\otimes d\bar{z}$,
\begin{\eq}
K_{g_i}&=&-\frac{2}{e^{-\frac{2}{r+1-2i}w_i}\ti{g}}\partial_z\partial_{\bar{z}}\log(e^{-\frac{2}{r+1-2i}w_i}\ti{g}_{\mb{D}})\\
&=&-2e^{\frac{2}{r+1-2i}w_i}(-\frac{2}{r+1-2i}\triangle_{g_{\mb{D}}}w_i+\triangle_{g_{\mb{D}}}\log \ti{g}_{\mb{D}})\\
&=&-2e^{\frac{2}{r+1-2i}w_i}(-\frac{2}{r+1-2i}\triangle_{g_{\mb{D}}}w_i+\frac{1}{2}).
\end{\eq}
Then from (\ref{todathmPreliminary}) and (\ref{vtoda}), set $w_0=-w_1-\log|q|_{g}^2$, $w_{n+1}=-(2n+1-r)w_n$ in (\ref{todathmPreliminary}) and $w_0=-w_2-\log|q|_{g}^2$, $w_{n+1}=-(2n+1-r)w_n$ in (\ref{vtoda}), except $i=2$ in (\ref{vtoda}), we have
\begin{\eq}
K_{g_i}=\frac{4e^{\frac{2}{r+1-2i}w_i}}{r+1-2i}(e^{-w_{i-1}+w_i}-e^{-w_i+w_{i+1}}),\quad 1\leq i\leq n.
\end{\eq}
For $i=2$ in (\ref{vtoda}), $K_{g_2}=\frac{4e^{\frac{2}{r-3}w_2}}{r-3}(e^{-w_{0}+w_1}+e^{-w_{1}+w_2}-e^{-w_2+w_{3}}).$
From Theorem \ref{LM1} and Theorem \ref{varianttoda1}, $K_{g_i}<0$.
From the assumption of (2), $g_i$ is equivalent to $g_{\mb{D}}$.
Now set $$u_i=1-e^{-w_{i-1}+w_i}/e^{-w_i+w_{i+1}},~1\leq i\leq n \text{ except }u_2=1-(e^{-w_{0}+w_1}+e^{-w_1+w_2})/e^{-w_2+w_{3}}\text{ for $(\ref{vtoda})$}.$$ Then $u_i>0$. From the assumption (2), $|w_i|$ are bounded and thus $u_i$ is mutually bounded by $-K_{g_i}$.

To apply Lemma \ref{Wan}, we calculate the Bochner formula for $u_i$, $i=1,\cdots,n$.
\\
(i) For (\ref{todathmPreliminary}), from the formula (\ref{1})(\ref{2}) in Lemma \ref{n=1}, we obtain
\begin{\eq}
\triangle_{g_\mb{D}} u_1&\leq& -e^{3w_1-w_2}|q|^2_{g_\mb{D}}\triangle_{g_\mb{D}} (3w_1-w_{2}-\frac{r}{2}),\\
\triangle_{g_\mb{D}} u_i&\leq& -e^{-w_{i-1}+2w_i-w_{i+1}}\triangle_{g_\mb{D}} (-w_{i-1}+2w_i-w_{i+1}),\quad 2\leq i\leq n.
\end{\eq}
Then
\begin{\eq}
\triangle_{g_\mb{D}} u_1&\leq& e^{3w_1-w_2}|q|^2_{g_\mb{D}} (3e^{-w_1+w_2}u_1-e^{-w_2+w_{3}}u_2),\\
\triangle_{g_\mb{D}} u_i&\leq& e^{-w_{i-1}+2w_i-w_{i+1}} (-e^{-w_{i-1}+w_{i}}u_{i-1}+2e^{-w_{i}+w_{i+1}}u_i-e^{-w_{i+1}+w_{i+2}}u_{i+1}),\quad 2\leq i\leq n.
\end{\eq}
From the assumption, $|w_i|$'s and then $|q|^2_{g_{\mb{D}}}$ are bounded. So we obtain that there is a constant $c>0$ depending on $C$ such that
$$\triangle_{g_i} u_i=e^{\frac{2}{r+1-2i}w_i}\triangle_{g_{\mb{D}}} u_i\leq c u_i,\quad i=1,\cdots,n.$$
Then Lemma \ref{Wan} implies $u_i\geq \delta$ for some constant $\delta>0$ depending on $C$, which means $g(h)_{i-1}\leq (1-\delta) g(h)_{i}$.
\\
(ii) For (\ref{vtoda}), similarly,
\begin{\eq}
\triangle_{g_\mb{D}} u_1&\leq& -e^{2w_1}|q|^2_{g_\mb{D}}\triangle_{g_\mb{D}} (2w_1-\frac{r}{2}),\\
\triangle_{g_\mb{D}} u_2&\leq& -e^{w_1+2w_2-w_3}|q|^2_{g_\mb{D}}\triangle_{g_\mb{D}} (w_1+2w_2-w_3-\frac{r}{2})\\
&&-e^{-w_{1}+2w_2-w_{3}}\triangle_{g_\mb{D}} (-w_{1}+2w_2-w_{3}),\\
\triangle_{g_\mb{D}} u_i&\leq& -e^{-w_{i-1}+2w_i-w_{i+1}}\triangle_{g_\mb{D}} (-w_{i-1}+2w_i-w_{i+1}),\quad 2\leq i\leq n.
\end{\eq}
Then
\begin{\eq}
\triangle_{g_\mb{D}} u_1&\leq& e^{2w_1}|q|^2_{g_\mb{D}}(2e^{-w_{1}+w_{2}}u_1-\frac{r-1}{2}),\\
\triangle_{g_\mb{D}} u_2&\leq& e^{w_1+2w_2-w_3}|q|^2_{g_\mb{D}}(e^{-w_{1}+w_{2}}u_1-\frac{r-1}{4}+2e^{-w_{2}+w_{3}}u_2-2e^{w_1+w_2}|q|^2_{g_{\mb{D}}}-e^{-w_{3}+w_{4}}u_{3})\\
&&+e^{-w_{1}+2w_2-w_{3}} (-e^{-w_{1}+w_{2}}u_{1}+2e^{-w_{2}+w_{3}}u_2-2e^{w_1+w_2}|q|^2_{g_{\mb{D}}}-e^{-w_{3}+w_{4}}u_{3})\\
&\leq&(e^{w_1}|q|^2_{g_\mb{D}}-e^{-w_1})e^{2w_2-w_{3}}e^{-w_{1}+w_{2}}u_1+cu_2,\\
\triangle_{g_\mb{D}} u_3&\leq& e^{-w_{2}+2w_3-w_{4}} (e^{w_1+w_2}|q|^2_{g_{\mb{D}}}+e^{-w_1+w_2}-e^{-w_2+w_3}+2e^{-w_{3}+w_{4}}u_3-e^{-w_{4}+w_{5}}u_{4}),\\
\triangle_{g_\mb{D}} u_i&\leq& e^{-w_{i-1}+2w_i-w_{i+1}} (-e^{-w_{i-1}+w_{i}}u_{i-1}+2e^{-w_{i}+w_{i+1}}u_i-e^{-w_{i+1}+w_{i+2}}u_{i+1}),\quad 4\leq i\leq n.
\end{\eq}
From Theorem \ref{varianttoda1}, $e^{w_1+w_2}|q|^2_{g_{\mb{D}}}<e^{-w_1+w_2}$, $e^{w_1+w_2}|q|^2_{g_{\mb{D}}}+e^{-w_1+w_2}<e^{-w_2+w_3}$, so we obtain $\triangle_{g_{\mb{D}}} u_i\leq c u_i$, $i=1,\cdots,n.$ Then Lemma \ref{Wan} implies the desired results.

(7)$\Rightarrow$(7'): It is obvious.

(7')$\Rightarrow$(6): Note that by Proposition \ref{onecom}, $\sum_{i=1}^rf_{i-1}$ is bounded by $Cf_{i_0}$ for some positive constant $C$. Then from the formula in Lemma \ref{gK} and the assumption in (7'),
\\
(i) For (\ref{todathmPreliminary}),
 $$K_\sigma^N=-\frac{1}{2r}\frac{\sum_{i=1}^r(f_{i-1}-f_i)^2}{(\sum_{i=1}^{r}f_{i})^2}\leq-\frac{1}{2rC}\frac{(f_{i_0-1}-f_{i_0})^2}{f_{i_0}^2}\leq -\delta.$$
(ii) For (\ref{vtoda}), the proof is similar.
\end{proof}




Notice that in Lemma \ref{Kn-1}, in some lower rank cases, to obtain the Bochner formula for curvatures, we only need the completeness of $g(h)_n$ or $g(h)_{n-1}$, but not the completeness of the whole solution. By using the similar method, we obtain the following results.
\begin{thm}\label{onecomn}
Consider the variant Toda system (\ref{vtoda}) for $r=4,5$. Let $(w_1, w_2)$ be a solution. Suppose $g(h)_2$ is complete. Then the followings are equivalent:

(1) $q$ is bounded with respect to $g_{\mathbb D}$;

(2) $w_i$, $i=1,2$ are lower bounded;

(3) There is a constant $C>0$ such that $g(h)_i\leq Cg_{\mb{D}}$ for all $i\in\{1,2\}$;

(3') There is a constant $C>0$ such that $g(h)_{2}\leq C g_{\mb{D}}$;

(4) There is a constant $C>0$ such that $g_f\leq C g_{\mb{D}}$;

(5) The curvature $K_{g_f}$ of the pullback metric $g_f$ is bounded above by a negative constant.

(6) The curvature $K_{\sigma}^N$ is bounded above by a negative constant;

(7) The curvature $K_{g(h)_{2}}$ is bounded above by a negative constant.
\end{thm}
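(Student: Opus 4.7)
The plan is to mirror the architecture of the proof of Theorem \ref{cyclicbound}, exploiting the fact that for $r=4,5$ we have $n=2$, so the single complete metric $g(h)_2$ plays the role of $g(h)_n$, which is precisely the endpoint where the crucial curvature differential inequality of Lemma \ref{Kn-1} holds \emph{without} assuming completeness of the whole tuple of metrics. The loop I would close is
\[
(1) \Rightarrow (3) \Rightarrow (3') \Rightarrow (7) \Rightarrow (6) \Rightarrow (5) \Rightarrow (4) \Rightarrow (3) \Rightarrow (1),
\]
with (2) $\Leftrightarrow$ (3) established separately.

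First, I would obtain (1) $\Leftrightarrow$ (3) $\Leftrightarrow$ (3') $\Leftrightarrow$ (4) by directly invoking Proposition \ref{UpperBound} with $i_0 = 2$, which is exactly the hypothesis of the present theorem. For (2) $\Leftrightarrow$ (3) I would use that the solution is real: $f_2 = e^{-2w_2}$ for $r=4$ and $f_2 = e^{-w_2}$ for $r=5$, so an upper bound on $f_2$ is the same as a lower bound on $w_2$; Lemma \ref{LowerBound} supplies the complementary lower bounds on $f_1, f_2$ (equivalently, upper bounds on $w_1, w_2$), and the identity $f_1 = e^{-w_1 + w_2}$ transfers bounds between $w_1$ and $w_2$ in the obvious way.

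Next, for (3') $\Rightarrow$ (7), the key input is the inequality
\[
\triangle_{g(h)_2} K_{g(h)_2} \geq c_1 K_{g(h)_2}\bigl(K_{g(h)_2} + c_2\bigr)
\]
furnished by Lemma \ref{Kn-1}, which for $r=4,5$ in (\ref{vtoda}) requires only a real solution, not a complete one. Combined with the hypothesis that $g(h)_2$ is complete and the bound $g(h)_2 \leq C g_{\mb{D}}$, this feeds into Lemma \ref{ModelEquationEstimate} (equivalently Lemma \ref{KEYLEMMA}) to produce a uniform negative upper bound $K_{g(h)_2} \leq -\delta$. For (7) $\Rightarrow$ (6), I would substitute the identity $K_{g(h)_2} = c(f_0 + f_1 - f_2)/f_2$ (with $c = 4$ or $c = 2$ according to whether $r = 4$ or $r = 5$, as computed in Lemma \ref{Kn-1}) into the formula for $K_\sigma^N$ of Lemma \ref{gK}, drop all nonnegative terms in the numerator other than $(f_0 + f_1 - f_2)^2$, and use Proposition \ref{onecom} to dominate $\sum_{i=1}^r f_{i-1} \leq C f_2$. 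The step (6) $\Rightarrow$ (5) is the Gauss equation for the conformal harmonic immersion $f$, which is minimal so $\det(II) \leq 0$, whence $K_{g_f} \leq K_\sigma^N \leq -\delta$. Finally, (5) $\Rightarrow$ (4) is an application of Lemma \ref{KC} comparing $g_f$ (conformal to $g_{\mb{D}}$, curvature $\leq -\delta$) with $g_{\mb{D}}$ (complete, constant curvature $-1$).

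The only step warranting genuine attention is the critical passage (3') $\Rightarrow$ (7): one must confirm that the curvature differential inequality used there does not secretly depend on the completeness of the full tuple of metrics. This is transparent from the statement of Lemma \ref{Kn-1}, where the estimate on $K_{g(h)_n}$ for $r = 4, 5$ in (\ref{vtoda}) is listed among the cases requiring only a real solution, separately from the batch that requires completeness of the whole tuple. Once this observation is made, no analytic input beyond what is already available is needed, and the argument becomes a straightforward restriction of the proof of Theorem \ref{cyclicbound}.
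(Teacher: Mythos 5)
Your proposal is correct and is essentially the proof the paper intends: the paper states Theorem \ref{onecomn} with no separate argument, only the remark that for $r=4,5$ in (\ref{vtoda}) the Bochner inequality for $K_{g(h)_n}=K_{g(h)_2}$ in Lemma \ref{Kn-1} needs only a real solution, so the scheme of Theorem \ref{cyclicbound} goes through with completeness of $g(h)_2$ alone, exactly as you argue. The only slight imprecision is your parenthetical that Lemma \ref{LowerBound} gives lower bounds on both $f_1$ and $f_2$ --- it only controls the complete index, i.e.\ $f_2\geq C^{-1}$, hence an upper bound on $w_2$ --- but that is all your transfer argument for (2)$\Leftrightarrow$(3) actually uses, so nothing breaks.
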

\begin{thm}\label{onecomn-1}
Consider the Toda system (\ref{todathmPreliminary}) for $r=4$ and the variant Toda system (\ref{vtoda}) for $r=7$. Let $n=[\frac{r}{2}]$. Let $(w_1,\cdots, w_n)$ be a solution. Suppose $g(h)_{n-1}$ is complete. Then the followings are equivalent:

(1) $q$ is bounded with respect to $g_{\mathbb D}$;

(2) $w_i$, $i=1,\cdots, n$ are lower bounded;

(3) There is a constant $C>0$ such that $g(h)_i\leq Cg_{\mb{D}}$ for all $i\in\{1,\cdots, n\}$;

(3') There is a constant $C>0$ such that $g(h)_{n-1}\leq C g_{\mb{D}}$;

(4) There is a constant $C>0$ such that $g_f\leq C g_{\mb{D}}$;

(5) The curvature $K_{g_f}$ of the pullback metric $g_f$ is bounded above by a negative constant.

(6) The curvature $K_{\sigma}^N$ is bounded above by a negative constant;

(7) The curvature $K_{g(h)_{n-1}}$ is bounded above by a negative constant.
\end{thm}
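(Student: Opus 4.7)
The plan is to mirror the proof of Theorem \ref{cyclicbound}, replacing the full-completeness hypothesis by completeness of the single metric $g(h)_{n-1}$. The decisive ingredient is that for the two cases $r=4$ in $(\ref{todathmPreliminary})$ and $r=7$ in $(\ref{vtoda})$, Lemma \ref{Kn-1} already supplies the Bochner-type inequality $\triangle_{g(h)_{n-1}}K_{g(h)_{n-1}}\ge c_1K_{g(h)_{n-1}}(K_{g(h)_{n-1}}+c_2)$ together with a uniform lower bound on $K_{g(h)_{n-1}}$, \emph{without} any completeness assumption on the remaining $g(h)_j$'s; this relies on the reality-induced identities $f_3=f_1$ for $r=4$ and $f_4=f_3$ for $r=7$, which eliminate the outlying terms in the Bochner computation that would otherwise require completeness inputs from Theorem \ref{LM1} or Theorem \ref{varianttoda1}.

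First I would establish $(1)\Leftrightarrow(3)\Leftrightarrow(3')\Leftrightarrow(4)$ by direct application of Proposition \ref{UpperBound} with $i_0=n-1$, which is legitimate because $g(h)_{n-1}$ is complete by hypothesis. For $(1)\Rightarrow(2)$, I would combine Proposition \ref{UpperBound} with Lemma \ref{LowerBound} to conclude that every $f_j$ is mutually bounded; starting from $f_n=e^{-(2n+2-r)w_n}$ one pins down $w_n$, and induction on $i$ through $f_i=e^{-w_i+w_{i+1}}$ then controls all of $w_1,\ldots,w_n$, in particular giving the lower bound in $(2)$.

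The direction $(2)\Rightarrow(1)$ is where the argument departs from its analogue in Theorem \ref{cyclicbound} and will be the main obstacle. My plan is to combine Proposition \ref{onecom}, which gives $f_j\le Cf_{n-1}$ and $|q|_{g(h)_{n-1}}\le C$ (so that $|q|^2_{g_{\mb{D}}}\le Cf_{n-1}^{\,r}$ in the cyclic case and $|q|^2_{g_{\mb{D}}}\le Cf_{n-1}^{\,r-1}$ in the subcyclic case), with the strict monotonicity $f_{n-1}<f_n$ from Theorem \ref{LM1} or Theorem \ref{varianttoda1} (available since $\mb{D}$ is hyperbolic, not parabolic). For $r=4$, $f_1<f_2$ reads $3w_2<w_1$, and so
\[
|q|^2_{g_{\mb{D}}}\le Cf_1^{\,4}=Ce^{-4w_1+4w_2}<Ce^{-8w_1/3}\le Ce^{8M/3},
\]
using only the lower bound $w_1\ge -M$ from $(2)$. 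The $r=7$ case in $(\ref{vtoda})$ is handled analogously via $f_2<f_3$, which together with the reality relation $w_4=0$ reads $2w_3<w_2$, combined with $|q|^2_{g_{\mb{D}}}\le Cf_2^{\,6}=Ce^{6(-w_2+w_3)}<Ce^{-6w_3}\le Ce^{6M}$ upon invoking $w_3\ge -M$.

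For $(3)\Rightarrow(7)$ I would apply Lemma \ref{ModelEquationEstimate} with $i=n-1$: the Bochner inequality from Lemma \ref{Kn-1} is valid in each of our two cases, $g(h)_{n-1}$ is complete with curvature bounded below, and $(3)$ gives $g(h)_{n-1}\le Cg_{\mb{D}}$, yielding $K_{g(h)_{n-1}}\le -\delta$. For $(7)\Rightarrow(6)$ one uses Lemma \ref{gK}: Proposition \ref{onecom} bounds $\sum_{j}f_j$ by $Cf_{n-1}$, while the explicit formula $K_{g(h)_{n-1}}=2((f_{n-2}+f_n)/f_{n-1}-2)$ converts $K_{g(h)_{n-1}}\le -\delta$ into a positive lower bound for $(f_{n-2}+f_n-2f_{n-1})^2/f_{n-1}^{\,2}$, a summand appearing in the numerator of $-2rK_\sigma^N$. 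Finally $(6)\Rightarrow(5)\Rightarrow(4)$ is identical to the corresponding step in Theorem \ref{cyclicbound}: the Gauss equation together with minimality of the conformal harmonic $f$ yields $K_{g_f}\le K_\sigma^N$, and Lemma \ref{KC} promotes this curvature bound to the desired conformal upper bound on $g_f$, closing the loop of equivalences.
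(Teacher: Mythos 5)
Most of your chain is sound and matches the method the paper indicates (the paper gives no explicit proof of this theorem, only the remark that Lemma \ref{Kn-1} supplies the Bochner inequality for $K_{g(h)_{n-1}}$ in these two cases without full completeness, which you correctly identify as the decisive point for $(3)\Rightarrow(7)\Rightarrow(6)\Rightarrow(5)\Rightarrow(4)$). However, your step $(2)\Rightarrow(1)$ has a genuine gap. You invoke the strict monotonicity $f_{n-1}<f_n$ from Theorem \ref{LM1} (resp.\ Theorem \ref{varianttoda1}), but those theorems assert the inequalities $f_{i-1}/f_i<1$ only for \emph{the} complete solution, i.e.\ the one for which all of $g(h)_1,\dots,g(h)_{r-1}$ are complete. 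In Theorem \ref{onecomn-1} the hypothesis is only that the single metric $g(h)_{n-1}$ is complete, and nothing in the paper upgrades such a solution to the complete one. The paper is explicitly careful about exactly this distinction: Lemma \ref{easycase} re-derives $f_0/f_1\le 1$ from completeness of $g(h)_1$ alone rather than citing Theorem \ref{varianttoda1}, and inside the proof of Lemma \ref{Kn-1} every use of the monotonicity is prefixed by ``suppose the solution is complete.'' Using the full-completeness monotonicity here is therefore unjustified (and would partly defeat the purpose of the theorem, which is to weaken that hypothesis).

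The implication itself is salvageable, but by a different mechanism, in the spirit of Lemma \ref{easycase} rather than Theorem \ref{LM1}. For $r=4$ in $(\ref{todathmPreliminary})$ one has $\triangle_{g(h)_1}(f_0/f_1)\ge (f_0/f_1)\bigl(3(f_0/f_1)-4\bigr)$ (using $f_3=f_1$), so the Cheng--Yau maximum principle on the complete metric $g(h)_1$ (whose curvature $2\bigl(\tfrac{f_0+f_2}{f_1}-2\bigr)$ is bounded below) gives $f_0\le\frac43 f_1$; then $\triangle_{g_{\mb D}}\log f_1=2f_1-f_0-f_2+\frac12 K_{g_{\mb D}}\ge \frac23 f_1-f_2-\frac12$, and since $(2)$ bounds $f_2=e^{-2w_2}$ above, Cheng--Yau on $g_{\mb D}$ bounds $\log f_1$ above, giving $(3')$. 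For $r=7$ in $(\ref{vtoda})$ the analogous computation yields $\triangle_{g(h)_2}\bigl(\tfrac{f_0+f_1}{f_2}\bigr)\ge 2\bigl(\tfrac{f_0+f_1}{f_2}\bigr)^2-3\bigl(\tfrac{f_0+f_1}{f_2}\bigr)$, hence $f_0+f_1\le\frac32 f_2$, and then $\triangle_{g_{\mb D}}\log f_2\ge\frac12 f_2-f_3-\frac12$ with $f_3=e^{-w_3}$ bounded above by $(2)$. You should replace your cross-ratio argument by something of this form; as written, the loop through condition $(2)$ is not closed.
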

So far we considered $(\Sigma,g)=(\mb{D},g_{\mb{D}})$. In fact this constraint is not essential since the geometric objects we consider are independent of the choice of the K\"ahler metric on $\Sigma$.
\begin{cor}
Let $\Sigma$ be a Riemann surface with a hyperbolic Hermitian metric $g_{-1}$. Let $g$ be a Riemannian metric conformal to $g_{-1}$ with bounded conformal factor.
Then Theorem \ref{cyclicbound}, Theorem \ref{onecomn} and Theorem \ref{onecomn-1} hold for replacing $(\mb{D},g_{\mb{D}})$ by $(\Sigma,g)$.
\end{cor}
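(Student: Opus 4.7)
The plan is to reduce everything to the disk case already established. Let $\pi:\widetilde{\Sigma}\to\Sigma$ denote the universal cover, so $\widetilde{\Sigma}\cong\mathbb{D}$ and $\pi^*g_{-1}=g_{\mathbb{D}}$. Set $\widetilde{g}:=\pi^*g$; by hypothesis there are constants with $C^{-1}g_{\mathbb{D}}\leq\widetilde{g}\leq Cg_{\mathbb{D}}$ on $\mathbb{D}$. Writing $\widetilde{g}=e^{\tilde f}g_{\mathbb{D}}$, the function $\tilde f$ is bounded on $\mathbb{D}$.

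First, I would observe that a solution $(w_1,\ldots,w_r)$ of the Toda system (\ref{todathmPreliminary}) or the variant Toda system (\ref{vtoda}) on $(\Sigma,g)$ pulls back to a $\pi_1(\Sigma)$-equivariant solution $(\widetilde{w}_1,\ldots,\widetilde{w}_r)$ of the same system on $(\mathbb{D},\widetilde{g})$. By Remark \ref{equiv}, the shifted tuple $\widetilde{w}'_i:=\widetilde{w}_i+\tfrac{r+1-2i}{2}\tilde f$ satisfies the Toda system on $(\mathbb{D},g_{\mathbb{D}})$ and defines the same harmonic metric $h$ on $E$. Since $\tilde f$ is bounded, the shift preserves boundedness in both directions: $|\widetilde{w}_i|$ is bounded iff $|\widetilde{w}'_i|$ is bounded. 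Completeness of the metrics $g(h)_i$ transfers between $\Sigma$ and $\mathbb{D}$ because a covering map is a local isometry for these auxiliary metrics (they depend on $h$ only), so the equivariant lift of a complete solution is complete, and by the uniqueness in Theorem \ref{LM1} and Theorem \ref{varianttoda1}, it coincides with the complete solution on $\mathbb{D}$.

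Second, the geometric objects in each of the three theorems — the pullback metric $g_f$, its curvature $K_{g_f}$, the sectional curvature $K_\sigma^N$, the metrics $g(h)_i=e^{-w_i+w_{i+1}}g$ and their curvatures $K_{g(h)_i}$ — depend only on the harmonic metric $h$, not on the background K\"ahler metric. The holomorphic differential $q$ is also intrinsic. For the conditions that compare against a background metric, the bounded conformal factor $\tilde f$ guarantees the equivalences $g(h)_i\leq Cg \Leftrightarrow \pi^*(g(h)_i)\leq C'g_{\mathbb{D}}$ and analogously $g_f\leq Cg\Leftrightarrow \pi^*g_f\leq C'g_{\mathbb{D}}$; and since $|q|_g^2=e^{-r\tilde f}|q|_{g_{-1}}^2$ with $|\tilde f|$ bounded, boundedness of $q$ with respect to $g$ is equivalent to boundedness of $\pi^*q$ with respect to $g_{\mathbb{D}}$.

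With these identifications, I would apply Theorem \ref{cyclicbound} (respectively Theorem \ref{onecomn} or Theorem \ref{onecomn-1}) to the shifted lifted solution on $(\mathbb{D},g_{\mathbb{D}})$; each of the listed equivalences there transfers back to $(\Sigma,g)$ since each individual condition has been verified to be invariant under the shift-and-lift procedure. There is no substantial obstacle here: the content of the statement is that the proofs given previously never really used the specific background $(\mathbb{D},g_{\mathbb{D}})$, only the quasi-isometry class of the hyperbolic structure, which is exactly what the bounded conformal factor assumption encodes.
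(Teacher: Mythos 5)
Your proposal is correct and follows essentially the same route as the paper's (much terser) argument: invoke Remark \ref{equiv} to see that the bounded conformal factor shifts the solution by bounded amounts, pass to the universal cover (which preserves all the boundedness and completeness conditions), and observe that the geometric quantities depend only on the harmonic metric and the complex structure, not on the background K\"ahler metric. Your write-up merely makes explicit a few points the paper leaves implicit (the completeness transfer and the comparison of each individual condition).
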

\begin{proof}
From Remark \ref{equiv}, the solutions with respect the metric $g_{-1}$ and $g$ are bounded to each other. So the boundedness in the theorems above are consistent under $g_{-1}$ and $g$. Considering the universal cover also don't change the boundedness. For the geometric objects, they come from the harmonic map, which is only depend on the complex structure of $\Sigma$. So they are independent of the choice of $g$ or  $g_{-1}$.
\end{proof}
\begin{rem}
Suppose $g$ is complete and conformal to the hyperbolic metric. Then from the Cheng-Yau maximum principle, the pinched hyperbolic condition $-a\leq K_g\leq -b$ for constants $a,b>0$ implies the conformal factor is bounded.
\end{rem}

\section{Harmonic maps between surfaces and bounded quadratic differentials}\label{r=2}

In this section, we discuss the Toda system for $r=2$, which corresponds to the harmonic map equation between surfaces. First we recall some calculations in \cite{SchoenYau}. Let $(\Sigma,g=\sigma(z)|dz|^2)$, $(M,h=\mu(u)|du|^2)$ be two Riemann surfaces with K\"ahler metrics. Let $f$ be a harmonic map between $\Sigma$ and $M$.

Set $H=|\partial u|_{g,h}^2=|u_z|^2\frac{\mu}{\sigma}$, $L=|\bar{\partial} u|_{g,h}^2=|u_{\bar{z}}|^2\frac{\mu}{\sigma}$. Then the energy density $e(f):=\frac{1}{2}|df|_{g,h}^2=H+L$. The Jacobian $J(f)=H-L$. The Hopf differential is the $(2,0)$-part of the pullback metric $\text{Hopf}(f)=u_z\bar{u}_z\mu dz\otimes dz$, denoted as $q$, so $|q|_{g}^2=HL$. Denote by $K_{g},K_{h}$ the Gaussian curvature of $\Sigma, M$ respectively. Then at nonzero point of $H$, (our $\triangle_g$ differs from the notation in \cite{SchoenYau} by a factor $4$)
\begin{equation*}\label{H}
4\triangle_{g}\log H=-2K_{h} H+2K_{h} L+2K_{g}.
\end{equation*}

Let $w=-\frac{1}{2}\log H$, 
 the above equation becomes
\begin{equation}\label{harmoniceq}
\triangle_g w=-\frac{K_h}{4}(|q|_g^2e^{2w}-e^{-2w})-\frac{K_{g}}{4},
\end{equation}
which coincides with equation (\ref{harmonic}) for $\Sigma=M=\mb{D}$, $g=4h=g_{\mb{D}}$.
Recall in \cite{WAN} the map $f$ is called orientation-preserving if the Jacobian $J(f)=H-L=e^{-2w}-|q|_g^2e^{2w}\geq 0$.
The map $f$ is called quasi-conformal if $\frac{L}{H}=|q|_g^2e^{4w}\leq k$ for some constant $k<1$. The energy density of $f$ is given by $e^{-2w}+|q|_g^2e^{2w}$. Let $|\partial f|^2=H\cdot g.$

Wan in \cite{WAN} showed the following result.
\begin{thm}(\cite{WAN}) \label{WAN}
Let $f$ be an orientation-preserving harmonic map from $\mathbb D$ to itself. Suppose the metric $|\partial f|^2$ is complete.
Then the followings are equivalent:\\
(a) the Hopf differential is bounded with respect to $g_{\mathbb D}$;\\
(b) $f$ is quasi-conformal;\\
(c) the energy density of $f$ is bounded.

In such cases $f$ is a diffeomorphism.
\end{thm}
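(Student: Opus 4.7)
The plan is to recognize Wan's Theorem as a direct consequence of Proposition \ref{n=1 bd} once the normalizations are matched. Setting $\Sigma = M = \mathbb{D}$, $g = g_{\mathbb{D}}$, and $h = g_{\mathbb{D}}/4$, we have $K_g = -1$ and $K_h = -4$, so equation (\ref{harmoniceq}) becomes
\[
\triangle_{g_{\mathbb{D}}} w \;=\; |q|_{g_{\mathbb{D}}}^2 e^{2w} - e^{-2w} + \tfrac{1}{4},
\]
which is an instance of (\ref{ab}) with $a = b = 2$, $c = 1/4$, and $\kappa \equiv -1$; in particular $bc = 1/2$, so all hypotheses of Proposition \ref{n=1 bd} are satisfied.

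Next I would translate each hypothesis and conclusion into the language of that proposition. The completeness of $|\partial f|^2 = H\, g_{\mathbb{D}} = e^{-2w} g_{\mathbb{D}}$ is exactly the completeness of $e^{-bw} g_{\mathbb{D}}$ required there. Condition (a), boundedness of $q$ with respect to $g_{\mathbb{D}}$, is condition (1); the energy density $e(f) = e^{-2w} + |q|_{g_{\mathbb{D}}}^2 e^{2w}$ being bounded is condition (3); and quasi-conformality $L/H = |q|_{g_{\mathbb{D}}}^2 e^{4w} \leq k < 1$ is condition (5). Invoking Proposition \ref{n=1 bd} then yields the equivalences (a)$\Leftrightarrow$(b)$\Leftrightarrow$(c) at once.

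For the final claim that $f$ is a diffeomorphism, my strategy would be first to upgrade the orientation-preserving inequality to a strict one (giving a local diffeomorphism) and then to promote this to a global statement by a completeness argument. By Lemma \ref{n=1}, completeness of $e^{-2w} g_{\mathbb{D}}$ forces $|q|_{g_{\mathbb{D}}}^2 e^{4w} < 1$ strictly everywhere, so the Jacobian $J(f) = e^{-2w}(1 - |q|_{g_{\mathbb{D}}}^2 e^{4w})$ is strictly positive and $f$ is an immersion. Since the equivalent conditions yield $L/H \leq k < 1$, we have $f^*h \geq (1-k) H\, g_{\mathbb{D}}$, which is complete by hypothesis. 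Thus $f \colon (\mathbb{D}, f^*h) \to (\mathbb{D}, h)$ is a local isometry with complete domain, hence a Riemannian covering; since the target is simply connected, $f$ must be a diffeomorphism.

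The substantive work is all contained in Proposition \ref{n=1 bd} and Lemma \ref{n=1}, which themselves encode the ideas from Wan's original proof via the key Lemma \ref{Wan}. The only step requiring genuine care is the bookkeeping of the curvature normalizations to confirm $bc = 1/2$; no real obstacle arises.
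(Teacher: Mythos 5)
Your proposal is correct and takes essentially the paper's approach: the theorem is stated there as a citation of Wan, but Proposition \ref{n=1 bd} is built precisely so that its $r=2$ instance recovers it, and the paper's Theorem \ref{harm} is deduced by exactly the dictionary you set up ($a=b=2$, $c=\tfrac14$, $\kappa\equiv -1$, $|\partial f|^2=e^{-2w}g_{\mathbb D}$, with (a),(c),(b) matching conditions (1),(3),(5)). One small correction in your diffeomorphism step: the smallest eigenvalue of $f^{*}h$ relative to $g_{\mathbb D}$ is $(\sqrt{H}-\sqrt{L})^{2}$, so the correct lower bound is $f^{*}h\ge (1-\sqrt{k})^{2}H\,g_{\mathbb D}$ rather than $(1-k)H\,g_{\mathbb D}$ (the latter can fail where $L/H=k$), but this constant still gives completeness of $f^{*}h$ and hence the covering-map conclusion.
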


The universal Teichm\"uller space is the space of quasi-symmetric homeomorphism equipped with $C^0$ topology between $S^1$ fixing three points. By the work of \cite{LiTam} and \cite{Markovic}, equivalently, the universal Teichm\"uller space $\mathcal T(\mathbb D)$ is in bijection with the space of harmonic quasi-conformal homeomorphisms between $\mathbb D$ up to $PSL(2,\mathbb R)$-action. Therefore, by Theorem \ref{WAN}, there is a bijection between the space of bounded quadratic differentials with the universal Teichm\"uller space $\mathcal T(\mathbb D)$.

Li, Tam and Wang in \cite{LiTW} generalized Wan's result to more general surfaces instead of $\mb{D}$ using a similar technique. The hyperbolic Hadamard surfaces are complete, simply connected, Riemannian surfaces with Gaussian curvature $K$ satisfying $-\kappa\leq K\leq 0$ for some constant $\kappa>0$ and have positive lower bounds for their spectra. It is shown in  \cite{LiTW} that the metric of a hyperbolic Hadamard surface is equivalent to the uniformization hyperbolic metric, i.e. the conformal factor is bounded away from $0$ and $\infty$.
\begin{thm}(\cite{LiTW})\label{LiTW}
Let $S_1$, $S_2$ be two hyperbolic Hadamard surfaces. Let $f:S_1\rightarrow S_2$ be a harmonic diffeomorphism. Then the followings are equivalent:\\
(a) the Hopf differential is bounded with respect to the uniformization hyperbolic metric;\\
(b) $f$ is quasi-conformal;\\
(c) the energy density of $f$ is bounded.
\end{thm}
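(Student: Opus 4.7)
The plan is to adapt Wan's scheme to the Hadamard setting, replacing the hyperbolic disk by a surface conformally equivalent to it. The first step is to uniformize: since each $S_i$ is simply connected and complete with non-positive curvature, it is biholomorphic to $\mathbb{D}$, and by the sentence preceding the theorem its metric is related to the uniformization hyperbolic metric $g_{\mathbb{D}}$ by a conformal factor bounded away from $0$ and $\infty$. Under a fixed such biholomorphism, $g_1 = e^{2\phi_1}g_{\mathbb{D}}$ and $h_2 = e^{2\phi_2}g_{\mathbb{D}}$ with $\phi_i$ bounded, and $f$ becomes a harmonic diffeomorphism of $\mathbb{D}$ with these metrics.

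Since harmonicity in real dimension two depends only on the conformal class of the source, and each of the three conditions is stable (up to bounded multiplicative factors) under conformal change of source or target, it suffices to compare them for the source metric $g_{\mathbb{D}}$. Writing the harmonic map equation from $(\mathbb{D},g_{\mathbb{D}})$ to $(\mathbb{D},h_2)$ gives exactly equation (\ref{harmoniceq}), which fits the template of equation (\ref{ab}) with $a=b=2$, $c=1/4$ (so $bc=1/2$) and $\kappa=K_{h_2}/4\in[-\kappa_0/4,0]$. The metric $e^{-2w}g_{\mathbb{D}}=Hg_{\mathbb{D}}$ is equivalent, up to bounded factors, to $f^{\ast}h_2$ restricted to its $(1,0)$ part, hence complete because $f$ is a diffeomorphism onto the complete surface $S_2$.

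With this set-up matching the hypotheses of Proposition \ref{n=1 bd} as closely as possible, I would then run the analogous implications. The Cheng-Yau maximum principle (Lemma \ref{Cheng-Yau}) applied to the equation for $\pm w$ gives the easier directions: condition (a) forces $w$ to be bounded below, hence the energy density $e^{-2w}+|q|^2_{g_{\mathbb{D}}}e^{2w}$ is bounded, giving (c); condition (b) forces $|q|^2_{g_{\mathbb{D}}}e^{4w}\leq k<1$, which combined with the equation yields a lower bound for $w$ and then boundedness of $|q|^2_{g_{\mathbb{D}}}$, giving (a). The nontrivial direction (a) or (c) $\Rightarrow$ (b) uses the key Lemma \ref{Wan}: set $u=1-|q|^2_{g_{\mathbb{D}}}e^{4w}$, combine the equation with the holomorphicity of $q$ to derive $\triangle_{e^{-2w}g_{\mathbb{D}}}u\leq cu$ for some $c>0$ (as in the proof of Lemma \ref{n=1}), and then apply the conformal version of Lemma \ref{Wan} to conclude $u\geq\delta>0$, which is exactly quasi-conformality.

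The main obstacle is that the coefficient $\kappa=K_{h_2}/4$ is only non-positive rather than bounded above by a negative constant, so Proposition \ref{n=1 bd} does not apply verbatim. The escape is the last clause of Lemma \ref{Wan}: when the metric is conformal to $g_{\mathbb{D}}$, one only needs $K_g\leq 0$, and the positive spectral lower bound built into the definition of a hyperbolic Hadamard surface supplies the Poincar\'e-type inequality needed in the mean-value estimate (Proposition \ref{5}) with constants independent of the base point. Tracking these constants through the bounded conformal change between $g_1,h_2$ and $g_{\mathbb{D}}$ is the principal technical step, but introduces no genuinely new ideas beyond those already assembled in Section \ref{Tools}.
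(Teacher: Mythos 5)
The paper does not actually prove this statement: Theorem \ref{LiTW} is quoted from Li--Tam--Wang, and the only result the authors prove in Section \ref{r=2} is the variant Theorem \ref{harm}, in which the target curvature is assumed pinched, $-C_1\leq K_h\leq -C_2<0$. That pinching is precisely what makes the reduction to Proposition \ref{n=1 bd} legitimate, and it is precisely what a hyperbolic Hadamard target need not satisfy: the definition only requires $-\kappa\leq K\leq 0$ together with a positive spectral gap, so $K_{h_2}$ may vanish on a large set. This is why the paper cites the theorem rather than rederiving it.

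Your proposal runs into exactly this obstacle, and the escape you offer does not address it. The final clause of Lemma \ref{Wan} weakens the curvature hypothesis on the \emph{source} metric $g$ (from $-H\leq K_g\leq 0$ to $K_g\leq 0$ when $g$ is conformal to $g_{\mathbb D}$); it does not touch the standing hypothesis $-C_2^{-1}K_g\leq u\leq -C_2K_g$, nor the requirement $\kappa\leq -C_2<0$ in equation (\ref{ab}). Concretely: (i) in the implication from (a) to ``$w$ bounded below'' one needs $\triangle_{g_{\mathbb D}}(-w)\geq C_2e^{b(-w)}-\cdots$ to feed the Cheng--Yau maximum principle, and that coefficient $C_2$ is $\inf(-\kappa)$, which is $0$ here, so Cheng--Yau yields nothing; (ii) in the key step one has $-K_{e^{-bw}g_{\mathbb D}}=2b(-\kappa)u$ for $u=1-|q|^2_{g_{\mathbb D}}e^{(a+b)w}$, so at points where $\kappa=0$ the function $u$ is not bounded above by $-CK$, and the mutual-boundedness hypothesis of Lemma \ref{Wan} fails; the same degeneracy breaks the implication from (b) back to a lower bound on $w$. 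Your appeal to the spectral gap is also misplaced: Proposition \ref{5} only needs local Poincar\'e and Sobolev inequalities on balls of bounded radius, which already hold for any metric equivalent to $g_{\mathbb D}$; the global spectral gap is used in \cite{LiTW} for different purposes (notably to show a Hadamard metric is mutually bounded with the uniformizing hyperbolic metric, the fact the paper quotes just before the theorem), and invoking it does not repair the degeneracy of the coefficient $\kappa$ in the vortex equation. To prove Theorem \ref{LiTW} one genuinely needs the additional arguments of \cite{LiTW}; the machinery assembled in Section \ref{Tools} only yields Theorem \ref{harm}.
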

In Theorem \ref{LiTW}, the assumption of the domain is not essential since the harmonicity only depends on the conformal structure. In fact, we only need to consider $\mb{D}$. From \cite{SchoenRole}, $f$ being a diffeomorphism implies the metric $|\partial f|^2$ being complete.

From Proposition \ref{n=1 bd}, under certain condition, the assumption on the diffeomorphism in Theorem \ref{LiTW}  or orientation-preserving in Theorem \ref{WAN} can be replaced by the completeness of the metric $|\partial f|^2$.
\begin{thm}\label{harm}
Let $(\Sigma,g)$ and $(M,h)$ be two surfaces with Riemannian metrics.  Suppose $g$ is conformal and mutually bounded to a hyperbolic metric. Suppose the curvature of $h$ satisfies $-C_1\leq K_h\leq -C_2$ for some constants $C_1\geq C_2>0$. Let $f$ be a harmonic map from $(\Sigma,g)$ to $(M,h)$. Suppose $|\partial f|^2$ is complete. Then the followings are equivalent:\\
(a) the Hopf differential is bounded with respect to the uniformization hyperbolic metric;\\
(b) $f$ is quasi-conformal;\\
(c) the energy density of $f$ is bounded.

In such cases $f$ is a diffeomorphism.
\end{thm}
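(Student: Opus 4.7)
The plan is to identify the harmonic map equation (\ref{harmoniceq}) with the model equation (\ref{ab}) and invoke Proposition~\ref{n=1 bd}, then derive the diffeomorphism conclusion from the resulting quasi-conformality.

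First I would pass to universal covers. Since $g$ is conformal and mutually bounded with a hyperbolic metric $g_{-1}$ on $\Sigma$, the universal cover $\widetilde\Sigma$ is $\mb{D}$, and the lift of $g_{-1}$ is $g_{\mb{D}}$. The harmonic map equation is conformally invariant in the domain, so $\tilde f \colon (\mb{D},g_{\mb{D}}) \to \tilde M$ is harmonic as well. Each of conditions (a)--(c), together with the completeness of $|\partial f|^2 = Hg$, is invariant under passing to the universal cover and under a conformal change of the domain metric with bounded conformal factor: the ratio $L/H$ is conformally invariant, $|q|_g^2$ and $e(f)$ transform by bounded positive factors, and $Hg$ is itself conformally invariant by direct computation. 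Hence it suffices to prove the theorem for $\tilde f \colon (\mb{D},g_{\mb{D}}) \to \tilde M$.

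With $w = -\tfrac{1}{2}\log H$, equation (\ref{harmoniceq}) is precisely (\ref{ab}) with $a = b = 2$, $c = 1/4$, and $\kappa = K_h/4$; the hypothesis on $K_h$ gives $-C_1/4 \leq \kappa \leq -C_2/4$, and $bc = 1/2$. The metric $e^{-bw}g_{\mb{D}} = Hg_{\mb{D}} = |\partial \tilde f|^2$ is complete by assumption, so Proposition~\ref{n=1 bd} applies and yields the equivalence of its five conditions. Condition (1), $|q|_{g_{\mb{D}}}$ bounded, is (a); condition (3), $|q|_{g_{\mb{D}}}^2 e^{2w} + e^{-2w}$ bounded, is $L + H = e(f)$ bounded, i.e.\ (c); condition (5), $|q|_{g_{\mb{D}}}^2 e^{4w} \leq 1-\delta$, is $L/H \leq 1-\delta$, i.e.\ quasi-conformality, which is (b). This establishes (a)$\Leftrightarrow$(b)$\Leftrightarrow$(c).

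Finally, (b) gives $J(\tilde f) = H - L \geq \delta H > 0$, so $\tilde f$ is an orientation-preserving local diffeomorphism whose pullback metric $|\partial \tilde f|^2$ is complete. Since $\tilde M$ is a Hadamard surface with pinched negative curvature, Wan's argument in \cite{WAN} (equally \cite{LiTW}) upgrades $\tilde f$ to a global diffeomorphism $\mb{D} \to \tilde M$, and equivariance descends this to a diffeomorphism $f \colon \Sigma \to M$. The main obstacle I anticipate is the routine but delicate bookkeeping in the first paragraph: one must verify that the conformal change of domain metric shifts $w$ only by a bounded function and rescales $|q|_g^2$ and $e(f)$ by bounded positive factors, so that all quantities appearing in the five conditions of Proposition~\ref{n=1 bd} translate transparently into (a), (b), (c) of the theorem.
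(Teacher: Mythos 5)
Your proposal is correct and follows essentially the same route as the paper: pass to the universal cover using conformal invariance of the relevant quantities, identify equation (\ref{harmoniceq}) with the model equation (\ref{ab}) (with $a=b=2$, $c=1/4$, $\kappa=K_h/4$, so $bc=1/2$), and read off (a), (b), (c) as conditions (1), (5), (3) of Proposition \ref{n=1 bd}. The paper's proof is terser and leaves the diffeomorphism conclusion implicit (deferring to Wan's argument), which you spell out correctly.
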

\begin{proof}
The harmonic equation (\ref{harmoniceq}) is in the form of equation (\ref{ab}) and satisfies the assumptions in Proposition \ref{n=1 bd}.  Notice that the quasi-conformality and the energy density only depends on the conformal structure of $g$. Since $g$ is conformal and mutually bounded to the hyperbolic metric, we only need to consider the boundedness under the hyperbolic metric. Then by considering the universal cover, Theorem \ref{harm} follows from Proposition \ref{n=1 bd}.
\end{proof}

\section{Hyperbolic affine spheres and bounded cubic differentials}\label{r=3}



In this section, we discuss the relation between bounded cubic differentials and hyperbolic affine spheres in $\mathbb R^3$. First, we review some backgrounds on hyperbolic affine spheres. For more details, one can refer \cite{Loftin0, BenoistHulin, DumasWolf}.

For a non-compact simply connected $2$-manifold $M$, consider a locally strictly convex immersed hypersurface $f:M\rightarrow \mathbb{R}^3$. Affine differential geometry associates to such a locally convex hypersurface a transversal vector field $\xi$, called the affine normal. Being an affine spherical immersion means the affine normal meets at a point (the center). By applying a translation, we can move the center of the affine sphere to the origin and write $\xi(p)=-Hp$, for all $p\in f(M)\subset \mathbb{R}^3$ for some constant $H\in \mathbb{R}$, the affine curvature. In the case when $H$ is negative, call the affine spherical immersion hyperbolic. After renormalization, we obtain a hyperbolic affine spherical immersion with center $0$ and of affine curvature $-1$, called normalized hyperbolic affine sphere.

Decomposing the standard connection $D$ of $\mathbb{R}^3$ into tangent direction of $f(M)$ and affine normal components:
\[D_XY=\nabla_XY+h(X,Y)\xi, \quad \forall X,Y\in T_{f(p)}f(M).\]
The second fundamental form $h$ of the image $f(M)$ relative to the affine normal $\xi$ can define a Riemannian metric $h$ on $M$, the \textit{Blaschke metric}. This induces a complex structure on $M$. Also, the decomposition defines an induced connection $\nabla$ on $TM$. Let $\nabla^h$ be the Levi-Civita connection of the Blaschke metric $h$ and the Pick form $A(X,Y,Z)=h((\nabla-\nabla^h)_XY,Z)$ is a $3$-tensor, which uniquely determines a cubic differential $q=q(z)dz^3$ such that $\Re q=A$, the \textit{Pick differential}.

We call an affine sphere complete if its Blaschke metric is complete.
By the work of Cheng-Yau \cite{ChengYau77, ChengYau86} and An-Min Li \cite{Li92}, there is a correspondence between properly convex domains in $\mathbb RP^2$ and complete hyperbolic affine spheres. An open subset $\Omega\subset \mathbb RP^2$ is properly convex if it corresponds to a bounded convex domain in $\mathbb R^2$ when restricted to an affine chart. For a properly convex domain $\Omega$ in $\mathbb RP^2$, denote by $C(\Omega)$ one of the two open convex cones above $\Omega$. 
Given a properly convex domain $\Omega\subset\mathbb RP^2$, there exists a unique normalized hyperbolic affine sphere, which is asymptotic to the boundary of the cone $C(\Omega)$. Moreover, the Blaschke metric on the affine sphere is complete. Conversely, every normalized complete hyperbolic affine sphere is asymptotic to the boundary of a cone above a properly convex subset of $\mathbb RP^2$.

Here we focus on the affine spherical immersions where $(M,h)$ is conformal to the hyperbolic disk $(\mathbb{D}, g_{\mathbb D}=\sigma(z)|dz|^2)$. In this case, we can reparametrize the hyperbolic affine spherical immersion by $f:\mathbb{D}\rightarrow \mathbb{R}^3$. Write the Blaschke metric $h=e^w\cdot g_{\mathbb D}$ and $q=q(z)dz^3$. Following Wang \cite{Wang} and Simon-Wang \cite{SimonWang}, $f$ being an affine spherical immersion is equivalent to $q$ being holomorphic and $(w,q)$ satisfying
\begin{equation}\label{WangEquation}
\triangle_{g_{\mathbb D}}w=2e^w-4|q|_{g_{\mathbb  D}}^2e^{-2w}-2.\end{equation}
Up to a change of constants, this coincides with the Toda equation in $r=3$ case for a real solution.
The curvature of the Blaschke metric $h=e^w\cdot g_{\mathbb D}$ is \[k_h=-\frac{1}{2}\triangle_h w=-1+2|q|_{g_{\mathbb D}}^2e^{-3w}.\]



Then Lemma \ref{n=1} implies that the curvature of the Blaschke metric of a complete affine sphere is non-positive, which was proved by Calabi \cite{Calabi}.

As a direct corollary of Proposition \ref{n=1 bd} to $r=3$, we can recover the following theorem shown by Benoist and Hulin.
\begin{thm}(Benoist-Hulin \cite{BenoistHulin})\label{Rank3}
For a complete hyperbolic affine spherical immersion $f: \mathbb D\rightarrow \mathbb R^3$, the followings are equivalent:\\
(a) the Pick differential is bounded with respect to the hyperbolic metric;\\
(b) the affine metric has curvature bounded above by a negative constant;\\
(c) the affine metric is conformally bounded with respect to the hyperbolic metric.
\end{thm}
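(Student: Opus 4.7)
The plan is to recognize equation (\ref{WangEquation}) as a single equation of the form (\ref{ab}) with $bc=\tfrac12$, and then invoke Proposition \ref{n=1 bd} directly. The completeness of the Blaschke metric is the geometric hypothesis that feeds the completeness assumption of that Proposition.

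First, I would put (\ref{WangEquation}) into the required form. Substituting $w\mapsto -w'$ together with the rescaling $\tilde q=\sqrt{2}\,q$ (so $|\tilde q|_{g_{\mathbb D}}^2=2|q|_{g_{\mathbb D}}^2$), equation (\ref{WangEquation}) becomes
\[ \triangle_{g_{\mathbb D}} w' \;=\; 2\,|\tilde q|_{g_{\mathbb D}}^2\, e^{2w'}\;-\;2\, e^{-w'}\;+\;2, \]
which is precisely (\ref{ab}) with $\kappa\equiv -2$, $a=2$, $b=1$, $c=\tfrac12$ and background $g_{\mathbb D}$ (the constant $+2$ absorbs as $-cK_{g_{\mathbb D}}$ with the normalization used in this section, the same one for which (\ref{WangEquation}) coincides up to constants with the Toda $r=3$ real equation). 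In particular the relation $bc=\tfrac12$ singled out in Proposition \ref{n=1 bd} holds. The Blaschke metric reads
\[ h \;=\; e^{w}g_{\mathbb D} \;=\; e^{-w'} g_{\mathbb D} \;=\; e^{-bw'}\, g_{\mathbb D}, \]
so completeness of the hyperbolic affine sphere coincides with the completeness hypothesis ``$e^{-bw}g_{\mathbb D}$ complete'' of Proposition \ref{n=1 bd}.

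Second, I would translate the equivalences (1)--(5) of Proposition \ref{n=1 bd} back through the substitution. Condition (1), the boundedness of $|\tilde q|_{g_{\mathbb D}}$, is the boundedness of the Pick differential, that is (a). Condition (2), the boundedness of $|w'|=|w|$, is the conformal boundedness of $h=e^{w}g_{\mathbb D}$ relative to $g_{\mathbb D}$, that is (c). Condition (4), the upper bound $K_{e^{-bw'}g_{\mathbb D}}\le -\delta$, reads $K_h\le -\delta$, that is (b). Hence (a)$\Leftrightarrow$(b)$\Leftrightarrow$(c).

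There is no substantive obstacle: Proposition \ref{n=1 bd} is already at hand. The only care needed is in the bookkeeping of signs and scalings so that after the substitution one genuinely recovers the hypotheses $\kappa<0$, $a,b,c>0$, $bc=\tfrac12$ required by (\ref{ab}), and then a routine translation of the five equivalent statements yields the theorem.
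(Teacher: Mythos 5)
Your proposal is correct and is essentially the paper's own argument: the paper obtains Theorem \ref{Rank3} precisely as a direct corollary of Proposition \ref{n=1 bd}, viewing Wang's equation (\ref{WangEquation}) as the $r=3$ instance of (\ref{ab}) with $bc=\tfrac12$ after the sign flip $w\mapsto -w$, with completeness of the Blaschke metric supplying the completeness hypothesis and conditions (1), (2), (4) translating into (a), (c), (b). The only caution is that your specific constants ($\kappa\equiv -2$, $c=\tfrac12$) presuppose a particular curvature normalization of $g_{\mathbb D}$; since the product $bc$ is invariant under rescaling $w$, $q$ and the background metric, this is harmless bookkeeping, exactly as you note.
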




Let's briefly explain the original proof from (c) to (b) in Benoist-Hulin \cite{BenoistHulin}. On a properly convex domain, one can also define the Hilbert metric.  Their proof is by adding the following three equivalent conditions:\\
(d) $\Omega$ equipped with the affine metric is Gromov hyperbolic; \\
(e) $\Omega$ equipped with the Hilbert metric is Gromov hyperbolic;\\
(f) the orbit closure $\overline{SL(3,\mathbb R)\cdot \Omega}$ in $X$ does not contain the projective triangle.\\
In this way, the authors in \cite{BenoistHulin} show that for a properly convex domain $\Omega$, the Pick differential is bounded with respect to the hyperbolic metric if and only if $\Omega$ is Gromov hyperbolic with respect to the Hilbert metric. One may view the moduli space of Gromov hyperbolic convex sets (in Hilbert metric or affine metric) as a generalization of the universal Teichm\"uller space to rank $3$.

The proof of (c)$\Rightarrow$(b) in \cite{BenoistHulin} is by showing (c)$\Rightarrow$(d)$\Rightarrow$(e)$\Rightarrow$(f)$\Rightarrow$(b), which relies on
the following two facts:\\
(i) Continuity dependence of the curvature function: the curvature of the affine metric depends continuously on the pair $(x,\Omega)\in \mathcal E$, due to Benoist-Hulin \cite{BenoistHulinFiniteVolume}.\\
(ii)
Benz\'ecri compactness \cite{Benzecri}: Consider $\mathcal E$ the set of pairs $(x,\Omega)$ where $\Omega\subset \mathbb RP^2$ is a properly convex domain and $x$ is a point in $\Omega$. The natural action of $SL(3,\mathbb R)$ on the space of $\mathcal E$, equipped with the Hausdorff topology is compact. \\

(c) $\Rightarrow$ (d): it follows from that quasi-isometry preserves Gromov hyperbolicity.

(d) $\Rightarrow$ (e): it uses the fact the densities $\mu_{\text{Hilbert}}$ and $\mu_{\text{affine}}$ are uniformly bounded with respect to each other which follows from Benz\'ecri's compactness.

(e) $\Rightarrow$ (f): it follows from the fact that the limit of a sequence of Gromov $\delta$-hyperbolic spaces is still Gromov $\delta$-hyperbolic.

(f) $\Rightarrow$ (b): it uses Benz\'ecri's compactness and the continuity dependence of the curvature function. \\

Note that our proof here for Theorem \ref{Rank3} bypasses Benz\'ecri compactness and the continuity dependence of the curvature function, only uses Wang's equation itself.

\section{Complete maximal surfaces in $\mathbb H^{2,n}$ and bounded quartic differentials}\label{MaximalSurface}
\subsection{Maximal surfaces in $\mathbb H^{2,n}$}
In this section, we discuss the relation between bounded quartic differentials and complete maximal surfaces in $\mathbb H^{2,n}$. First, we review some backgrounds on maximal surfaces in $\mathbb H^{2,n}$ and their relationships with $SO_0(2,n+1)$-Higgs bundles  developed in \cite{CTT}.

Let $E$ be an $(n+3)$-dimensional real vector space equipped with a signature $(2,n+1)$ quadratic form $Q$. The pseudo-hyperbolic space is defined by
$$\mathbb H^{2,n}:=\{x\in E, Q(x)=-1\}/\{\pm Id\}. $$
Then the group $SO(2,n+1)$ isometrically acts on $\mathbb H^{2,n}$. A spacelike surface in $\mathbb H^{2,n}$ is an immersion of a connected 2-dimensional manifold into $\mathbb H^{2,n}$ whose induced metric is positive definite.

Let $\Sigma$ be a Riemann surface with fundamental group $\pi_1$, $\ti{\Sigma}$ be its universal cover. Let $\rho:\pi_1\rightarrow SO_0(2,n+1)$ be a representation. Let $f:\ti{\Sigma}\rightarrow \mathbb H^{2,n}$ be a spacelike, conformal and $\rho$-equivariant immersion. Consider the trivial bundle $\ti{\Sigma}\times \mb{R}^{2,n+1}$. At each point $p\in \ti{\Sigma}$, the immersion $f$ gives a decomposition $\mb{R}^{2,n+1}=\ti{T}_{f(p)}\oplus\ti{l}_{f(p)}\oplus\ti{N}_{f(p)}$, where $\ti{T}_{f(p)}$ is the tangent space of $f(\ti{\Sigma})$ at $f(p)$, $\ti{l}_{f(p)}$ is the position line $\mb{R}\cdot f(p)$ and $\ti{N}_{f(p)}$ is the normal space of $f(\ti{\Sigma})$ in $T_{f(p)}\mathbb H^{2,n}$ at $f(p)$. Since $f$ is $\rho$-equivariant, $\pi_1$ also acts on and preserves the decomposition $\ti{\Sigma}\times \mb{R}^{2,n+1}=\ti{T}\oplus\ti{l}\oplus\ti{N}$. Then the decomposition of the vector bundle over $\ti{\Sigma}$ descends to $\Sigma$, denoted as $T\oplus l\oplus N.$ Notice that $\ti{T}$ is $\pi_1$-equivariant isomorphic to $\ti{l}\otimes T\ti{\Sigma}$. So we construct a vector bundle over $\Sigma$, which is $T\oplus l\oplus N\cong(l\otimes T\Sigma)\oplus l\oplus N.$

Notice that the standard connection $D$ on $\mb{R}^{2,n+1}$ also descends to $T\oplus l\oplus N$ as
$$D=\left(\begin{smallmatrix}\nabla_T&B&\Pi^{\dagger}\\
B^{\dagger}&\nabla_l& 0\\
\Pi&0&\nabla_N\end{smallmatrix}\right).$$ Here $\Pi\in \Omega^1(\Sigma, \text{Hom}(T, N))$ is called the second fundamental form, $\dagger$ means the adjoint with respect to the corresponding inner product.

\begin{df}
(1) A spacelike surface in $\mathbb H^{2,n}$ is called complete if its induced metric is complete.\\
(2) A spacelike surface in $\mathbb H^{2,n}$ is called maximal if $\tr_{g}\Pi=0$ where $g$ is the induced metric.
\end{df}
\begin{rem}
(1) By \cite[Proposition 3.10]{labourie2020plateau}, a complete maximal surface is an entire graph.\\
(2) In the case when
\end{rem}
Now we relate maximal surfaces in $\mathbb H^{2,n}$ to $SO_0(2,n+1)$-Higgs bundles. Let $K$ be the canonical line bundle of $\Sigma.$
\begin{df}
An $SO_0(2,n+1)$-Higgs bundle over a Riemann surface $\Sigma$ is a tuple $(\mathcal U, q_{\mathcal U}, \mathcal V, q_{\mathcal V}, \eta)$ where
\begin{itemize}
    \item $\mathcal U, \mathcal V$ are respectively of rank 2 and of rank $(n+1)$ holomorphic vector bundles over $\Sigma$ with trivial determinant line bundle $\wedge^2\mathcal U\cong \mathcal O, \wedge^{n+1}\mathcal V\cong \mathcal O.$
    \item $q_{\mathcal U}, q_{\mathcal V}$ are non-degenerate holomorphic sections of $\Sym^2(\mathcal U^*)$ and $\Sym^2(\mathcal V^*)$.
    \item $\eta$ is a holomorphic section of $\Hom(\mathcal U, \mathcal V)\otimes K$
\end{itemize}
\end{df}
Given an $SO_0(2,n+1)$-Higgs bundle $(\mathcal U, q_{\mathcal U}, \mathcal V, q_{\mathcal V}, \eta)$, the associate $SL(n+3,\mathbb C)$-Higgs bundles is $(\mathcal E, \phi)$ by setting $\mathcal E=\mathcal U\oplus \mathcal V$ and $\phi=\begin{pmatrix}0&\eta^{\dagger}\\\eta&0\end{pmatrix}:\mathcal U\oplus \mathcal V\rightarrow (\mathcal U\oplus \mathcal V)\otimes K$, where $\eta^{\dagger}=q_{\mathcal U}^{-1}\circ \eta^*\circ q_{\mathcal V}\in H^0(\Hom(\mathcal V,\mathcal U)\otimes K).$

A Higgs bundle is conformal if the corresponding harmonic map is conformal. By the work in \cite{CTT}, a maximal conformal $SO_0(2,n+1)$-Higgs bundle is determined by $(I,\mathcal V_0, q_{\mathcal V_0}, \beta)$:
$$(\mathcal U,q_{\mathcal U},\mathcal V, q_{\mathcal V}, \eta)=(IK\oplus IK^{-1},\begin{pmatrix}0&1\\1&0\end{pmatrix}, I\oplus \mc{V}_0, \begin{pmatrix}1&0\\0&q_{\mathcal V_0}\end{pmatrix}, \begin{pmatrix} 1&0\\0&\beta\end{pmatrix}),$$
where $I$ is a holomorphic line bundle satisfying $I^2=\mathcal O$, $\mathcal V_0$ is a holomorphic vector bundle of rank $n$ satisfying $\wedge^n\mathcal V_0=I$, $q_{\mathcal V_0}$ is a non-degenerate holomorphic section of $\Sym^2(\mathcal V_0^*)$ and $\beta\in H^0(\Hom(IK^{-1}, \mathcal V_0)\otimes K)$. The original definition for ``maximal" was for compact Riemann surface of genus $g\geq 2.$ Here we adopt this definition for general (possibly non-compact) Riemann surfaces.

Written in the form of $SL(n+3,\mathbb C)$-Higgs bundle, the conformal maximal $SO(2,n+1)$-Higgs bundle above becomes
\begin{equation}
E=IK\oplus IK^{-1}\oplus I\oplus \mc{V}_0,\quad
\phi=\left(\begin{smallmatrix}0&0&0&\beta^\dagger\\
0&0&1&0\\
1&0&0&0\\
0&\beta&0&0
\end{smallmatrix}\right),\label{Higgsbundle}
\end{equation}
where $\beta^\dagger=\beta^*\circ q_{\mc{V}_0}$.
Consider the Hermitian metric solving the Hitchin equation of the form
$$h=\diag(h_{IK}, h_{IK}^{-1}, h_I, h_{\mc{V}_0}).$$
Then the Hitchin equation simplifies to
\begin{eqnarray*}
&&F_{h_{IK}}+\beta^\dagger\wedge (\beta^\dagger)^{*_h}+1^{*_h}\wedge 1=0\\
&&F_{h_{\mc{V}_0}}+\beta\wedge (\beta)^{*_h}+(\beta^\dagger)^{*_h}\wedge \beta^\dagger=0.
\end{eqnarray*}
Let $\ti{\Sigma}$ be a maximal surface in $\mb{H}^{2,n}$. From the discussion above, we construct a vector bundle $T\oplus l\oplus N$ over $\Sigma$. Let $g_T, g_l, g_N$ be the induced metric correspondingly. Consider the complexification of the vector bundle, $T^{\mb{C}}\oplus l^{\mb{C}}\oplus N^{\mb{C}}$. Let $g_T^{\mathbb C}, g_l^{\mathbb C}, g_N^{\mathbb C}, h_T, h_l, h_N$ be the complex linear extension and Hermitian extension of $g_T, g_l, g_N$. The data of Higgs bundles are collected as follows. Notice that $T\cong l\otimes T\Sigma$, $T^{\mb{C}}\Sigma\cong \overline{K^{-1}}\oplus K^{-1}$, and $\overline{K^{-1}}\cong K$ by the Hermitian metric.
\begin{itemize}
\item $(l^{\mathbb C}, (\nabla_{l}^{\mb{C}})^{(0,1)}, h_l)$ gives $(I, \bar{\partial}_{I}, h_I)$.
\item ($T^{\mathbb C}, (\nabla_{T}^{\mb{C}})^{(0,1)}, g_T^{\mathbb C}, h_T)$ gives the orthogonal Hermitian bundle
$$(IK\oplus IK^{-1}, (\bar{\partial}_{I}\otimes\bar{\partial}_{K})\oplus(\bar{\partial}_{I}\otimes\bar{\partial}_{K^{-1}}), \begin{pmatrix}0&1\\1&0\end{pmatrix}, h_{IK\oplus IK^{-1}}).$$
\item $(N^{\mathbb C}, (\nabla_{N}^{\mb{C}})^{(0,1)},g_N^{\mathbb C}, h_N)$ gives the orthogonal Hermitian bundle $(\mc{V}_0, \bar{\partial}_{\mc{V}_0}, q_{\mc{V}_0}, h_{\mc{V}_0})$.

\item The $(1,0)$-part of the second fundamental form $\Pi$ gives $\beta.$
\end{itemize}
The maximality of surfaces implies the Higgs field $\phi$ is holomorphic. Moreover the $(1,0)$-part of the shape operator $B$ gives $\beta^\dagger$, and the standard connection $D^{\mathbb C}=\nabla_h+\phi+\phi^{*_h},$ where $\nabla_h$ is the Chern connection.
Denote by $q_4$ the holomorphic quartic differential $\beta^\dagger\beta=q_{\mathcal V_0}(\beta,\beta)$. By the straight calculation, we obtain that $\tr(\phi^j)=0$ if $j\neq 0(\text{mod} 4)$ and $\tr(\phi^{4j})=4q_4^j$. Therefore, $\beta^\dagger\beta$ only captures the spectral data of $\phi$.

We may consider the geometric objects from the $\rho$-equivariant maximal surface $f:\ti{\Sigma}\rightarrow \mb{H}^{2,n}$, such as the pullback metric $g_f$ and the curvature $k_{\ti{\Sigma}}$ of $g_f$. By the $\rho$-equivariance, they can all be descended to $\Sigma$. We abuse the notation $g$ and $k$ for short if there is no confusion.

We can also build the theory above starting from a maximal surface $X$ as an immersed submanifold in $\mb{H}^{2,n}$. The complex structure of $X$ is from its induced metric. For simplicity, now we consider that $X$ is topologically the disc $\mb{D}$ and omit the representation. In this case the position line bundle $I$ is trivial.

\begin{prop} (\cite[Proposition 4.5 and Equation (20)]{LT})\label{CurvatureBound}
Let $X$ be a maximal surface in $\mathbb H^{2,n}$. Let $g$, $k$ be the induced metric and its curvature. Then
\begin{eqnarray}
&&k=-1+|\beta|^2_h=-1+\frac{1}{2}|\Pi|_g^2\geq -1.\label{Gauss equation}\\
&&\triangle_gk\geq k(1+k).\label{CurvatureEquationReal}\end{eqnarray}
\end{prop}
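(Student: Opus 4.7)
The plan is to prove both parts by applying, respectively, the Gauss equation of a spacelike submanifold in a constant curvature ambient space, and a Bochner--type formula for the holomorphic section $\beta$ combined with the Hitchin equation of the Higgs bundle \eqref{Higgsbundle}.

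For the identity $k = -1 + |\beta|_h^2$, I would first write the Gauss equation for the conformal immersion $X \hookrightarrow \mathbb H^{2,n}$. Since the ambient space has constant sectional curvature $-1$ and the normal bundle $N$ inherits a negative definite inner product from the signature $(0,n)$ of the normal directions, the classical Gauss equation in an orthonormal frame $\{e_1,e_2\}$ yields
$$k = -1 - \langle \Pi(e_1,e_1),\Pi(e_2,e_2)\rangle_N + \langle \Pi(e_1,e_2),\Pi(e_1,e_2)\rangle_N.$$
The maximality condition $\Pi(e_1,e_1) + \Pi(e_2,e_2) = 0$ reduces the two quadratic terms in $\Pi$ to the manifestly positive quantity $|\Pi(e_1,e_1)|^2 + |\Pi(e_1,e_2)|^2 = \tfrac{1}{2}|\Pi|_g^2$, where the norms on the right are taken with respect to the positive definite inner product $-\langle\cdot,\cdot\rangle_N$. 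The Higgs-bundle dictionary recalled before the proposition identifies the $(1,0)$-part of $\Pi$ with $\beta\in H^0(\Hom(IK^{-1},\mathcal V_0)\otimes K)$ and equates the associated pointwise Hermitian norms, giving $|\beta|_h^2 = \tfrac{1}{2}|\Pi|_g^2$, hence the claimed formula.

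For the inequality \eqref{CurvatureEquationReal}, set $u := |\beta|_h^2 = 1+k$ so that the claim becomes $\triangle_g u \geq u(u-1)$. Because $\beta$ is a holomorphic section of the Hermitian bundle $\Hom(IK^{-1},\mathcal V_0)\otimes K$, a Bochner--type computation in local holomorphic coordinates (analogous to Lemma \ref{SimpsonEstimate}) gives
$$\partial_z\partial_{\bar z} \log u \;\geq\; -\,\text{(curvature of induced Chern connection applied to }\beta),$$
after discarding the non-negative Kato-type term $|\nabla_z \beta|_h^2 / u$. The induced curvature on $\Hom(IK^{-1},\mathcal V_0)\otimes K$ decomposes as the difference of $F_{h_{\mathcal V_0}}$ and $F_{h_{IK^{-1}}}$, plus the canonical contribution $F_{h_K}$. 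The Hitchin equation for \eqref{Higgsbundle} writes each of $F_{h_{IK}}$, $F_{h_I}$, $F_{h_{\mathcal V_0}}$ explicitly in terms of the block entries $\beta,\beta^\dagger$ and the constant $1$ in $\phi$; the contributions involving $\beta$ and $\beta^\dagger$ reassemble, after tracing against $\beta/|\beta|_h^2$, to a quantity proportional to $|\beta|_h^2$, while the ``$1$-block'' together with $F_{h_K}$ (which itself is related to $k$ through $g$ being the induced metric) produces the additive $-1$. Pairing everything carefully yields $\triangle_g u \geq u(u-1)$.

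The main technical obstacle is bookkeeping the several Hitchin identities across the four factors $IK$, $IK^{-1}$, $I$, $\mathcal V_0$ and tracking signs: the ambient signature $(2,n+1)$ produces the negative definiteness of the inner product on $N$ responsible for the correct sign of the second fundamental form terms in the Gauss equation, while on the Higgs bundle side the pairings between $\beta$ and $\beta^\dagger$ must be consistent with the isotropic and orthogonal structures imposed by $q_{\mathcal U}$ and $q_{\mathcal V}$. Once these identifications are in place and the Hitchin equation is applied block by block, both statements reduce to direct algebraic manipulations.
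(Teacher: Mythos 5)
Your proposal is correct in outline, and it is worth comparing with the paper's proof because the two arguments are organized differently. For the identity $k=-1+|\beta|_h^2$ you use the classical Gauss equation of a spacelike maximal surface in the constant-curvature ambient $\mathbb H^{2,n}$ together with the dictionary $|\beta|_h^2=\tfrac12|\Pi|_g^2$; this is precisely the Labourie--Toulisse route that the paper's own remark mentions, whereas the paper derives the same identity purely on the Higgs-bundle side by applying Lemma \ref{SimpsonEstimate} to the section $s_1$ (the constant entry $1:IK\to I$ of the Higgs field) and checking that the Cauchy--Schwarz step is an equality there because $\Hom(IK,I)$ is a line bundle. Both are legitimate; the paper's choice makes the first half uniform with the second and avoids any appeal to the extrinsic Gauss equation. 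A minor point: as written, your Gauss equation carries the two $\Pi$-terms with signs that, combined with your stated convention for $\langle\cdot,\cdot\rangle_N$, would give $k\leq -1$; with maximality and the negative definiteness of the normal metric the correct combination is $+\langle\Pi(e_1,e_1),\Pi(e_2,e_2)\rangle_N-\langle\Pi(e_1,e_2),\Pi(e_1,e_2)\rangle_N$, which is nonnegative and yields $k\geq -1$.

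For the inequality (\ref{CurvatureEquationReal}), your Bochner computation for $\beta$ is essentially the paper's application of Lemma \ref{SimpsonEstimate} to the section $s_2$ built from $\gamma$, but two steps hidden in your phrase ``reassemble'' must be made explicit. First, the Hitchin equation produces the cross term $|\gamma^\dagger\gamma|_h^2/|\gamma|_h^2$, which is \emph{not} proportional to $|\beta|_h^2$; the entire content of the estimate is the Cauchy--Schwarz bound $|\gamma^\dagger\gamma|_h\leq|\gamma|_h\,|\gamma^\dagger|_h=|\gamma|_h^2$, and this is exactly where the statement degrades from an identity to an inequality. Second, you announce the target $\triangle_g u\geq u(u-1)$ but only derive a lower bound for $\partial_z\partial_{\bar z}\log u$; one still needs $\triangle_g u\geq u\,\triangle_g\log u$, valid where $u>0$, i.e.\ away from the zeros of $\beta$, and then extended to the zero set by continuity of both sides, as the paper does. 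Neither point is difficult, but both are required to close the argument.
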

\begin{rem}
The original proof in \cite{LT} uses the Gauss equation of a maximal surface and the Bochner formula. Here we derive these two equations in terms of Higgs bundles for its own interest as follows. Once one makes the explicit  correspondence between the standard connection between the Higgs bundle and harmonic metric, these two methods are essentially the same.
\end{rem}
\begin{proof}
Suppose locally with respect to a holomorphic frame $e_1, e_2, \cdots, e_{n+3}$ of $E$ where $e_1,e_2, e_3$ are frames of $IK,IK^{-1},I$ respectively which are related by $e_1\cdot\partial/\partial z=e_3=e_2dz$, and $\{e_4, \cdots, e_{n+3}\}$ is a frame of $\mc{V}_0$. Then $\phi=fdz, \beta=\gamma dz$, so $$f=\left(\begin{smallmatrix}0&0&0&\gamma^{\dagger}\\
0&0&1&0\\
1&0&0&0\\
0&\gamma&0&0
\end{smallmatrix}\right),\quad f^{*_h}=\left(\begin{smallmatrix}
0&0&1^{*_h}&0\\
0&0&0&\gamma^{*_h}\\
0&1^{*_h}&0&0\\
(\gamma^{\dagger})^{*_h}&0&0&0\end{smallmatrix}\right).$$ Here $1:IK\rightarrow I$ means to contract $\frac{\partial}{\partial z}$. Then $|1|^2_h=h(\frac{\partial}{\partial z},\frac{\partial}{\partial z})$. So the induced Hermitian metric is $h=|1|^2_hdz\otimes d\bar{z}$, and the induced Riemannian metric is $g=h+\bar{h}=2|1|^2_h|dz|^2$.

We apply Lemma \ref{SimpsonEstimate} to a local holomorphic section $s_1=\left(\begin{smallmatrix}0&0&0&0\\
0&0&0&0\\
1&0&0&0\\
0&0&0&0
\end{smallmatrix}\right).$ Since locally,
\[[f^{*_h},s_1]=\left(\begin{smallmatrix}1^{*_h}\circ 1&&&\\
&0&&\\
&&-1\circ 1^{*_h}&\\
&&&0\end{smallmatrix}\right),\quad
[f,s_1]=\left(\begin{smallmatrix}
0&0&0&0\\
1\circ 1&0&0&0\\
0&0&0&-1\circ \gamma^{\dagger}\\
0&0&0&0
\end{smallmatrix}\right),
\]
then we have
\begin{equation}\label{HitchinEquation1}
\begin{array}{cccc}
\partial_z\partial_{\bar z}\log |1|_h^2=\frac{|[s_1,f^{*_h}]|_h^2-|[s_1,f]|_h^2}{|s_1|_h^2}
=\frac{2|1|_h^4-|1|_h^4-|1|_h^2\cdot|\gamma^{\dagger}|_h^2}{|1|_h^2}=|1|_h^2-|\gamma|_h^2.
\end{array}
\end{equation}
We explain why it is equality here but not inequality. Notice that in \ref{SimpsonEstimate} the inequality only happens at $|h(\partial_{z,h}s,s)|^2\leq |\partial_{z,h}s|_h^2|s|^2_h.$ So we only need to show $(\partial_{z,h})_{\frac{\partial}{\partial z}}s=\lambda s$ for some function $\lambda$ on $X$. Recall $\partial_{z,h}$ is the $(1,0)$ part of the Chern connection $\nabla_{h}$ on $E=IK\oplus IK^{-1}\oplus I\oplus \mc{V}_0$. Notice that both the complex structure and the Hermitian metric are diagonal, so the Chern connection $\nabla_{h}$ and then $\partial_{z,h}$ are also diagonal. Since $IK$ and $I$ are both $1$-dimensional, $\Hom(IK,I)$ is also $1$-dimensional. So $(\partial_{z,h})_{\frac{\partial}{\partial z}}s=\lambda s$ for some function $\lambda$ on $X$.

Since $g=2Re(|1|_h^2dz\otimes d\bar z)$, we obtain \[k=-\frac{2}{2|1|^2_h}\partial_z\partial_{\bar z}\log 2|1|^2_h=-1+\frac{|\gamma|_h^2}{|1|_h^2}=-1+|\beta|^2_h.\]
Since $\beta$ is the $(1,0)$-part of the second fundamental form $\Pi$, we have $|\beta|_h^2=\frac{1}{2}|\Pi|_g^2$. Therefore we prove the first equation.

We apply Lemma \ref{SimpsonEstimate} to a local holomorphic section $s_2=\left(\begin{smallmatrix}0&0&0&0\\
0&0&0&0\\
0&0&0&0\\
0&\gamma&0&0
\end{smallmatrix}\right).$ Since locally,
\[[f^{*_h},s_2]=\left(\begin{smallmatrix}0&&&\\
&\gamma^{*_h}\gamma&&\\
&&0&\\
&&&-\gamma\gamma^{*_h}\end{smallmatrix}\right),\quad
[f,s_2]=\left(\begin{smallmatrix}
0&\gamma^\dagger \gamma&0&0\\
0&0&0&0\\
0&0&0&0\\
0&0&-\gamma\circ 1&0
\end{smallmatrix}\right),
\]
then we have
\begin{eqnarray}\label{HitchinEquation2}
\partial_z\partial_{\bar z}\log |\gamma|_h^2&\geq& \frac{|[s_2,f^{*_h}]|_h^2-|[s_2,f]|_h^2}{|s_2|_h^2}\nonumber\\
&=&\frac{2\text{tr}(\gamma\gamma^{*_h}\gamma\gamma^{*_h})-|\gamma^\dagger\gamma|_h^2-|\gamma\circ 1|_h^2}{|\gamma|_h^2}\\
&=&\frac{2|\gamma|_h^4-|\gamma^\dagger\gamma|_h^2-|1|_h^2\cdot|\gamma|_h^2}{|\gamma|_h^2}\nonumber\\
&\geq&|\gamma|_h^2-|1|_h^2\nonumber,
\end{eqnarray}
where the last inequality follows from the Cauchy-Schwarz inequality $|\gamma^\dagger\gamma|_h\leq |\gamma|_h\cdot|\gamma^\dagger|_h=|\gamma|_h^2$.

Combining Equations (\ref{HitchinEquation1}) and (\ref{HitchinEquation2}), we obtain locally
\begin{equation*}
\partial_z\partial_{\bar z}\log\frac{|\gamma|_h^2}{|1|_h^2}\geq 2(|\gamma|_h^2-|1|_h^2).
\end{equation*}

Globally, we obtain
\begin{equation*}
\triangle_g\log \frac{|\gamma|_h^2}{|1|_h^2}=\frac{1}{2|1|_h^2}\partial_z\partial_{\bar z}\log\frac{|\gamma|_h^2}{|1|_h^2}\geq (\frac{|\gamma|_h^2}{|1|_h^2}-1)=k.
\end{equation*}

Then away from zeros of $\gamma$, we have
\begin{equation*}
\triangle_gk=\triangle_g\frac{|\gamma|_h^2}{|1|_h^2}\geq \frac{|\gamma|_h^2}{|1|_h^2}\cdot\triangle_g\log \frac{|\gamma|_h^2}{|1|_h^2}=k(1+k).
\end{equation*}
Since both sides of the above equation are continuous, the equation holds on the whole surface.
So we prove the second equation.
\end{proof}

The following theorem is the curvature rigidity theorem in \cite[Theorem A]{LT}. With the curvature formula above, we give a different proof.
\begin{prop}\label{CurvatureNegative}
Let $X$ be a complete maximal surface in $\mathbb H^{2,n}$. Then the intrinsic curvature $k$ satisfies either $k<0$ or $k\equiv 0$.
Equivalently, in terms of Higgs bundles, $|\beta|_h^2< 1$ or $|\beta|_h^2\equiv 1$.
\end{prop}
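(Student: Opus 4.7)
The plan is a direct application of the machinery already developed. Proposition \ref{CurvatureBound} gives two crucial pieces of information: first, the Gauss equation $k = -1 + |\beta|_h^2$ shows that $k \geq -1$, so the intrinsic curvature is bounded from below; second, the Bochner-type inequality
\[
\triangle_g k \geq k(k+1)
\]
holds on all of $X$. Since $X$ is assumed complete, the hypotheses of Lemma \ref{KEYLEMMA} are satisfied with the constants $c = 1$ and $d = 1$.

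First I would verify that the dichotomy in Lemma \ref{KEYLEMMA} applies: completeness of $g$ and the lower bound $k \geq -1$ give the background assumptions, and the differential inequality above is exactly the form $\triangle_g K_g \geq c K_g(K_g + d)$ required. The Cheng-Yau maximum principle argument inside Lemma \ref{KEYLEMMA} then forces $\sup k \leq 0$, and the strong maximum principle applied to the same inequality gives the alternative $k < 0$ or $k \equiv 0$. In the latter case, by the statement of Lemma \ref{KEYLEMMA}, $X$ is parabolic, matching the extra information stated in the surrounding theorem in the introduction (namely that $X$ is then conformal to $\mathbb{C}$).

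Finally, the equivalent formulation in terms of $|\beta|_h^2$ is just the Gauss equation $k = -1 + |\beta|_h^2$ of Proposition \ref{CurvatureBound} transported across the dichotomy: $k < 0$ translates to $|\beta|_h^2 < 1$ and $k \equiv 0$ translates to $|\beta|_h^2 \equiv 1$.

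There is no real obstacle in this argument; the key work has already been packaged into Proposition \ref{CurvatureBound} (which establishes the curvature lower bound and the quadratic differential inequality via the Hitchin equation applied to the holomorphic sections $s_1, s_2$) and Lemma \ref{KEYLEMMA} (which abstracts the Cheng-Yau plus strong maximum principle dichotomy). The only conceptual point worth flagging is that the $r$-differential style inequality $\triangle_g K_g \geq c K_g(K_g + d)$ is precisely what the Higgs bundle computation naturally produces for maximal surfaces, so the abstract lemma is tailored to this geometric setting.
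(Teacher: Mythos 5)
Your proposal is correct and follows exactly the paper's own (one-line) proof: apply Lemma \ref{KEYLEMMA} with $c=d=1$ to the inequality $\triangle_g k \geq k(k+1)$ from Proposition \ref{CurvatureBound}, using completeness and the lower bound $k\geq -1$, and then translate the dichotomy through the Gauss equation $k=-1+|\beta|_h^2$. Nothing is missing.
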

\begin{proof}
Applying Lemma \ref{KEYLEMMA} to Equation (\ref{CurvatureEquationReal}) in  Proposition \ref{CurvatureBound}, we obtain the statement.
\end{proof}

\begin{rem}\label{CompactnessContinuity}
Consider $\mathcal M(n)$ as the set of pairs $(x,X)$ where $X$ is a complete maximal surface in $\mathbb H^{2,n}$ and $x$ is a point on $X$. The natural action of $SO_0(2,n)$ on the space of $\mathcal M(n)$, equipped with the Hausdorff topology is compact, proved in \cite{labourie2020plateau}. Moreover, the curvature $k_X(x)$ depends continuously on the pair $(x,X)\in \mathcal M(n)$.
The original proof in \cite{LT} for Proposition \ref{CurvatureNegative} relies on the cocompactness.
\end{rem}

Next we recover the following theorem shown by Labourie-Toulisse in \cite{LT}.
\begin{thm}(Labourie-Toulisse
\cite{LT})\label{MaximalSurfacePart1}
For a complete maximal surface $X$ in $\mathbb H^{2,n}$, the followings are equivalent:\\
(a) the induced metric has curvature bounded above by a negative constant;\\
(b) the induced metric is conformally bounded with respect to the hyperbolic metric.
\end{thm}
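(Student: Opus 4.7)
The plan is to assemble this equivalence from the tools already developed in the paper, since both implications reduce to direct applications of the key lemmas in Section \ref{Tools}. The common ingredients are Proposition \ref{CurvatureBound} (which gives both $k\geq -1$ and the differential inequality $\triangle_g k\geq k(1+k)$) together with the completeness of the induced metric $g$, which justifies the use of the Cheng-Yau maximum principle underlying Lemmas \ref{KC} and \ref{KEYLEMMA}.

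First I would prove $(b)\Rightarrow(a)$. The metric $g$ on $X\cong\mathbb{D}$ is complete, its curvature $k$ is bounded below by $-1$, and by Proposition \ref{CurvatureBound} it satisfies $\triangle_g k\geq k(k+1)$, which matches the form $\triangle_g K_g\geq cK_g(K_g+d)$ required by Lemma \ref{KEYLEMMA} with $c=d=1$. Hypothesis (b) supplies the remaining ingredient $g\leq Cg_{\mathbb{D}}$. Applying Lemma \ref{KEYLEMMA} to $(X,g)$ then produces $\delta>0$ with $k\leq -\delta$, which is exactly (a).

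Next I would prove $(a)\Rightarrow(b)$ by establishing the upper and lower conformal bounds separately, each through one application of Lemma \ref{KC}. For the upper bound, I apply Lemma \ref{KC} with $g_0=g_{\mathbb{D}}$ (whose curvature $-1$ gives $a=1$) and $b=\delta$ from (a), producing $g\leq (1/\delta)\,g_{\mathbb{D}}$. For the lower bound, I swap the roles: take $g_0=g$, which is complete with $k\geq -1$ by Proposition \ref{CurvatureBound} (so $a=1$), and let the conformal metric appearing in the lemma be $g_{\mathbb{D}}$, which has $K_{g_{\mathbb{D}}}=-1\leq -1$ (so $b=1$); the lemma then yields $g_{\mathbb{D}}\leq g$. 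Combined, these two bounds are precisely the conformal boundedness (b).

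The bulk of the work has already been done in Proposition \ref{CurvatureBound}, where the Hitchin-equation computations yield $k=-1+|\beta|_h^2$ and the Bochner-type inequality $\triangle_g k\geq k(1+k)$, and in Lemmas \ref{KC} and \ref{KEYLEMMA}, which adapt Wan's technique via the mean-value inequality. Consequently I do not anticipate a substantial new obstacle in this proof. The one subtlety to watch is, when swapping the roles of $g$ and $g_{\mathbb{D}}$ in Lemma \ref{KC}, to confirm that both the completeness of $g$ and the curvature lower bound $k\geq -1$ are already guaranteed purely from the standing hypothesis that $X$ is a complete maximal surface, so that no circular use of (a) or (b) is introduced.
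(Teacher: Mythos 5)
Your proposal is correct and follows essentially the same route as the paper: the paper proves (b)$\Rightarrow$(a) by applying Lemma \ref{KEYLEMMA} to the inequality $\triangle_g k\geq k(1+k)$ from Proposition \ref{CurvatureBound}, and (a)$\Rightarrow$(b) by invoking Lemma \ref{KC} with the two-sided curvature bound $-1\leq k\leq-\delta$, which is exactly your two applications of Lemma \ref{KC} with the roles of $g$ and $g_{\mathbb D}$ swapped. Your version merely spells out more explicitly the two uses of Lemma \ref{KC} and the non-circularity check, both of which the paper leaves implicit.
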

\begin{proof}
For the ``only if" direction, applying Lemma \ref{KEYLEMMA} to Equation (\ref{CurvatureEquationReal}) in  Proposition \ref{CurvatureBound}, we have $k\leq -\delta$ for a positive constant $\delta.$

For the ``if" direction, since the induced metric has curvature satisfying $-1\leq k\leq -\delta$, then by Lemma \ref{KC}, $g_{\mathbb D}\leq g\leq Cg_{\mathbb D}$ for some constant $C=C(\delta)>0$.
\end{proof}
\begin{rem}\label{Discussion}
The above proof from (b) to (a) is the same as the one in \cite{LT}, while the proof from (a) to (b) is different from the one in \cite{LT}. Let's briefly explain the original proof from (b) to (a) in Labourie-Toulisse \cite{LT}. Their proof is by adding the following two equivalent conditions, which is their main goal:\\
(c) the induced metric is Gromov hyperbolic; \\
(d) $\Sigma$ is quasiperiodic, that is, the orbit closure $\overline{SO_0(2,n+1)\cdot (x,X)}$ in $\mathcal M(n)$ does not contain the Barbot surface, in which case the induced metric is flat.\\
The proof of (b)$\Rightarrow$ (a) is by showing (b)$\Rightarrow$(c)$\Rightarrow$(d)$\Rightarrow$(a).


(b) $\Rightarrow$ (c) follows from that quasi-isometry preserves Gromov hyperbolicity.

(c) $\Rightarrow$ (d) follows from the fact that the limit of a sequence of Gromov $\delta$-hyperbolic spaces is still Gromov $\delta$-hyperbolic.

(d) $\Rightarrow$ (a) uses the compactness as in Remark \ref{CompactnessContinuity}.

Note our proof here for Theorem \ref{MaximalSurfacePart1} bypasses the cocompactness , but directly from the equation itself.
\end{rem}

For a maximal surface $X$ in $\mathbb H^{2,n}$, we associate a holomorphic quartic differential $q_4=g_N(\beta, \beta)$. Equivalently, we can define $q_4=\frac{\tr(\phi^4)}{4}$ for the corresponding Higgs bundle $(E,\phi)$.
\begin{thm}\label{EquivalenceDetail}
For a complete maximal surface $X$ in $\mathbb H^{2,n}$, suppose $X$ is conformal to $\mathbb D$. The followings are equivalent:\\
(i) the quartic differential is bounded with respect to the hyperbolic metric;\\
(ii) the induced metric has curvature bounded above by a negative constant;\\
(iii) the induced metric is  conformally bounded with respect to the hyperbolic metric.
\end{thm}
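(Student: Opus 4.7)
Since Theorem~\ref{MaximalSurfacePart1} already provides (ii)$\Leftrightarrow$(iii), it suffices to establish (i)$\Leftrightarrow$(iii). For the easy direction (iii)$\Rightarrow$(i), I would use tools already in the paper: the Cauchy--Schwarz inequality $|\gamma^{\dagger}\gamma|^2\le|\gamma|_h^4$ used in the derivation of (\ref{HitchinEquation2}) translates intrinsically into a bound $|q_4|_g\le c|\beta|_h^4$ for a universal constant $c>0$. By Proposition~\ref{CurvatureNegative}, $|\beta|_h^2<1$ on a complete maximal surface conformal to $\mathbb D$, so $|q_4|_g$ is uniformly bounded, and the conformal rescaling relation $|q_4|_{g_{\mathbb D}}=(\tilde g/\tilde g_{\mathbb D})^2|q_4|_g$ combined with (iii) yields (i).

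For the hard direction (i)$\Rightarrow$(iii), I plan to establish the upper and lower bounds of the conformal factor $\tilde g/\tilde g_{\mathbb D}$ separately. For the upper bound, I would invoke the general result of Li--Mochizuki \cite{LiMochizuki} behind Theorem~\ref{LM2}: for any Higgs bundle on $\mathbb D$ with harmonic metric, the spectrum of $\phi$ being bounded with respect to $g_{\mathbb D}$ is equivalent to the energy density $|\phi|^2_{h,g_{\mathbb D}}$ being bounded. Applied to the $SL(n+3,\mathbb C)$-Higgs bundle (\ref{Higgsbundle}) coming from the maximal surface, a direct computation on the block matrix form of $\phi$ shows that $\tr(\phi^j)=0$ for $j\not\equiv0\pmod 4$ and $\tr(\phi^{4m})=4q_4^m$, so the entire spectrum of $\phi$ is encoded by $q_4$. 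Thus (i) forces $|\phi|^2_{h,g_{\mathbb D}}$ to be bounded. Reading off from the matrix of $f$, one computes $\tr(\phi\phi^{*_h})=2(|\gamma|_h^2+|1|_h^2)=\tilde g(1+|\beta|_h^2)$, so the bounded energy density translates into $\tilde g(1+|\beta|_h^2)\le C\tilde g_{\mathbb D}$; since $|\beta|_h^2\ge0$ the upper bound $\tilde g\le C\tilde g_{\mathbb D}$ follows.

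For the lower bound I would argue it holds a priori, without invoking (i). Set $\psi=\log(\tilde g/\tilde g_{\mathbb D})$. Using $\triangle_g\log\tilde g=-k/2$ and, since $K_{g_{\mathbb D}}=-1$, $\triangle_g\log\tilde g_{\mathbb D}=\tilde g_{\mathbb D}/(2\tilde g)=e^{-\psi}/2$, one obtains $\triangle_g\psi=-k/2-e^{-\psi}/2$. Setting $v=-\psi$ and using the a priori lower bound $k\ge-1$ from Proposition~\ref{CurvatureBound}, this becomes $\triangle_g v\ge(e^v-1)/2$. Since $g$ is complete with Gaussian curvature bounded below by $-1$, Lemma~\ref{Cheng-Yau} applied with the exponentially growing $f(t)=(e^t-1)/2$ forces $\sup v\le 0$, hence $\psi\ge 0$ and $\tilde g\ge\tilde g_{\mathbb D}$. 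Combined with the upper bound this establishes (iii). The main technical obstacle will be the identification of the spectrum of $\phi$ solely in terms of $q_4$ for the non-cyclic Higgs bundle (\ref{Higgsbundle}), together with verifying that the general Li--Mochizuki theorem applies in this setting; the bookkeeping among the various norms $|\cdot|_h,|\cdot|_g,|\cdot|_{g_{\mathbb D}}$ is routine but needs care.
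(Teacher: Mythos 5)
Your proposal is correct and follows essentially the same route as the paper: (ii)$\Leftrightarrow$(iii) via Theorem~\ref{MaximalSurfacePart1}, (iii)$\Rightarrow$(i) via the Cauchy--Schwarz bound $|q_4|_g\le|\beta|_h^2$ (note your exponent should be $2$, not $4$, unless you square both sides) together with Proposition~\ref{CurvatureNegative}, and (i)$\Rightarrow$(iii) via Li--Mochizuki's Proposition 3.12 for the upper bound and the a priori bound $k\ge-1$ for the lower bound. The only cosmetic difference is that you re-derive the lower bound $\tilde g\ge\tilde g_{\mathbb D}$ directly from the Cheng--Yau maximum principle, whereas the paper cites its Lemma~\ref{KC}, which encapsulates exactly that computation.
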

\begin{proof}
By Theorem \ref{MaximalSurfacePart1}, (ii) and (iii) are equivalent.

From (iii) to (i), we have $$|q_4|_{g}=|q_4|_{h}=|\beta^{\dagger}\beta|_h\leq |\beta|_{h}|\beta^{\dagger}|_h=|\beta|^2_h.$$ Then from Equation (\ref{Gauss equation}) and Proposition \ref{CurvatureNegative}, we obtain $|\beta|^2_h\leq 1$. From (iii), $g$ is conformally bounded by the hyperbolic metric, so $q_4$ is also bounded with respect to the hyperbolic metric.

From (i) to (iii), suppose the quartic differential $q_4=\frac{\tr(\phi^4)}{4}$ is bounded with respect to the hyperbolic metric $g_0$. Then the spectrums of the Higgs field $\phi$ are bounded. From Proposition 3.12 in \cite{LiMochizuki}, $|\phi|_{h,g_0}\leq C$. Since the Hermitian metric $h$ on $E$ is diagonal, from (\ref{Higgsbundle}) we have $|\phi|_{h,g_0}^2=2|\beta|_{h,g_0}^2+2|1|_{h,g_0}^2$. So $|1|_{h,g_0}^2\leq C$. Notice that $|1|_{h,g_0}^2=\frac{|f|^2_h|dz|^2_{g_0}}{|dz|^2_g}$, where $f$ is position vector of $X$ in $\mb{H}^{2,n}$, so $|f|_h=1.$ Hence we obtain $g\leq Cg_{0}$ for some constant $C$. For the other direction, from Equation (\ref{Gauss equation}), $k\geq -1$. Then $g\geq g_0$ from Lemma \ref{KC}.
\end{proof}

\subsection{An analogue of the universal Teichm\"uller space}

In the work of F. Labourie and J. Toulisse, they introduce an analogue of the universal Teichm\"uller space $\mathcal{QS}_n$ and define a natural map from $\mathcal{QS}_n$ to the product of the universal Teichm\"uller space and the space of bounded quartic differentials. Let's briefly explain their construction in \cite{LT}.

Let $V$ be a $2$-dimensional real vector space. For any quadruple of pairwise distinct points $(x,y,z,w)$ in $\mathbb P(V)$, denote by $[x,y,z,w]$ its cross ratio. Recall that a homeomorphism $\phi$ of $\mathbb P(V)$ is quasisymmetric if there exist constants $A$ and $B$ greater than $1$ such that for any quadruple of pairwise distinct points in $\mathbb P(V)$, we have if $A^{-1}\leq |[x,y,z,t]|\leq A$, then $B^{-1}\leq |[\phi(x), \phi(y), \phi(z),\phi(t)]|\leq B$.  Let $QS_0$ be the group of quasisymmetric homeomorphisms of $\mathbb P(V)$. The group $PSL(V)$ acts on $QS_0$ by post-composition and the quotient is defined to be the universal Teichm\"uller space, denoted by $\mathcal T(\mathbb H^2)$.

The Einstein universe is the quadric associated to $q$:
\[\partial_{\infty}\mathbb H^{2,n}:=\{x\in \mathbb P(E), q(x)=0\}. \]
The group $SO_0(2,n+1)$ acts transitively on $\partial_{\infty}\mathbb H^{2,n}$. There is a natural generalization of the definition of cross ratio on $\partial_{\infty}\mathbb H^{2,n}$. As a generalization, for a map $\xi$ from $\mathbb P(V)$ to $\partial_{\infty}\mathbb H^{2,n}$ and a pair of constants $A, B$ greater than $1$, the map $\xi$ is $(A,B)$-quasisymmetric if it is positive and for all quadruples $(x,y,z,t)\in\mathbb P(V)^4$, we have if $A^{-1}\leq |[x,y,z,t]|\leq A$, then $B^{-1}\leq |[\xi(x), \xi(y), \xi(z), \xi(w)]|\leq B$.


Let ${QS}_n$ be the space of quasisymmetric maps from $\mathbb P(V)$ to $\partial_{\infty}\mathbb H^{2,n}$ equipped with the $C^0$ topology. The quotient $\mathcal{QS}_n:=QS_n/SO_0(2,n+1)$ is a Hausdorff topological space. Note that $\mathcal{QS}_0$ is $\mathcal T(\mathbb H^2)$. The space $\mathcal{QS}_n$ should be viewed as an analogue of the universal Teichm\"uller space.

By \cite[Theorem B]{LT}, a maximal surface $\Sigma$ is quasiperiodic if and only if the boundary map is quasisymmetric.
By \cite[Theorem 7.1]{LT}, for any element $\xi\in \mathcal{QS}_n$, there is a unique reparametrization $\xi_{repar}\in \mathcal{QS}_n$ of the image of $\xi$. Moreover, suppose the image of $\xi$ bounds a maximal surface whose induced curvature is bounded above by $-c$, then $\xi_{repar}$ is $(A, B)$-quasisymmetric, for a pair constant $(A,B)$ only depending on $c$. The space $\mathcal T(\mathbb H^2)$ acts on $\mathcal{QS}_n$ by post-composition. They define a continuous map $\pi_{\mathbb H^2}:\mathcal {QS}_n\rightarrow \mathcal T(\mathbb H^2)$ by setting $\pi_{\mathbb H^2}(\xi)=\phi$, where $\phi$ is such that $\xi\circ\phi=\xi_{repar}$.

When $\Sigma$ is quasiperiodic, by Theorem 6.3 in \cite{LT}, the uniformization gives a biLipschitz map between $\mathbb H^2$ and $\Sigma$, so $q_4$ is bounded with respect to the hyperbolic metric. Denote by $H^0_b(\mathbb H^2, K^4)$ the vector space of holomorphic quartic differentials on $\mathbb H^2$ that are bounded with respect to the hyperbolic metric.

 In sum, we obtain a map
\begin{eqnarray*}
\mathcal H_{\mathbb H^2}: \mathcal {QS}_n&\rightarrow &\mathcal T(\mathbb H^2)\times H_b^0(\mathbb H^2, K^4)\\
\xi&\mapsto& (\pi_{\mathbb H^2}(\xi), q(\xi)).
\end{eqnarray*}

As a question by F. Labourie and J. Toulisse, we are going to show the map is proper.

\begin{thm}
The map $\mathcal{H}_{\mathbb H^2}$ is proper.
\end{thm}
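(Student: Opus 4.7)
The plan is as follows. Given a sequence $\{\xi_n\}\subset \mathcal{QS}_n$ whose images $\mathcal{H}_{\mathbb H^2}(\xi_n)=(\phi_n, q_n)$ converge in $\mathcal T(\mathbb H^2)\times H_b^0(\mathbb H^2, K^4)$ to $(\phi_\infty, q_\infty)$, I aim to extract a subsequence of $\xi_n$ converging in $\mathcal{QS}_n$. The strategy is to convert uniform boundedness of the quartic differentials into uniform quasisymmetric bounds on the reparametrizations, and then apply Arzel\`a--Ascoli.

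The first step is to use that convergence of $q_n$ in $H_b^0$ forces $\sup_n \|q_n\|_{L^\infty(g_{\mathbb H^2})}<\infty$. Let $X_n$ denote the complete maximal surface in $\mathbb H^{2,n}$ bounded by $\xi_n$, uniformized so that $X_n$ is conformal to $\mathbb H^2$. Theorem \ref{EquivalenceDetail} then supplies a constant $\delta>0$, independent of $n$, such that the induced curvature of $X_n$ is bounded above by $-\delta$. The key point here is that the implication $(\mathrm{i})\Rightarrow(\mathrm{ii})$ in Theorem \ref{EquivalenceDetail} is effective: tracing through Lemma \ref{KEYLEMMA} and Lemma \ref{Wan}, the value of $\delta$ depends only on $\|q\|_{L^\infty}$ and not on any finer data about $q$.

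Next, I would invoke \cite[Theorem 7.1]{LT}, which states that under a uniform upper curvature bound $-\delta$ on $X_n$, the reparametrizations $\xi_{n,repar}=\xi_n\circ\phi_n$ are uniformly $(A,B)$-quasisymmetric for constants $(A,B)$ depending only on $\delta$. Choosing representatives normalized by fixing three reference points (using the $SO_0(2,n+1)$-action in the definition of $\mathcal{QS}_n$), the family $\{\xi_{n,repar}\}$ becomes equicontinuous and pointwise bounded on $\mathbb P(V)$. Arzel\`a--Ascoli then produces a subsequence $\xi_{n_k,repar}\to\xi_\infty^{repar}$ in the $C^0$ topology, with $\xi_\infty^{repar}$ itself $(A,B)$-quasisymmetric, hence in $QS_n$.

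Finally, since $\phi_n\to\phi_\infty$ in the universal Teichm\"uller space, after matching normalizations the representatives converge in $C^0$, and so do their inverses. Writing $\xi_n=\xi_{n,repar}\circ \phi_n^{-1}$ and passing to the subsequence gives $\xi_{n_k}\to \xi_\infty^{repar}\circ\phi_\infty^{-1}$ in $C^0$, hence in $\mathcal{QS}_n$ after descending to the quotient by $SO_0(2,n+1)$. The main obstacle I foresee is the quantitative bookkeeping in Step 1: making rigorous that the constant $\delta$ in Theorem \ref{EquivalenceDetail} depends only on $\|q\|_{L^\infty}$, which requires unwinding the constants $C_1, H, C_2, c$ in Lemma \ref{Wan} and verifying they all arise from the $L^\infty$-norm via the intermediate equation $\triangle_{\widetilde g}(-K_{\widetilde g})\leq cK_{\widetilde g}$. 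Once this effective dependence is established, the remaining compactness arguments are standard.
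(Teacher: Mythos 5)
Your proposal is correct and follows essentially the same route as the paper: a uniform bound on $|q|_{g_{\mathbb D}}$ yields a uniform negative upper bound on the induced curvature via Theorem \ref{EquivalenceDetail} (the effective dependence of the constant on $M=\sup|q|_{g_{\mathbb D}}$ that you flag is exactly how the paper uses it), then \cite[Theorem 7.1]{LT} gives uniform $(A',B')$-quasisymmetry of $\xi_{repar}$, and composing with $\phi^{-1}$ controls $\xi$. The only cosmetic difference is the final compactness step: where you normalize by three points and run Arzel\`a--Ascoli by hand, the paper simply cites the cocompactness of the $PSL(V)\times SO_0(2,n+1)$-action on the space of $(A,B)$-quasisymmetric maps (\cite[Theorem 3.12]{LT}), which packages the same equicontinuity argument.
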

\begin{proof}
Suppose $(\phi, q)\in\mathcal T(\mathbb H^2)\times H_b^0(\mathbb H^2, K^4)$ satisfies that $\phi$ is $(A, B)$-quasisymmetric and $|q|_{g_{\mathbb D}}\leq M$. It is enough to show $\xi\in\mathcal H_{\mathbb H^2}^{-1}(\phi,q)$ is $(A'', B'')$-quasisymmetric, where $(A'',B'')$ only depends on $A, B, M$. The properness then follows from the action of $PSL(V)\times SO_0(2,n+1)$ on the space of $(A,B)$-quasisymmetric maps is cocompact, proved in Theorem 3.12 in \cite{LT}..

From $|q|_{g_{\mathbb D}}\leq M$, then the induced curvature on $\Sigma$ is bounded from above by a constant $-c$ for $c=c(M)$ from Theorem \ref{EquivalenceDetail}. Thus $\Sigma$ is quasiperiodic by Theorem B in \cite{LT}, see the discussions in Remark \ref{Discussion}.
By Theorem 7.1 in \cite{LT}, since $(x,\Sigma)$ is a quasiperiodic surface whose curvature is bounded by $-c$, then $\xi_{repar}$ is $(A', B')$-quasisymmetric, where $(A',B')=(A',B')(c)>1$.

Since $\phi$ is $(A,B)$-quasisymmetric, then $\phi^{-1}$ is also $(A, B)$-quasisymmetric. Then we have $\xi=\xi_{repar}\circ \phi^{-1}$ is of $(A'',B'')$-quasisymmetric, where $(A'',B'')$ only depends on $A, B, M$. Therefore we finish the proof of properness of $\mathcal H_{\mathbb H^2}$.
\end{proof}

\section{$J$-holomorphic curve in $\mathbb H^{4,2}$ and bounded sextic differentials}\label{G2}
In this section, we discuss the relation between bounded sextic differentials with $J$-holomorphic curves in $\mathbb H^{4,2}.$

We consider the split octonians $\mathbb O'$, whose automorphism group $G_2'=Aut(\mathbb O')\subset SO_0(3,4)$ is the split real $G_2'$. The imaginary split octonians $Im\mathbb O'$ endowed with a natural quadratic form $q$ identifies with $\mathbb R^{3,4}.$ On $\mathbb R^{3,4},$ there is a cross product given by $x\times y:=Im(xy).$ It induces an almost complex structure $J$ on the pseudosphere $$S^{2,4}=\{x\in\mathbb R^{3,4}|q(x,x)=1\}$$ by $$J(X):=x\times X,\quad \text{for all~} x\in S^{2,4}, X\in T_xS^{2,4}\cong x^{\perp}.$$
Consider $(S^{2,4},J)$ with metric multiplies by $-1$, it coincides with the pseudo-hyperbolic space $\mathbb H^{4,2},$
where $$\mathbb H^{4,2}=\{x\in \mathbb R^{4,3}|q(x,x)=-1\}.$$

In an almost complex manifold $(M, J)$, a $J$-holomorphic curve is an immersed surface $\Sigma$ whose tangent bundle $T\Sigma\subset TM$ is preserved by $J$.

Baraglia in \cite{g2Geometry} discoverd that a subcyclic rank $7$ Higgs bundle in the Hitchin section over a domain $\Omega\subset\mathbb C$ together with a harmonic metric $h=diag(h_1,h_2,h_3,1,h_3^{-1}, h_2^{-1},h_1^{-1})$ satisfying $h_1=2h_2h_3$ gives rise to a $J$-holomorphic (also called almost-complex) curve $\nu:\Omega\rightarrow \hat S^{2,4}$. The explicit relation between the resulting surface and Higgs bundles has been further developed in Evans \cite{Parker} using harmonic sequence.

For a $J$-holomorphic curve, the osculation line is defined to be the $J$-complex line in a normal space formed by the images of second fundamental form. Nie in  \cite{NieXin} showed  that subcyclic Higgs bundles in the Hitchin section together with a real harmonic metric $diag(h_1, h_2, h_3, 1, h_3^{-1}, h_2^{-1}, h_3^{-1})$ satisfying $h_1=2h_2h_3$ are characterized as space-like $J$-holomorphic curves in $\mathbb H^{4,2}$ with nowhere vanishing second fundamental form and timelike osculation lines. One can retrieve the holomorphic sextic differential $q$ from the data of the structure equation of the immersion.

The induced Hermitian metric on the $J$-holomorphic curve is
$h=|1|^2_hdz\otimes d\bar z=h_3^{-1}dz\otimes d\bar z$, and the induced Riemannian metric is $g = h +\bar h=2h_3^{-1}(dx^2+dy^2)$(see \cite[Section 3.1]{Parker}).
 \begin{df}
 We call a space-like $J$-holomorphic curve in $\mathbb H^{4,2}$ complete if its induced metric is complete.
 \end{df}
 \begin{rem} (1) It is not clear if the completeness condition implies the surface is a proper embedding or entire.
\\
(2) The complete metric does not necessarily come form a complete real solution for the variant Toda system in Theorem \ref{varianttoda1}.\\
(3) Also, a complete real solution in Theorem \ref{varianttoda1} for the variant Toda system is not obviously satisfying $h_1=2h_2h_3.$ We believe this is true. By the uniqueness, it is true for compact surface case. But for noncompact surfaces, one needs to show it.
 \end{rem}

\begin{lem}\label{EquationG2}
Let $\Sigma$ be a space-like $J$-holomorphic curve in $\mathbb H^{4,2}$ with nowhere vanishing second fundamental form and timelike osculation line. Let $g, k$ be the induced metric and its curvature. Then $k\geq -1.$
If $\Sigma$ is complete, then
\begin{equation}
 \triangle_g k\geq 3k(k+1).
\end{equation}
\end{lem}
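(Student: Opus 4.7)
The plan is to translate the geometric data into the Higgs-bundle framework via the Baraglia--Nie correspondence and then run a Bochner-type computation on the resulting variant Toda system, exploiting the $G_2'$-constraint $h_1=2h_2h_3$. Under the hypotheses, the $J$-holomorphic curve corresponds to a real solution $(w_1,\dots,w_7)$ of the variant Toda system (\ref{vtoda}) with $r=7$, together with the auxiliary relation $w_1=w_2+w_3+\log 2$. Setting $f_i=e^{-w_i+w_{i+1}}$ and $f_0=e^{w_1+w_2}|q|^2$, this constraint becomes the explicit identity $f_1=f_3/2$. Taking background $g_0=|dz|^2$ so $K_{g_0}=0$ and using $w_4=0$ from the real structure, the induced metric reads $g=2h_3^{-1}|dz|^2=2f_3|dz|^2$, i.e.\ $\tilde g=2f_3$ and $\triangle_g=\frac{1}{2f_3}\partial_z\partial_{\bar z}$ in the paper's convention.

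For the first claim $k\ge -1$, I would compute $k=K_g$ directly via the paper's Gaussian curvature formula applied to $\tilde g=2f_3$. The tip Toda equation $\triangle_{g_0}w_3=f_2-f_3$ (the case $i=n=3$ of (\ref{vtoda}) with $w_4=0$ and $K_{g_0}=0$) gives immediately
\[
k=\frac{f_2}{f_3}-1,
\]
which is strictly $>-1$ since $f_2,f_3>0$.

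For the PDE inequality under completeness, I would apply the weak Bochner inequality $\partial_z\partial_{\bar z}u\ge u\,\partial_z\partial_{\bar z}\log u$ to $u=f_2/f_3=k+1$. Combining the Toda equations for $w_2$ and $w_3$ produces
\[
\partial_z\partial_{\bar z}\log(f_2/f_3)=3f_2-2f_3-f_0-f_1.
\]
Theorem \ref{varianttoda1} supplies the sharp completeness bounds $f_0<f_1$ and $f_2<f_3$; combined with the $G_2'$-identity $f_1=f_3/2$ these yield $f_0+f_1<f_3<4f_3-3f_2$, hence $\partial_z\partial_{\bar z}\log(f_2/f_3)>6(f_2-f_3)=6f_3k$. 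Feeding this into the weak Bochner gives $\partial_z\partial_{\bar z}(f_2/f_3)\ge (f_2/f_3)\cdot 6f_3k=6f_2 k$, and dividing by $\tilde g=2f_3$ produces $\triangle_g k>3k(k+1)$.

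The main obstacle is isolating the right chain of completeness estimates to deliver the coefficient $3$ rather than $\tfrac{3}{2}$ or $1$. Using only the crude completeness bound $f_0+f_1<f_2$ from Theorem \ref{varianttoda1} produces merely $\triangle_g k\ge k(k+1)$; using the intermediate $f_0+f_1<f_3$ (obtained via $f_1=f_3/2$ and $f_0<f_1$) improves this only to $\triangle_g k\ge\tfrac{3}{2}k(k+1)$. The decisive additional input is the strict Toda inequality $f_2<f_3$ of Theorem \ref{varianttoda1}, which upgrades the bound to $f_0+f_1<4f_3-3f_2$ and thus to the stated coefficient $3$. Once the three ingredients---the $G_2'$-identity $f_1=f_3/2$ and the Toda-completeness estimates $f_0<f_1$ and $f_2<f_3$---are combined, the Bochner chain delivers $\triangle_g k\ge 3k(k+1)$ directly.
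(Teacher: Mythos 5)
Your overall strategy is the same as the paper's: pass to the $r=7$ subcyclic Higgs bundle with the $G_2'$ constraint $h_1=2h_2h_3$ (equivalently $f_1=f_3/2$), read off $k=f_2/f_3-1>-1$ from the tip equation of the variant Toda system, and run the weak Bochner inequality on $u=f_2/f_3=k+1$. The computation of $k$ and the identity $\partial_z\partial_{\bar z}\log(f_2/f_3)=3f_2-2f_3-f_0-f_1$ are correct. The gap is in your appeal to Theorem \ref{varianttoda1} for the estimates $f_0<f_1$ and $f_2<f_3$. That theorem concerns the unique solution that is complete in the strong sense that \emph{every} $g(h)_i=f_ig_0$ is complete; the hypothesis here only gives completeness of the induced metric $g=2f_3|dz|^2$, which (via $f_1=f_3/2$ and the symmetry $f_i=f_{r-i}$) yields completeness of $g(h)_1,g(h)_3,g(h)_4,g(h)_6$ but says nothing about $g(h)_2$ or $g(h)_5$. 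The paper flags exactly this in the Remark preceding the lemma: the complete induced metric does not necessarily come from a complete real solution in the sense of Theorem \ref{varianttoda1}. The bound $f_0\le f_1$ survives because Lemma \ref{easycase} needs only $g(h)_1=\tfrac14 g$ to be complete --- this is what the paper actually invokes, and you should cite that instead. The bound $f_2<f_3$ does not survive: it is literally the statement $k<0$, which is the conclusion this lemma is designed to feed into Lemma \ref{KEYLEMMA} to prove, so using it as an input here is circular as well as unjustified at this stage.

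Dropping $f_2<f_3$, your own chain (as you note) gives $\triangle_gk\ge\tfrac32k(k+1)$ rather than $3k(k+1)$. That is not a real loss: Lemma \ref{KEYLEMMA} only requires an inequality $\triangle_gk\ge cK_g(K_g+d)$ for \emph{some} $c,d>0$, so every subsequent application goes through with the constant $\tfrac32$. (In fact the paper's own displayed computation contains a factor-of-$2$ slip in the line $\tfrac12\triangle_{g_0}k=\triangle_{g_0}f_2$, so what it actually establishes is also $\tfrac32k(k+1)$.) If you insist on the constant $3$, the honest route is a two-step bootstrap: first prove the $\tfrac32$-inequality, apply the Cheng--Yau maximum principle to conclude $k\le0$, i.e.\ $f_2\le f_3$, and only then rerun your arithmetic with $f_0+f_1<f_3\le 4f_3-3f_2$ to upgrade the constant.
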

\begin{proof}
The surface data corresponds to the Higgs bundle $s(0,\cdots, 0,q_6,0)$ together with a diagonal harmonic metric $$diag(h_1,h_2,h_3,1,h_3^{-1},h_2^{-1},h_1^{-1})$$ satisfying $h_1=2h_2h_3.$
Let $g_0=h_3^{-1}(dz\otimes d\bar z)$.
Since $g=2Re(g_0)$, the curvature $k=\frac{1}{2}K_{g_0}$ and $\triangle g=\frac{1}{2}\triangle_{g_0}.$ By assumption, $g$ is complete. We use the notion in Section \ref{EquationSystem}. Let  $h_i=e^{w_i}g_0^{i-4}$. Set $f_0=e^{w_1+w_2}|q|_{h_3^{-1}}^2, f_1=e^{-w_1+w_2}, f_2=e^{-w_2+w_3}, f_3=e^{-w_3}.$ The associated variant Toda system is
\begin{eqnarray*}
&&\triangle_{g_0}\log f_0=2f_0-f_2+k\\
&&\triangle_{g_0}\log f_1=2f_1-f_2+k\\
&&\triangle_{g_0}\log f_2=2f_2-f_0-3f_1+k
\end{eqnarray*}
From the condition $h_1=2h_2h_3$, we have $f_3=2f_1=1$.

So $k=f_2-1\geq -1.$
And
\begin{eqnarray*}
\triangle_g k&=&\frac{1}{2}\triangle_{g_0}k=\triangle_{g_0} f_2\geq f_2\cdot\triangle_{g_0}\log f_2\\
&=&f_2(2f_2-f_0-\frac{3}{2}+k)\\
&&\text{using $f_0\leq f_1=\frac{1}{2}$ from Lemma \ref{easycase} and the completeness of $g$}\\
&\geq&f_2(2f_2-2+k)=3k(k+1).
\end{eqnarray*}
\end{proof}

Similar to the case of maximal surfaces in $\mathbb H^{2,4},$ we show the following result.
\begin{thm}
For a complete space-like $J$-holomorphic curve $\Sigma$ in $\mathbb H^{4,2}$ with nowhere vanishing second fundamental form and timelike osculation line, its induced curvature is either strictly negative or constantly zero.

Let $q$ be its associated holomorphic sextic differential. Suppose $\Sigma$ is conformal to $\mathbb D$. The following statements are equivalent.
\begin{enumerate}
\item $q$ is bounded with respect to the hyperbolic metric;
\item the induced metric on $\Sigma$ is conformally bounded with respect to the hyperbolic metric;
\item the induced curvature on $\Sigma$ is bounded from above by a negative constant.
\end{enumerate}
\end{thm}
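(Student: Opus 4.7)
For the first assertion, Lemma~\ref{EquationG2} provides the lower bound $k\geq -1$ and the differential inequality $\triangle_g k\geq 3k(k+1)$ on the complete surface $(\Sigma,g)$. The first part of Lemma~\ref{KEYLEMMA} (applied with $c=3$, $d=1$) then yields either $k<0$ everywhere or $k\equiv 0$, in which latter case $\Sigma$ is parabolic, hence conformal to $\mathbb{C}$.

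For the equivalence, assume $\Sigma$ is conformal to $\mathbb{D}$. The plan is to identify the induced Riemannian metric with one of the auxiliary metrics $g(h)_i=e^{-w_i+w_{i+1}}g_{\mathbb{D}}$ of the variant Toda system~(\ref{vtoda}) for $r=7$. Writing $h_i=e^{w_i}g_{\mathbb{D}}^{-(i-4)}$ on the $K^{4-i}$-summand and using that the reality of the harmonic metric forces $w_4=0$, a direct local computation gives $h_3^{-1}=\tilde g_{\mathbb{D}}\,e^{-w_3}$ as a local scalar, hence
\begin{equation*}
g=2h_3^{-1}(dx^2+dy^2)=2e^{-w_3}g_{\mathbb{D}}=2\,g(h)_3.
\end{equation*}
Consequently, completeness of $g$ is equivalent to completeness of $g(h)_n$ with $n=3$, and $g\leq Cg_{\mathbb{D}}$ is equivalent to $g(h)_3\leq C'g_{\mathbb{D}}$.

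With this identification in place, $(1)\Leftrightarrow (2)$ is a direct application of Proposition~\ref{UpperBound} to the variant Toda system~(\ref{vtoda}) with $r=7$ and $i_0=n=3$: since $g(h)_3$ is complete, $q$ is bounded with respect to $g_{\mathbb{D}}$ if and only if $g(h)_3\leq Cg_{\mathbb{D}}$, which is exactly condition~(2).

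For $(2)\Rightarrow(3)$, I invoke the second part of Lemma~\ref{KEYLEMMA}: the inequality $\triangle_g k\geq 3k(k+1)$ together with the conformal upper bound $g\leq Cg_{\mathbb{D}}$ on the complete surface $(\Sigma,g)$ forces $k\leq -\delta$ for some $\delta>0$. The reverse implication $(3)\Rightarrow(2)$ is Lemma~\ref{KC} applied with $g_0=g_{\mathbb{D}}$ (complete, $K_{g_0}=-1\geq -1$) and $g=e^u g_{\mathbb{D}}$ with $K_g\leq -\delta$: the lemma yields $g\leq \delta^{-1}g_{\mathbb{D}}$. The main delicate point will be the local identification $g=2\,g(h)_3$ carried out in the second paragraph, which rests on the constraint $h_1=2h_2h_3$ coming from the $G_2'$-geometry together with the reality $w_i+w_{8-i}=0$; once this normalization is settled, the remaining arguments are direct applications of the tools developed in Sections~\ref{Tools} and~\ref{General_r}.
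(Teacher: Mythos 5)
Your proposal is correct and follows essentially the same route as the paper: the rigidity statement via Lemma~\ref{EquationG2} and the first part of Lemma~\ref{KEYLEMMA}, the equivalence $(1)\Leftrightarrow(2)$ via Proposition~\ref{UpperBound} applied to the variant Toda system with $g(h)_3$ complete (your identification $g=2\,g(h)_3$ is exactly the normalization the paper uses, though note it needs only $h_4=1$, not the constraint $h_1=2h_2h_3$, which enters instead in deriving the differential inequality for $k$), and $(2)\Rightarrow(3)\Rightarrow(2)$ via Lemma~\ref{KEYLEMMA} and Lemma~\ref{KC}. The one small omission: ``conformally bounded'' in (2) is a two-sided bound, and Proposition~\ref{UpperBound} only supplies the upper bound $g\leq Cg_{\mathbb D}$; the lower bound $g\geq g_{\mathbb D}$ follows from $k\geq -1$, completeness of $g$, and Lemma~\ref{KC} (which is why the paper cites Lemma~\ref{KC} already in the $(1)\Leftrightarrow(2)$ step), so you should add that line.
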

\begin{proof}
 The first statement follows from applying Lemma \ref{KEYLEMMA} to the equation of curvature  $k$ and $k\geq -1$ in Lemma \ref{EquationG2}.

Next we show the equivalence. The equivalence between (1) and (2) follows from Theorem \ref{UpperBound} and Lemma \ref{KC}.

From (2) to (3), applying Lemma \ref{KEYLEMMA} to the equation of $k$ in  Lemma \ref{EquationG2}, we have $k\leq -\delta$ for a positive constant $\delta.$

From (3) to (1), since the induced metric has curvature satisfying $-1\leq k\leq -\delta$, then by Lemma \ref{KC}, $g_{\mathbb D}\leq g\leq Cg_{\mathbb D}$ for some constant $C=C(\delta)>0$.
\end{proof}

%

%
%
%

\bibliographystyle{amsalpha}
\bibliography{bib}

\end{document}